\providecommand{\U}[1]{\protect\rule{.1in}{.1in}}
\newcommand{\R}{\mathbb{R}}
\newcommand{\mb}[1]{\mathbf{#1}}
\newtheorem{assumption}{Assumption}
\newtheorem{theorem}{Theorem}
\newtheorem{lemma}{Lemma}
\newtheorem{remark}{Remark}
\newtheorem{definition}{Definition}
\useunder{\uline}{\ul}{}
\newcommand{\multiline}[1]{  \begin{tabularx}{\dimexpr\linewidth-\ALG@thistlm}[t]{@{}X@{}}
#1
\end{tabularx}
}
\begin{document}
%
\title{Asymptotic Analysis for Greedy Initialization of Threshold-Based Distributed Optimization of Persistent Monitoring on Graphs}
%
%
%

\author{Shirantha Welikala,~\IEEEmembership{Student Member,~IEEE,}
        and Christos G. Cassandras,~\IEEEmembership{Fellow,~IEEE,}
\thanks{$^{\star}$Supported in
part by NSF under grants ECCS-1509084, DMS-1664644, CNS-1645681, by AFOSR under
grant FA9550-19-1-0158, by ARPA-E's NEXTCAR program under grant
DE-AR0000796 and by the MathWorks.}
\thanks{Authors are with the Division of Systems Engineering and Center for Information and Systems Engineering, Boston University, Brookline, MA 02446, \texttt{{\small \{shiran27,cgc\}@bu.edu}}.}
\thanks{Manuscript received July 22, 2019; revised July22, 2019.}}

%
%

\markboth{Submitted to Automatica}%
{Shell \MakeLowercase{\emph{et al.}}: Bare Demo of IEEEtran.cls for IEEE Journals}
%



\maketitle

\begin{abstract}
This paper considers the optimal multi-agent persistent monitoring problem defined for a team of agents on a set of nodes (targets) interconnected according to a fixed network topology. The aim is to control this team so as to minimize a measure of overall node state uncertainty evaluated over a finite time interval. A class of distributed threshold-based parametric controllers has been proposed in prior work to control agent dwell times at nodes and next-node destinations by enforcing thresholds on the respective node states. Under such a Threshold Control Policy (TCP), an on-line gradient technique was used to determine optimal threshold values. However, due to the non-convexity of the problem, this approach often leads to a poor local optima highly dependent on the initial thresholds used. To overcome this initialization challenge, we develop a computationally efficient off-line greedy technique based on the asymptotic analysis of the network system. This analysis is then used to generate a high-performing set of initial thresholds. Extensive numerical results show that such initial thresholds are almost immediately (locally) optimal or quickly lead to optimal values. In all cases, they perform significantly better than the locally optimal solutions known to date.
\end{abstract}

\begin{IEEEkeywords}
Multi-agent Systems, Hybrid Systems, Optimization, Trajectory Planning,
\end{IEEEkeywords}

%
\IEEEpeerreviewmaketitle

\section{Introduction}\label{Sec:Introduction}

\IEEEPARstart{P}{ersistent} monitoring of a dynamically changing environment using a cooperating fleet of mobile agents has many applications across different domains. For example, in smart cities \cite{Rezazadeh2019}, transportation systems \cite{Yamashita2003} and manufacturing plants \cite{Liaqat2019}, a team of agents can be used to monitor different regions for congestion, disruptions or any other dynamic events of interest. Further, in a smart grid \cite{Caprari2010,Fan2018,Menendez2017}, a team can be used to inspect power plants and transmission lines to maintain a reliable and safe power system. Additional applications include patrolling \cite{Huynh2010}, surveillance \cite{Aksaray2015,Maza2011}, data collecting \cite{Zhou2018,Smith2011,Khazaeni2018}, coverage \cite{Sun2020}, particle tracking \cite{Shen2011} and sensing \cite{Zhou2019,Trevathan2018,Lin2013}.

Monitoring problems in general can be classified based on the nature of the \emph{environment}, \emph{objective} and \emph{dynamics }involved. In particular, based on the nature of the environment, a monitoring problem may have a finite set of \textquotedblleft points of interest\textquotedblright \ \cite{Rezazadeh2019,Welikala2019P3} or lack thereof \cite{Lin2013,Maini2018} in the environment to be monitored. Based on the nature of the objective, different monitoring problems van be formulated to optimize event-counts \cite{Yu2015}, idle-times \cite{Hari2019}, error covariances \cite{Pinto2020} or visibility states \cite{Maini2018} related to the environment. Finally, based on the nature of the environment dynamics, a monitoring problem can be either deterministic \cite{Yu2016,Zhou2019,Song2014} or stochastic \cite{Rezazadeh2019,Lan2013}.

The persistent monitoring problem considered in this paper is focused on an $n$-Dimensional ($n$-D) environment containing a finite number of points of interest (henceforth called \textquotedblleft targets\textquotedblright). The objective of the agent team is to collect information from (i.e., sense) each target to reduce an \textquotedblleft uncertainty\textquotedblright\ metric associated with the target state. In particular, the dynamics of each target's uncertainty metric are deterministic and the global objective is to minimize an overall measure of target uncertainties via controlling the agent trajectories.

The work in \cite{Zhou2018} has addressed the problem of persistent monitoring in $1$-D environments by formulating it as an optimal control problem and reducing it to a parametric optimization problem. The corresponding optimal parameters have been determined by following a gradient descent process with the gradients evaluated on-line through Infinitesimal Perturbation Analysis (IPA) \cite{Cassandras2010b}.
In contrast to the $1$-D case, finding the solution to the problem of persistent monitoring in $2$-D environments is much more complicated \cite{Lin2013}. However, as a remedy, the works in \cite{Lin2013,Khazaeni2018,Pinto2020} propose to constrain agent trajectories to certain standard families of parametric trajectories (e.g., elliptical, Lissajous and Fourier) and then to use IPA to obtain the optimal agent trajectories within these families. Nevertheless, as pointed out in \cite{Zhou2019}, all these problems are non-convex, hence standard gradient-based methods often lead to poor local optima dependent on the initial conditions used.

To overcome the challenges mentioned above, \cite{Zhou2019} exploits the network structure of the monitoring system by adopting a graph topology to abstract targets and feasible inter-target agent trajectories as graph nodes and edges, respectively. Note that this graph abstraction has the added advantage of accounting for physical obstacles that might be present in the environment. In this Persistent Monitoring on Networks (PMN) paradigm, an agent trajectory is fully characterized by a \emph{sequence of targets} to be visited and the corresponding \emph{sequence of dwell times} to be spent at each visited target. Therefore, the controller that optimizes a given objective should yield such a (target, dwell time) sequence for all the agents. Clearly, this optimization problem is significantly more complicated than the NP-hard traveling salesman problem \cite{Bektas2006} which only involves finding an optimal sequence of nodes to visit.

As an alternative, \cite{Welikala2020P5} uses a Receding Horizon Control (RHC) technique that requires each agent to repeatedly solve a smaller optimization problem to determine its optimal trajectory using only the local information available to it. This on-line control approach has the advantage of being distributed and parameter-free; on the other hand, it cannot exploit any global information regarding the underlying network structure. In contrast, the parametric approach taken in \cite{Zhou2019} uses a class of controllers characterized by threshold parameters which can be optimized in an on-line distributed manner using gradient descent. However, due to the non-convexity of the associated objective function with respect to these thresholds, this gradient-based approach often converges to a poor local optimum that is highly dependent on the initial thresholds which in \cite{Zhou2019} are generated randomly. Nevertheless, the solutions obtained by this parametric controller may be drastically improved by an \emph{initial off-line step} in which the global information available regarding the underlying network structure is exploited to determine a set of high-performing thresholds used to initialize the IPA-based on-line gradient descent process. This process subsequently converges to an improved set of (still locally optimal) thresholds. Our contribution in this paper can be seen more broadly as a systematic approach to select effective initial conditions for gradient-based methods that solve non-convex optimization problems pertaining to a large class of dynamic multi-agent systems beyond persistent monitoring. In particular, this is accomplished by analyzing the asymptotic behavior of such systems and using the resulting optimal control policies to initialize a parametric class of controllers.

For the PMN systems considered in this paper, our contributions include: 
(\romannum{1}) The asymptotic analysis of single-agent PMN systems with the agent constrained to follow a periodic and non-overlapping sequence of targets (also called \textquotedblleft target-cycle\textquotedblright), 
(\romannum{2}) A graph partitioning process that enables the extension of this analysis to the deployment of multiple agents, and 
(\romannum{3}) A computationally efficient, off-line technique that constructs a high-performing set of thresholds for PMN problems. 
As shown through extensive simulation results, the initial thresholds provided by this initialization technique are often immediately optimal (still local). Thus, in such cases, an effective initialization eliminates the need for any subsequent gradient descent process.

The paper is organized as follows. Section \ref{Sec:ProblemFormulation} provides the problem formulation and reviews the threshold-based control policy (TCP) proposed in \cite{Zhou2019}. Section \ref{Sec:DenseGraphsSingleAgent} includes the asymptotic analysis and a candidate threshold initialization technique, assuming the underlying PMN problem is single-agent and the network is sufficiently dense. Next, Section \ref{Sec:SparseGraphsSingleAgent} generalizes the asymptotic analysis and the threshold initialization technique proposed in Section \ref{Sec:DenseGraphsSingleAgent} to any network (still assuming a single-agent PMN scenario). Subsequently, Section \ref{Sec:MultiAgents} further generalizes the proposed threshold initialization technique to multi-agent systems. A sufficient number of simulation examples are discussed in each section, and, they have been compared with the respective state of the art solution \cite{Zhou2019}. Finally, Section \ref{Sec:Conclusion} concludes the paper.

\section{Problem Formulation} \label{Sec:ProblemFormulation}
Consider a two-dimensional mission space with $M$ targets (nodes) labeled $\mathcal{T}=\{1,2,\ldots,M\}$ and $N$ agents labeled $\mathcal{A}=\{1,2,\ldots,N\}$. Each target $i\in\mathcal{T}$ is located at a fixed position $X_{i}\in\mathbb{R}^{2}$. Each agent $a\in\mathcal{A}$ is allowed to move in the mission space, therefore, its trajectory is denoted by $\{s_{a}(t)\in\mathbb{R}^{2},t\geq0\}$. Target locations and initial agent locations are prespecified.

\paragraph*{\textbf{Target Uncertainty Model}} 
We follow the same model used in \cite{Zhou2019}. 
Each target $i\in\mathcal{T}$ has an associated \emph{uncertainty state} $R_{i}(t)\in\mathbb{R}$ with the following properties: 
(\romannum{1}) $R_{i}(t)$ increases at a rate $A_{i}$ when no agent is visiting it, 
(\romannum{2}) $R_{i}(t)$ decreases at a rate $B_{i}N_{i}(t)-A_{i}$ where $B_{i}$ is the uncertainty removal rate by an agent and $N_{i}(t)=\sum_{a=1}^{N}1_{\{s_{a}(t)=X_{i}\}}$ is the number of agents present at target $i$ at time $t$, and 
(\romannum{3}) $R_{i}(t)\geq0,\ \forall t\geq0$. 
The target uncertainty state dynamics for any $i\in\mathcal{T}$ are 
\begin{equation}\label{Eq:TargetDynamics}
    \dot{R}_i(t) = 
    \begin{cases}
    0 & \mbox{ if } R_i(t) = 0 \mbox{ and } A_i \leq B_iN_i(t),\\
    A_i - B_iN_i(t) & \mbox{ otherwise,}  
    \end{cases}
\end{equation}
with, $A_{i},B_{i}$ and $R_{i}(0)$ values pre-specified. As pointed out in \cite{Zhou2019}, this target uncertainty model has an attractive queueing system interpretation where each $A_{i}$ and $B_{i}N_{i}(t)$ can be thought of as an arrival rate and a controllable service rate respectively for each target viewed as a node in a queueing network.

\paragraph*{\textbf{Agent Model}} 
In some persistent monitoring models \cite{Zhou2018}, each agent $a\in\mathcal{A}$ is assumed to have a finite sensing range $r_{a}>0$ which allows it to decrease $R_{i}(t)$ whenever $\Vert s_{a}(t)-X_{i}\Vert\leq r_{a}$. However, we follow the approach used in \cite{Zhou2019} where $r_{a}=0$ is assumed and $N_{i}(t)$ is used to replace the joint detection probability of a target $i\in\mathcal{T}$. This simplifies the analysis and enables accommodation of the target graph topology \cite{Zhou2019}.

As we will see next, contributions of this paper are invariant to the used dynamic model of the agents (partly due to the embedded graph model). Therefore, we do not explicitly state an agent model. 




\paragraph*{\textbf{Objective Function \cite{Zhou2019}}}
The objective of this persistent monitoring system is to minimize a measure of \emph{mean system uncertainty} $J_{T}$ (evaluated over a finite time horizon $T$), where
\begin{equation}
J_{T}=\frac{1}{T}\int_{0}^{T}\sum_{i=1}^M R_{i}(t)dt,
\label{Eq:ObjectiveFunction1}%
\end{equation}
by controlling agent motion.


\paragraph*{\textbf{Target Topology (Graph)}} 
We embed a directed graph topology $\mathcal{G}=(\mathcal{V},\mathcal{E})$ into the 2D mission space where the graph vertices represent the targets ($\mathcal{V}=\{1,2,\ldots,M\}=\mathcal{T}$), and the graph edges represent inter-target trajectory segments (which may be curvilinear paths with arbitrary shapes so as to account for potential obstacles in the mission space) available for agents to travel ($\mathcal{E}\subseteq\{(i,j):i,j\in \mathcal{V}\}$). In particular, the shape of each trajectory segment $(i,j)\in\mathcal{E}$ can be considered as a result of a lower level optimal control problem which aims to minimize the travel time that an agent takes to go from target $i$ to target $j$ while accounting for potential constraints in the mission space and agent dynamics. For the purpose of this paper, we assume each trajectory segment $(i,j)\in\mathcal{E}$ to have a fixed such optimal travel time value $\rho_{ij}\in\mathbb{R}_{\geq0}$. Based on $\mathcal{E}$, the \emph{neighbor set} $\mathcal{N}_{i}$ of target $i\in\mathcal{V}$ is defined as 
$$\mathcal{N}_{i}\triangleq\{j:(i,j)\in \mathcal{E}\}.$$

Based on the target dynamics \eqref{Eq:TargetDynamics} and agent sensing capabilities assumed, to minimize the objective $J_{T}$ \eqref{Eq:ObjectiveFunction1} it is intuitive that each agent has to \emph{dwell} (i.e., remain stationary) only at targets that it visits in its trajectory. Moreover, based on the embedded target topology $\mathcal{G}$ that constrains the agent motion, it is clear that when an agent $a\in\mathcal{A}$ leaves a target $i\in\mathcal{V}$ its next target would be some target $j\in\mathcal{N}_{i}$ that is only reachable by \emph{traveling} on edge $(i,j)\in\mathcal{E}$ for a time duration of $\rho_{ij}$. In essence, this \emph{dwell-travel} approach aims to minimize the agent time spent outside of targets - which is analogous to minimizing the idle time of servers in a queueing network.

Each time an agent $a\in\mathcal{A}$ arrives at a target $i\in\mathcal{V}$, it has to determine a \emph{dwell time} $\tau_{i}^{a}\in\mathbb{R}_{\geq0}$ and a \emph{next visit} target $v_{i}^{a}\in\mathcal{N}_{i}$ (see Fig. \ref{Fig:discreteEventSystem}). Therefore, for the set of agents, the optimal control solution that minimizes the objective $J_{T}$ takes the form of a set of optimal dwelling time and next visit target sequences. Determining such an optimal solution is a challenging task even for the simplest PMN problem configurations due to the nature of the involved search space.

\begin{figure}[!h]
    \centering
    \includegraphics[width=2.7in]{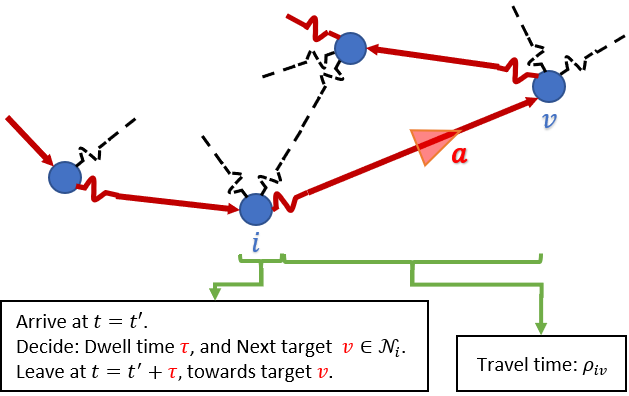}
    \caption{Agent behavior defined by its decision sequence.}
    \label{Fig:discreteEventSystem}
\end{figure}

\paragraph*{\textbf{Threshold Control Policy}} 
To address the aforementioned challenge, we adopt the threshold-based control policy (TCP) proposed in \cite{Zhou2019}. In particular, under this TCP, each agent $a\in\mathcal{A}$ makes its decisions by adhering to a set of pre-specified parameters denoted by $\Theta^{a}\in\mathbb{R}^{M\times M}$ which serve as thresholds on target uncertainties. Note that the $(i,j)$\textsuperscript{th} parameter in the $\Theta^{a}$ matrix is denoted as $\theta_{ij}^{a}\in\mathbb{R}_{\geq0}\ \forall i,j\in\mathcal{V}$.

We denote the set of neighbors of a target $i$ that violates their thresholds (i.e., have higher uncertainty values than respective thresholds) with respect to an agent $a$ residing in target $i$ at time $t$ by $\mathcal{N}_{i}^{a}(t)\subseteq\mathcal{N}_{i}$ (also called \emph{active neighbors}) where 
\begin{equation}
\mathcal{N}_{i}^{a}(t)\triangleq\{j:R_{j}(t)>\theta_{ij}^{a},\ j\in
\mathcal{N}_{i}\}. \label{Eq:ThresholdViolatedNeighbors}%
\end{equation}
Assume an agent $a$ arrives at target $i$ at a time $t=t^{\prime}$. Then, the dwell time $\tau_{i}^{a}$ to be spent at target $i$ is determined by: 
(\romannum{1}) the diagonal element $\theta_{ii}^{a}$ based on the \emph{threshold satisfaction} condition $R_{i}(t)<\theta_{ii}^{a}$ and
(\romannum{2}) the \emph{active neighbor existence} condition $\vert \mathcal{N}_{i}^{a}(t) \vert>0$ at $t=t^{\prime}+\tau_{i}^{a}$ ($\vert \cdot\vert$ is the cardinality operator). 
Subsequently, agent $a$'s next visit target $v_{i}^{a}$ is chosen from the set of active neighbors $\mathcal{N}_{i}^{a}(t)\subseteq\mathcal{N}_{i}$ using the off-diagonal thresholds $\{\theta_{iv}^{a}:v\in\mathcal{N}_{i}^{a}(t)\}$ at $t=t^{\prime}+\tau_{i}^{a}$. Formally,
\begin{equation}
\begin{aligned} \tau_i^a :=& \underset{\tau \geq 0}{\mathrm{arginf}}\ \mathbf{1}\left\{ [R_{i}(t^{\prime}+\tau) < \theta_{ii}^a] \ \& \  [\vert \mathcal{N}_{i}^{a}(t^{\prime}+\tau) \vert > 0 ] \right\}, \\ v_i^a :=& \underset{v \in \mathcal{N}_{i}^{a}(t^{\prime}+\tau_i^a)}{\mathrm{argmax}} \left\{ R_v(t^{\prime}+\tau_i^a)-\theta_{iv}^a\right\}. \end{aligned} \label{Eq:DiscreteEventSystem}%
\end{equation}
While the first condition in the $\tau_{i}^{a}$ expression in \eqref{Eq:DiscreteEventSystem} ensures that agent $a$ will dwell at target $i$ until at least its own uncertainty $R_{i}(t)$ drops below $\theta_{ii}^{a}$, the second condition ensures that when agent $a$ is ready to leave target $i$ there will be at least one neighbor $v\in\mathcal{N}_{i}$ whose uncertainty $R_{v}(t)$ has increased beyond the threshold $\theta_{iv}^{a}$. The $v_{i}^{a}$ expression in \eqref{Eq:DiscreteEventSystem} implies that $v_{i}^{a}$ is the neighboring target of $i$ chosen from the set $\mathcal{N}_{i}^{a}(t^{\prime}+\tau_{i}^{a})\subseteq\mathcal{N}_{i}$ with the largest threshold violation. In all, the update equations in \eqref{Eq:DiscreteEventSystem} define each agent's dwell time and next visit decision sequence under the TCP.

A key advantage of this TCP approach is that based on
\eqref{Eq:ThresholdViolatedNeighbors} and \eqref{Eq:DiscreteEventSystem}, each agent now only needs to use the neighboring target state information. Thus, each agent operates in a \emph{distributed} manner. An example target topology and an agent threshold matrix are shown in Fig. \ref{Fig:thresholds}. Note that when certain edges are missing in the graph, the respective off-diagonal entries in $\Theta^{a}$ are irrelevant and hence denoted by $\theta_{ij}^{a}=\infty$.

\begin{figure}[!h]
    \centering
    \includegraphics[width=3.2in]{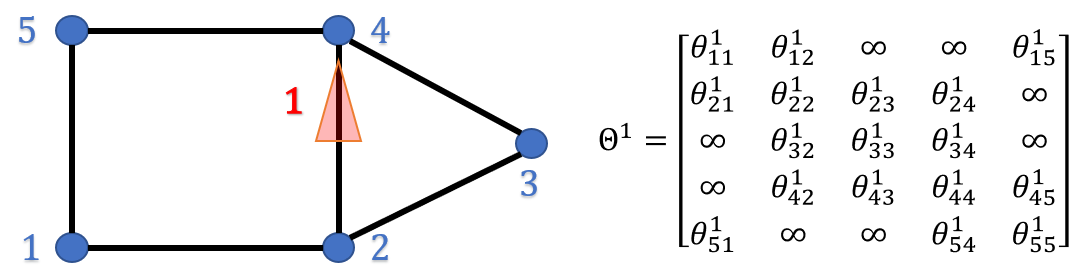}
    \caption{An example target topology with five targets and one agent with its threshold parameters}
    \label{Fig:thresholds}
\end{figure}

\paragraph*{\textbf{Discrete Event System View}} 
Under the TCP mentioned above, the behavior of the PMN system is fully defined by the set of agent decision sequences
\[
\mathcal{U}(\Theta)=\{(\tau_{i(l)}^{a}(\Theta^{a}),v_{i(l)}^{a}(\Theta
^{a})):l\in\mathbb{Z}_{>0},\ a\in\mathcal{A}\},
\]
where $\Theta\in\mathbb{R}^{M\times M\times N}$ is the collection of all agent threshold matrices and $i:\mathbb{Z}_{>0}\rightarrow\mathcal{V}$ (in other words, $i(l)$ is the $l$\textsuperscript{th} target visited by agent $a$).

Moreover, according to \eqref{Eq:DiscreteEventSystem}, the complete persistent monitoring system can be modelled as a discrete event system (DES), specifically as a \emph{deterministic automata with outputs} \cite{Cassandras2010}. In that case, the \emph{state} would be the $N \times 2$ tuple with agent modes and their residing/pursuing target information. The \emph{event set} is the set of: (\romannum{1}) all possible agent arrivals and departures at/from targets, (\romannum{2}) the instances where a target uncertainty reaches $0$ from above, and (\romannum{3}) a `start' and an `end' events triggered only at times $t=0$ and $t=T$ respectively. The \emph{output} can be considered as a vector $\bar{R}^{k} = [R_i(t^k)]_{i\in\mathcal{V}} \in \R^M$ evaluated at all the event times $\{t^k: k\in \{0,1,\ldots,K\}\}$ with $t^0 = 0$ and $t^K = T$. Note that under some TCP $\Theta$, we can get the state and output trajectories of the DES along with its event times by simulating \eqref{Eq:DiscreteEventSystem}.

Considering the dependence of both the state trajectory and the output trajectory on the chosen set of parameters $\Theta$, the performance metric $J_{T}$ in \eqref{Eq:ObjectiveFunction1} depends on the parameters $\Theta$. Therefore, within the TCP class of agent controllers, we aim at determining an Optimal TCP (OTCP) $\Theta^{\ast}$ such that 
\begin{equation}
\Theta^{\ast}=\underset{\Theta\geq\mathbf{0}}{\mathrm{argmin}}\ J_{T}%
(\Theta)=\frac{1}{T}\sum_{i=1}^{M}\sum_{k=0}^{K}\int_{t^{k}}^{t^{k+1}}%
R_{i}(t)dt. \label{Eq:ObjectiveFunction2}%
\end{equation}
Differentiating the cost $J_{T}(\Theta)$ w.r.t. parameters $\Theta$ yields,
\begin{equation}
\nabla J_{T}(\Theta)=\frac{1}{T}\sum_{i=1}^{M}\sum_{k=0}^{K}\int_{t^{k}%
}^{t^{k+1}}\nabla R_{i}(t)dt,
\end{equation}
where $\nabla\equiv\frac{\partial}{\partial\Theta}$. It should be noted that even though event times are dependent on the TCP $\Theta$, when taking the derivative \cite{Flanders1973}, the effect of it gets canceled out since we have fixed $t^{0}=0$ and $t^{K}=T$ \cite{Cassandras2010b}. Further, using the linear behavior of the target uncertainty dynamics in \eqref{Eq:TargetDynamics}, and the way that we have designed our event space, following Lemma 1 in \cite{Zhou2019} we can easily show that $\nabla R_{i}(t)=\nabla R_{i}(t^{k})\ \forall t\in(t^{k},t^{k+1}]$. Therefore, the gradient $\nabla J_{T}(\Theta)$ becomes a simple summation:
\begin{equation}
\nabla J_{T}(\Theta)=\frac{1}{T}\sum_{i=1}^{M}\sum_{k=0}^{K}\nabla R_{i}%
(t^{k})(t^{k+1}-t^{k}).
\end{equation}

\paragraph*{\textbf{Previous Work in \cite{Zhou2019}}} 
In \cite{Zhou2019}, where the class of TCP controllers was introduced, the use of Infinitesimal Perturbation Analysis (IPA) \cite{Cassandras2010b} is extensively discussed so as to evaluate $\nabla R_{i}(t^{k})$ (hence, $\nabla J_{T}(\Theta)$) on-line and in a distributed manner. This enables the use of a gradient descent algorithm: 
\begin{equation}
\Theta^{(l+1)}=\left[  \Theta^{(l)}-\beta^{(l)}\nabla J_{T}(\Theta
^{(l)})\right]  ^{+}, \label{Eq:GradientDescent}%
\end{equation}
to update the TCP $\Theta$ iteratively. In \eqref{Eq:GradientDescent}, the projection operator $[\cdot]^{+}=\max\{0,\cdot\}$ is used. The step size $\beta^{(l)}$ is selected such that it diminishes according to the standard conditions $\sum_{l=1}^{\infty}\beta^{(l)}=\infty$ and $\lim_{l\rightarrow \infty}\beta^{(l)}=0$ \cite{Bertsekas2016nonlinear}. Note that each iteration $l$ of \eqref{Eq:GradientDescent} uses the data collected from a single trajectory (i.e., $\forall t \in[0,T]$) to evaluate $\nabla J_{T}(\Theta^{(l)})$.

The work in \cite{Zhou2019} uses a hybrid system model to construct realizations of this persistent monitoring system. However, in our formulation above, we have shown that it can be done via using a simple discrete event system model \eqref{Eq:DiscreteEventSystem}. The use of a DES model results in faster and efficient simulations and provides more intuition about the underlying decision making process. However, this modeling discrepancy will not affect any of our comparisons/conclusions made with respect to \cite{Zhou2019}.

\paragraph*{\textbf{Initialization:} $\Theta^{(0)}$} 
The work in \cite{Zhou2019} has used a randomly generated set of initial thresholds as $\Theta^{(0)}$ for \eqref{Eq:GradientDescent}. Due to the non-convexity of the objective function \eqref{Eq:ObjectiveFunction2}, the resulting value of $\Theta$ when \eqref{Eq:GradientDescent} converges is a local minimum that depends heavily on $\Theta^{(0)}$. Hence, a carefully selected high-performing $\Theta^{(0)}$ can be expected to provide significant improvements over the local minimum obtained from randomly selected $\Theta^{(0)}$. Motivated by this idea, we first investigate the structural and behavioral properties of the underlying PMN system. Then, that knowledge is used to construct a candidate for $\Theta^{(0)}$.

\paragraph*{\textbf{Simulation Results}} 
In the ensuing discussion, we consider the problem configurations shown in Fig. \ref{Fig:SASE1RandomInitialization}(a) and Fig. \ref{Fig:MASE1RandomInitialization}(a) as running examples. In those diagrams, blue circles represent the targets while black lines represent available path segments that agents can take to travel between targets. Red triangles and the yellow vertical bars indicate the agent locations and the target uncertainty levels, respectively. Moreover, since both of those quantities are time-varying ($s_a(t)$ and $R_i(t)$), in figures we indicate their state at the terminal time $t=T$ in a simulation where the best TCP found $\Theta^*$ is used. Note that the problem configurations shown in Fig. \ref{Fig:MASE1RandomInitialization}(a) is a multi-agent situation with $N=3$. 

The Fig. \ref{Fig:SASE1RandomInitialization}(b) and Fig. \ref{Fig:MASE1RandomInitialization}(b) shows the evolution of $J_T(\Theta^{(l)})$ when the TCP $\Theta^{(l)}$ is updated according to \eqref{Eq:GradientDescent} using gradients $\nabla J_T(\Theta^{(l)})$ given by the IPA method as proposed in \cite{Zhou2019}. The termination condition used for \eqref{Eq:GradientDescent} is $\Vert \Theta^{(l+1)}-\Theta^{(l)}\Vert_\infty \leq \epsilon$ where $\epsilon$ is a small positive number. If the termination condition occurs at the iteration $l=L$, then $\Theta^* = \Theta^{(L)}$ is used in generating the Fig.  \ref{Fig:SASE1RandomInitialization}(a) and Fig. \ref{Fig:MASE1RandomInitialization}(a) as mentioned before.  

The proposing persistent monitoring solution technique (discussed in the ensuing sections) including the method proposed in \cite{Zhou2019} were implemented in a JavaScript based simulator which is made available at \href{http://www.bu.edu/codes/simulations/shiran27/PersistentMonitoring/}{\seqsplit{http://www.bu.edu/codes/simulations/shiran27/PersistentMonitoring/}}. Readers are invited to reproduce the reported results and also to try new problem configurations using the developed interactive simulator. It should be highlighted that all the problem parameters (numerical values) can be customized in the developed simulator.
 
In simulation examples used in this paper, numerical values of the underlying problem parameters have been chosen as follows. The target parameters were chosen as $A_i=1,\ B_i=10$ and $R_i(0) = 0.5,\ \forall i \in \mathcal{V}$. Also, the used target location co-ordinates (i.e., $X_i$) are specified in each problem configuration figure. Note that in all the examples, all the targets have been placed inside a $600\times 600$ mission space. The interested time period (i.e., the time horizon) was taken as $T=500$. Each agent is assumed to follow the first order dynamics (as in \cite{Zhou2019}) with a maximum speed of $50$ units per second. The initial locations of the agents were chosen such that they are uniformly distributed among the targets at $t=0$ (i.e., $s_a(0) = X_i$ with $i = 1+(a-1)*\mathrm{round}(M/N)$). In cases where the initial TCP $\Theta^{(0)}$ is randomly generated, each finite element in $\Theta^{(0)}$ matrices is chosen from uniform random distribution $\mathrm{unif}(0,10)$. Also, when using the gradient descent in \eqref{Eq:GradientDescent}, diminishing step sizes $\beta^{(l)} = \frac{0.25}{\sqrt{l}}$ was used.  

\begin{figure}[!h]
     \centering
     \begin{subfigure}[b]{0.35\columnwidth}
         \centering
         \includegraphics[width = \textwidth]{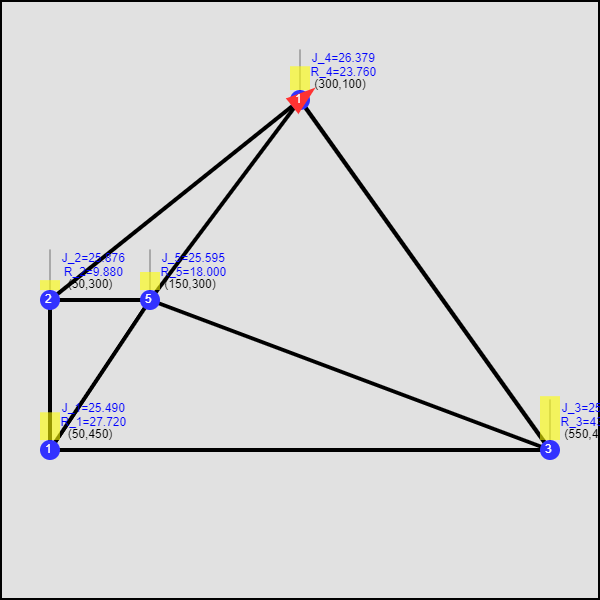}
         \caption{Config. at $t=T$.}
         
     \end{subfigure}
     \hfill
     \begin{subfigure}[b]{0.6\columnwidth}
         \centering
         \includegraphics[width = \textwidth]{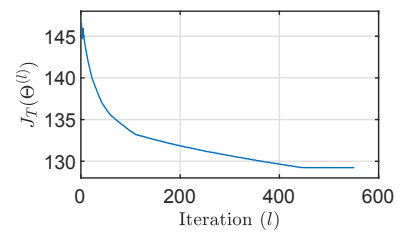}
         \caption{Cost vs iterations plot.}
         
     \end{subfigure}
    \caption{Single agent simulation example 1 (SASE1): Starting with a random $\Theta^{(0)}$, converged to a TCP with the cost $J_T = 129.2$.}
    \label{Fig:SASE1RandomInitialization}
\end{figure}

\begin{figure}[!h]
     \centering
     \begin{subfigure}[b]{0.35\columnwidth}
         \centering
         \includegraphics[width = \textwidth]{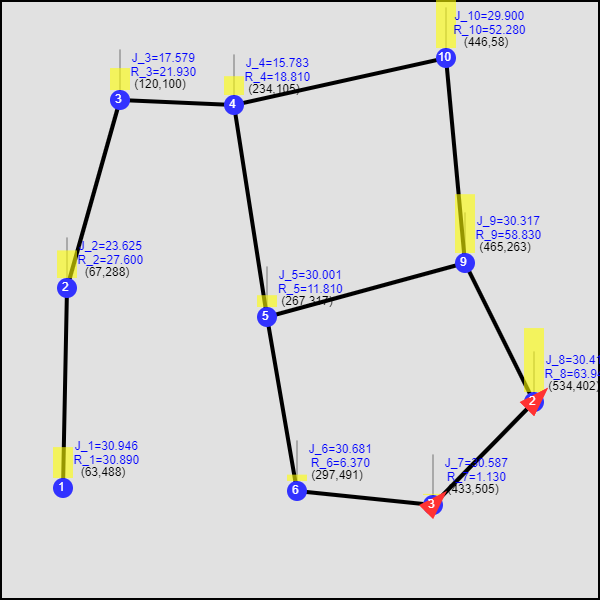}
         \caption{Config. at $t=T$.}
         
     \end{subfigure}
     \hfill
     \begin{subfigure}[b]{0.6\columnwidth}
         \centering
         \includegraphics[width = \textwidth]{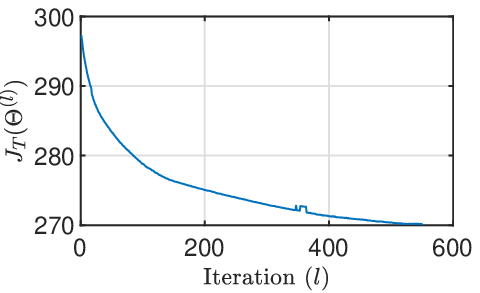}
         \caption{Cost vs iterations plot.}
         
     \end{subfigure}
    \caption{Multi-agent simulation example 1 (MASE1): Starting with a random $\Theta^{(0)}$, converged to a TCP with the cost $J_T = 270.2$.}
    \label{Fig:MASE1RandomInitialization}
\end{figure}

\section{Single-Agent PMN Solution on Sufficiently Dense Graphs}\label{Sec:DenseGraphsSingleAgent}
\paragraph*{\textbf{Main Conclusions of \cite{Zhou2019}}} 
It is proved in \cite{Zhou2019} that in a single-agent persistent monitoring system it is optimal to make the target uncertainty $R_{i}(t)=0$ on each visit of agent $a$ at target $i$. In other words, in the OTCP, $\theta_{ii}^{a}=0$. Moreover, experimental results in \cite{Zhou2019} provide some intuition about better performing agent behaviors: 
(\romannum{1}) after a brief initial transient phase, each agent converges to a (steady-state) periodic behavior where it cycles across a fixed subset of targets, and, 
(\romannum{2}) in this steady state, agents do not tend to share targets with other agents.

\paragraph*{\textbf{Our Approach}} 
We now aim to use the aforementioned observations to efficiently construct better performing (favorable) agent trajectories which can be used to initialize the TCP allowing the gradient descent scheme \eqref{Eq:GradientDescent} to achieve much better performance compared to random initialization approach used in \cite{Zhou2019}. Such trajectories take the form of a \emph{target-cycle} on the given graph. Therefore, we need to construct a set of target-cycles (one per agent) in the given graph topology.

In this section, as a starting point (and to make the problem tractable), we only focus on single-agent persistent monitoring problems on \emph{sufficiently dense} target topologies. More precisely, we consider a given target topology $\mathcal{G} = (\mathcal{V},\mathcal{E})$ to be a `sufficiently dense' graph, if $\mathcal{G}$ is \emph{bi-triangular}. We formally define the concept of bi-triangularity in Definition \ref{Def:BiTriangular}. 

\begin{definition} \label{Def:BiTriangular}
A directed graph $\mathcal{G} = (\mathcal{V},\mathcal{E})$ with $\vert\mathcal{V} \vert> 3$ is \textbf{bi-triangular} if for all $(i,j) \in\mathcal{E}$ there exists $k,l \in\mathcal{V}$ such that $(i,k),(k,j) \in\mathcal{E}$, $(i,l),(l,j)\in\mathcal{E}$, and $k \neq l$.
\end{definition}

The following assumption formally states the conditions we assume in the analysis given in this section. However, it is worth mentioning that, in the forthcoming sections we will completely relax this assumption.   

\begin{assumption}
\label{As:ProblemConfigurationType1} 
(\romannum{1}) Only one agent is available (i.e., $\mathcal{A}=\{a\}$) and 
(\romannum{2}) The given target topology $\mathcal{G}=(\mathcal{V},\mathcal{E})$ is bi-triangular.
\end{assumption}

Under this Assumption \ref{As:ProblemConfigurationType1}, we only search for a single target-cycle (agent trajectory) in the given graph $\mathcal{G}$. Moreover, exploiting the assumed dense nature of the given graph, we propose an iterative greedy scheme to construct a high performing target-cycle. This constructed agent trajectory is then transformed to a TCP as $\Theta^{(0)}$ for the subsequent use in gradient descent \eqref{Eq:GradientDescent} to obtain an OTCP $\Theta^*$. 

We note that this single-agent persistent monitoring setup was introduced in \cite{Welikala2019P3} without proofs or explicit algorithms. Sections \ref{Sec:SparseGraphsSingleAgent} and \ref{Sec:MultiAgents} provide the generalizations to arbitrary networks and multi-agent systems respectively, both not included in \cite{Welikala2019P3}.

\subsection{Analysis of an Unconstrained Target-Cycle}

We formally define a \emph{target-cycle} as a finite sequence of targets selected from $\mathcal{V}$ of the given graph $\mathcal{G} = (\mathcal{V},\mathcal{E})$ such that the corresponding sequence of edges also exists in $\mathcal{E}$. An \emph{unconstrained target-cycle} is a target-cycle with no target on it being repeated. We define $\mathcal{C}$ to be the set of all possible unconstrained target-cycles on the graph $\mathcal{G}$. A generic unconstrained target-cycle in $\mathcal{C}$ is denoted by $\Xi_{i}=\{i_{1},i_{2},\ldots,i_{m}\}\subseteq\mathcal{V}$, where $i_{j}\in \mathcal{V},\ \forall j\in\{1,2,\ldots,m\}$ and $m=\vert\Xi_{i} \vert\leq M$. The corresponding sequence of edges (fully defined by $\Xi_{i}$) are denoted by $\xi_{i} = \{(i_{m},i_{1}),(i_{1},i_{2}),\ldots,(i_{m-1},i_{m})\}\subseteq\mathcal{E}$.

Since we aim to greedily construct a target-cycle which results in a high-performing mean system uncertainty value (i.e., a low $J_{T}$ in \eqref{Eq:ObjectiveFunction1}), we need to have an assessment criterion for any given arbitrary target-cycle. Thus, we define the \emph{steady state mean cycle uncertainty} metric $J_{ss}(\Xi_{i})$:
\begin{equation}
J_{ss}(\Xi_{i})=\lim_{T\rightarrow\infty}\frac{1}{T}\int_{0}^{T}\sum_{j\in
\Xi_{i}}R_{j}(t)dt. \label{Eq:SteadyStateMeanCycleUncertainty0}%
\end{equation}
We now present a computationally efficient off-line method to evaluate $J_{ss}(\Xi_{i})$ for any $\Xi_{i}\in\mathcal{C}$. For notational convenience, we first relabel $\Xi_{i}$ and its targets as $\Xi=\{1,2,\ldots,n,n+1,\ldots,m\}$ by omitting the subscript $i$ (see Fig. \ref{Fig:cycleGeometry1}). We then make the following assumption regarding the agent's behavior on a corresponding target-cycle $\Xi\in\mathcal{C}$.

\begin{assumption}
\label{As:AgentBehavior1} 
After visiting a target $n \in\Xi$, the agent will leave it if and only if the target uncertainty $R_{n}$ reaches zero.
\end{assumption}

Here, the `only if' component follows from the aforementioned theoretical result in \cite{Zhou2019}: it is optimal to make the target uncertainty $R_{n}(t)=0$ whenever the agent visits target $n\in\Xi$. The `if' component restricts agent decisions by assuming the existence of an active neighbor to target $n$ as soon as $R_{n}(t)=0$ occurs in \eqref{Eq:DiscreteEventSystem}. At this point, we recall that our main focus is only on initializing\ \eqref{Eq:GradientDescent} and thus any potential sub-optimalities arsing from the use of Assumption \ref{As:AgentBehavior1} will be compensated by the eventual use of \eqref{Eq:GradientDescent}.

\begin{figure}[!h]
    \centering
    \includegraphics[width=3in]{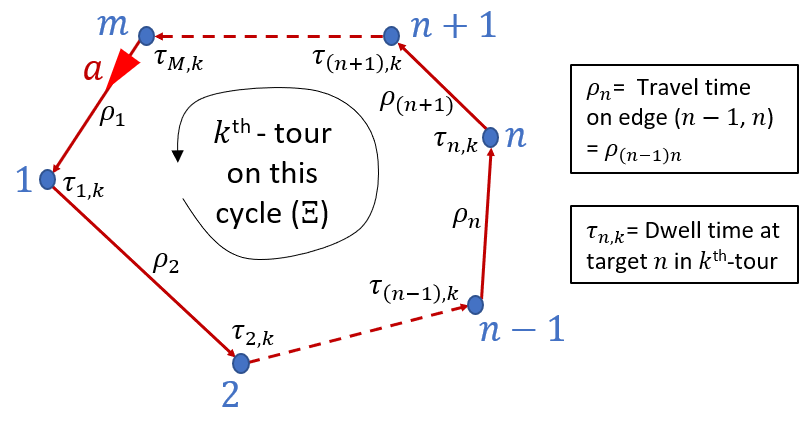}
    \caption{A generic single agent unconstrained cycle $\Xi$.}
    \label{Fig:cycleGeometry1}
\end{figure}

A \emph{tour} on the target-cycle $\Xi$ (shown in Fig. \ref{Fig:cycleGeometry1}) starts/ends when the agent (i.e., $a$) leaves the last target $m$ to reach target $1$. The dwell time spent on a target $n\in \Xi$ when the agent is in its $k$\textsuperscript{th} tour on $\Xi$ is denoted as $\tau_{n,k}^{a}$ and the travel time spent on an edge $(n-1,n)\in \mathcal{E}$ is $\rho_{(n-1)n}$ by definition. Without any ambiguity, we use the notation $\tau_{n,k}$ and $\rho_{n}$ (with $\rho_{1}=\rho_{m1}$) to represent these two quantities respectively. Moreover, target $n$'s uncertainty level at the end of the $k$\textsuperscript{th} tour is denoted by $R_{n,k}$. Under this notation, the trajectory of the target uncertainty $R_{n}(t)$ over $k$\textsuperscript{th} and $(k+1)$\textsuperscript{th} tours is shown in Fig. \ref{Fig:cycleGraphs1}.

\begin{figure}[!h]
    \centering
    \includegraphics[width=3.2in]{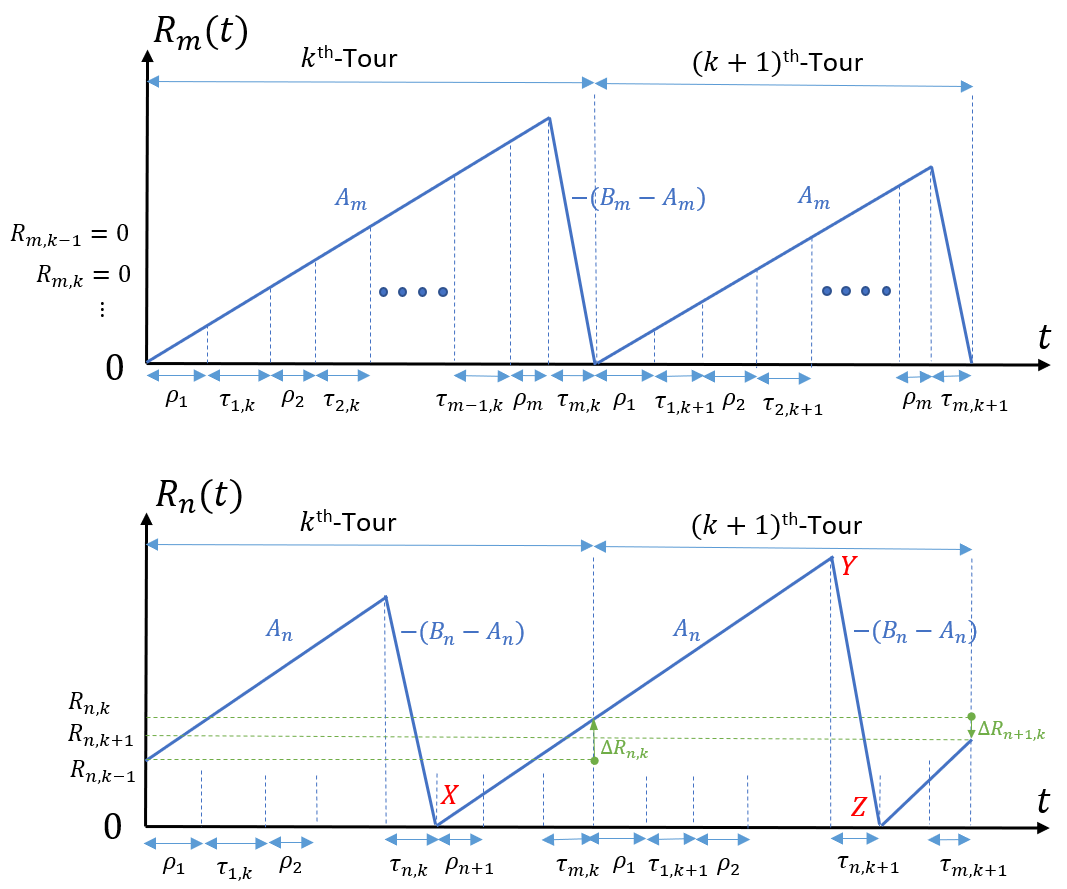}
    \caption{Variation of target uncertainties during agent tours.}
    \label{Fig:cycleGraphs1}
\end{figure}

The geometry of the $XYZ$ triangle shown in Fig. \ref{Fig:cycleGraphs1} can be used to derive the dynamics of target $n$'s dwell time $\tau_{n,k}$ (w.r.t. $k$) as
\begin{equation}
\label{Eq:DwellTimeDynamics1}(B_{n}-A_{n})\tau_{n,k+1} = A_{n} \Big( \sum
_{i=n+1}^{m}\left[  \rho_{i} + \tau_{i,k}\right]  + \sum_{i=1}^{n-1}\left[
\rho_{i}+\tau_{i,k+1}\right]  + \rho_{n} \Big).
\end{equation}
Setting $\alpha_{n} \triangleq\frac{B_{n}-A_{n}}{A_{n}}$ and $\rho_{\Xi} \triangleq\sum_{i=1}^{m}\rho_{i}$ (the total cycle travel time), the above relationship \eqref{Eq:DwellTimeDynamics1} can be simplified as:
\begin{equation}\label{Eq:DwellTimeDynamics2}
        -\sum_{i=1}^{n-1} \tau_{i,k+1} +  \alpha_n \tau_{n,k+1} = \rho_{\Xi} + \sum_{i=n+1}^m \tau_{i,k}.
\end{equation}
Now, \eqref{Eq:DwellTimeDynamics2} can be written for all $n\in\Xi$ in a compact form using the vectors $\bar{\tau}_k = [\tau_{1,k}, \tau_{2,k}, \ldots , \tau_{m,k}]^T$, $\bar{\alpha} = [\alpha_{1}, \alpha_{2}, \ldots , \alpha_{m}]^T$ and $\bar{1}_m = [1,1,\ldots,1]^T\in \R^m$, as,
\begin{equation}\label{Eq:DwellTimeDynamics3}
    \Delta_1 \bar{\tau}_{k+1} = \Delta_2 \bar{\tau}_k + \bar{1}_m \rho_{\Xi},
\end{equation}
where $\Delta_{2}\in\mathbb{R}^{m \times m}$ is the strictly upper triangular matrix with all non-zero elements being 1 and $\Delta_{1} = diag(\bar{\alpha})-\Delta_{2}^{T}$. The affine linear system expression in \eqref{Eq:DwellTimeDynamics3} describes the evolution of agent dwell times at targets on the target-cycle $\Xi$ over the number of tours completed $k$. To get an explicit expression for the steady state mean cycle uncertainty $J_{ss}(\Xi)$ defined in \eqref{Eq:SteadyStateMeanCycleUncertainty0}, we make use of the following three lemmas.

\begin{lemma}
\label{Lm:ShermonMorrison} 
(Shermon-Morrison lemma, \cite{Miller1981}) Suppose $A\in\mathbb{R}^{m\times m}$ is an invertible matrix and $u,v\in \mathbb{R}^{m\times1}$ are vectors. Then, $det(A+uv^{T})=(1+v^{T}A^{-1}u)det(A)$ and
\[
(1+v^{T}A^{-1}u)\neq0\iff(A+uv^{T})^{-1}=A^{-1}-\frac{A^{-1}uv^{T}A^{-1}%
}{1+v^{T}A^{-1}u}.
\]
\end{lemma}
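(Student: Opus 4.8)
The plan is to establish the two claims in sequence, proving the determinant identity first and then leveraging it for the inverse formula. For the determinant, the key is the factorization $A + uv^{T} = A(I + A^{-1}uv^{T})$, valid since $A$ is invertible; multiplicativity of the determinant then gives $\det(A+uv^{T}) = \det(A)\det(I + A^{-1}uv^{T})$. Writing $w \triangleq A^{-1}u$, it remains to prove the rank-one identity $\det(I + wv^{T}) = 1 + v^{T}w$, after which substituting $w = A^{-1}u$ yields exactly $\det(A+uv^{T}) = (1 + v^{T}A^{-1}u)\det(A)$.

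To prove this rank-one identity cleanly, I would evaluate the determinant of the $(m+1)\times(m+1)$ block matrix $\left[\begin{smallmatrix} I & -w \\ v^{T} & 1\end{smallmatrix}\right]$ in two ways via Schur complements. Eliminating the scalar bottom-right block produces the Schur complement $I + wv^{T}$, so one evaluation gives $\det(I + wv^{T})$; eliminating the top-left identity block produces the scalar Schur complement $1 + v^{T}w$, so the other evaluation gives $1 + v^{T}w$. Equating the two yields $\det(I + wv^{T}) = 1 + v^{T}w$. (An alternative is to note that $wv^{T}$ is rank one with eigenvalues $v^{T}w$ and $0$ of multiplicity $m-1$, whence $I + wv^{T}$ has determinant $(1 + v^{T}w)\cdot 1^{m-1}$.)

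For the inverse, the forward direction (assume $\beta \triangleq 1 + v^{T}A^{-1}u \neq 0$) is handled by direct verification: I would multiply $A + uv^{T}$ by the candidate $B = A^{-1} - \frac{A^{-1}uv^{T}A^{-1}}{\beta}$ and expand. The crucial step is to treat $v^{T}A^{-1}u = \beta - 1$ as a scalar that factors out of the matrix products; the coefficient of the rank-one term $uv^{T}A^{-1}$ then collapses to $1 - \beta/\beta = 0$, leaving $I$. Since $A+uv^{T}$ is square, this right inverse is the two-sided inverse. The reverse direction of the iff follows immediately from the determinant identity: if $A+uv^{T}$ is invertible then $\det(A+uv^{T}) = \beta\det(A) \neq 0$, and since $\det(A)\neq 0$ we must have $\beta \neq 0$.

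No serious obstacle arises, as this is a classical result; the only point requiring care is the rank-one determinant step $\det(I + wv^{T}) = 1 + v^{T}w$, for which the two-sided Schur-complement evaluation is the most economical route. Throughout, the essential algebraic fact exploited is that $v^{T}A^{-1}u$ is a scalar, which is precisely what permits the factoring and cancellation that make both formulas drop out.
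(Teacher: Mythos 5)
Your proof is correct, but note that the paper itself offers no proof of this lemma to compare against: it is stated as a known result (the Sherman--Morrison formula) and attributed to the reference \cite{Miller1981}, then simply invoked later in the proof of Lemma \ref{Lm:SteadyStateDwellTimes1}. Your argument is a sound, self-contained derivation of that classical fact: the factorization $A+uv^{T}=A(I+A^{-1}uv^{T})$ together with the two-sided Schur-complement evaluation of $\det\left[\begin{smallmatrix} I & -w \\ v^{T} & 1\end{smallmatrix}\right]$ correctly yields $\det(I+wv^{T})=1+v^{T}w$, the direct multiplication $(A+uv^{T})B=I$ with the scalar cancellation $1-\frac{1}{\beta}-\frac{\beta-1}{\beta}=0$ verifies the inverse formula when $\beta=1+v^{T}A^{-1}u\neq 0$, and the determinant identity gives the converse (invertibility of $A+uv^{T}$ forces $\beta\neq 0$ since $\det(A)\neq 0$). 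The only point worth flagging is your parenthetical alternative: the eigenvalue argument for $\det(I+wv^{T})$ should be phrased in terms of algebraic multiplicities (characteristic polynomial $\lambda^{m-1}(\lambda-v^{T}w)$), since when $v^{T}w=0$ the matrix $wv^{T}$ is nilpotent rather than diagonalizable; your primary Schur-complement route avoids this subtlety entirely.
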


\begin{lemma}
\label{Lm:SteadyStateDwellTimes1} 
When $\sum_{i=1}^{m} \frac{A_{i}}{B_{i}} < 1$, the dynamic system given in \eqref{Eq:DwellTimeDynamics3} has a feasible equilibrium point $\bar{\tau}_{eq}$ (reached at $k=k_{eq}$),
\begin{equation}
\label{Eq:SteadyStateDwellTimes1}
\bar{\tau}_{eq} = \left(  \frac{\bar{\beta}}{1-\bar{1}_{m}^{T}\bar{\beta}}\right)\rho_{\Xi}, \ \mbox{ i.e., }
\tau_{n,k_{eq}} = \left(  \frac{\beta_{n}}{1-\sum_{i=1}^{m} \beta_{i}}\right)
\rho_{\Xi},
\end{equation}
for all $n \in\Xi$ with $\beta_{n}\triangleq\frac{A_{n}}{B_{n}}$ and
$\bar{\beta} = [\beta_{1},\beta_{2},\ldots,\beta_{m}]^{T}$.
\end{lemma}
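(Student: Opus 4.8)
The plan is to locate the equilibrium by setting $\bar{\tau}_{k+1}=\bar{\tau}_k=\bar{\tau}_{eq}$ in the affine recursion \eqref{Eq:DwellTimeDynamics3}, which collapses the dynamics to the single linear system $(\Delta_1-\Delta_2)\bar{\tau}_{eq}=\bar{1}_m\rho_\Xi$. The whole lemma then reduces to three tasks: (i) inverting the coefficient matrix $\Delta_1-\Delta_2$, (ii) producing the claimed closed form, and (iii) verifying that the resulting $\bar{\tau}_{eq}$ is feasible, i.e. componentwise nonnegative. The first step I would carry out is the structural simplification of $\Delta_1-\Delta_2$ that makes Lemma \ref{Lm:ShermonMorrison} directly applicable.

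The central observation is that $\Delta_1-\Delta_2$ is a rank-one perturbation of an invertible diagonal matrix. Since $\Delta_2$ is strictly upper triangular with unit entries, $\Delta_2^T$ is strictly lower triangular with unit entries, so $\Delta_2+\Delta_2^T=\bar{1}_m\bar{1}_m^T-I$ (ones off the diagonal, zeros on it). Combining this with $\Delta_1=\mathrm{diag}(\bar{\alpha})-\Delta_2^T$ and the identity $\alpha_n+1=\frac{B_n-A_n}{A_n}+1=\frac{B_n}{A_n}=\frac{1}{\beta_n}$ gives $\Delta_1-\Delta_2=\mathrm{diag}(\bar{\alpha})+I-\bar{1}_m\bar{1}_m^T=D-\bar{1}_m\bar{1}_m^T$, where $D\triangleq\mathrm{diag}(1/\beta_1,\ldots,1/\beta_m)$ is invertible with $D^{-1}=\mathrm{diag}(\bar{\beta})$.

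Next I would invoke the Shermon-Morrison identity of Lemma \ref{Lm:ShermonMorrison} with $A=D$, $u=-\bar{1}_m$ and $v=\bar{1}_m$. The scalar gate $1+v^TA^{-1}u=1-\bar{1}_m^T D^{-1}\bar{1}_m=1-\sum_{i=1}^m\beta_i=1-\bar{1}_m^T\bar{\beta}$ is precisely where the hypothesis $\sum_i A_i/B_i<1$ enters: it forces this quantity to be strictly positive, hence nonzero, so the inverse exists. Substituting and using $D^{-1}\bar{1}_m=\bar{\beta}$ yields $(D-\bar{1}_m\bar{1}_m^T)^{-1}=D^{-1}+\frac{\bar{\beta}\bar{\beta}^T}{1-\bar{1}_m^T\bar{\beta}}$; applying this to $\bar{1}_m\rho_\Xi$, factoring out $\bar{\beta}$ and combining the two resulting terms over the common denominator $1-\bar{1}_m^T\bar{\beta}$ collapses everything to $\bar{\tau}_{eq}=\frac{\bar{\beta}}{1-\bar{1}_m^T\bar{\beta}}\rho_\Xi$, exactly the asserted formula. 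Feasibility is then immediate: each $\beta_n=A_n/B_n>0$, the denominator is positive by hypothesis, and $\rho_\Xi\geq0$, so every component $\tau_{n,k_{eq}}\geq0$.

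I expect no serious obstacle once the rank-one form is exposed; the computations afterward are routine. The one step deserving genuine care is the bookkeeping behind the matrix identity $\Delta_2+\Delta_2^T=\bar{1}_m\bar{1}_m^T-I$ together with the conversion of $\mathrm{diag}(\bar{\alpha})+I$ into $\mathrm{diag}(1/\beta_n)$, since a single sign or index slip there would corrupt the Shermon-Morrison application. I would also flag that the statement says the equilibrium is "reached at $k=k_{eq}$," which is a convergence claim stronger than mere existence of a fixed point. The uniqueness of the fixed point follows for free from the invertibility of $\Delta_1-\Delta_2$ established above (note that $\Delta_1$ is lower triangular with positive diagonal $\alpha_n>0$, hence invertible, so the iteration map $\bar{\tau}\mapsto\Delta_1^{-1}(\Delta_2\bar{\tau}+\bar{1}_m\rho_\Xi)$ is well defined); if convergence of the iterates to this point is intended to be part of this lemma rather than argued elsewhere, I would additionally show that $\Delta_1^{-1}\Delta_2$ is Schur-stable under the same summability hypothesis.
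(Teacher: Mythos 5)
Your proposal is correct and follows essentially the same route as the paper's own proof: the fixed-point equation $(\Delta_1-\Delta_2)\bar{\tau}_{eq}=\bar{1}_m\rho_\Xi$, the identity $I+\Delta_2+\Delta_2^T=\bar{1}_m\bar{1}_m^T$ combined with $\alpha_n+1=1/\beta_n$ to expose the rank-one structure, the Sherman--Morrison inversion, and the positivity of $1-\bar{1}_m^T\bar{\beta}$ for feasibility. Your closing remark about convergence is also apt, since the paper indeed defers that to Lemma \ref{Lm:StabilityOfDWellTimeDynamics1} under Assumption \ref{As:SchurStability}.
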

\begin{proof}
At $k=k_{eq}$, in \eqref{Eq:DwellTimeDynamics3}, $\bar{\tau}_{k+1} = \bar{\tau}_{k} = \bar{\tau}_{eq}$. Therefore,
\begin{align}
\bar{\tau}_{eq} = (\Delta_1 - \Delta_2)^{-1}\bar{1}_m \rho_{\Xi}.
\end{align}
Using $\Delta_1 = diag(\bar{\alpha}) - \Delta_2^T$ and $diag(\bar{1}_m)+\Delta_2^T + \Delta_2 = \bar{1}_m \bar{1}_m^T$,
\begin{align}
    \bar{\tau}_{eq} = (diag(\bar{\alpha} + \bar{1}_m) - \bar{1}_m \bar{1}_m^T)^{-1} \bar{1}_m \rho_{\Xi}.
\end{align}
The expressions used for $\alpha_n$ and $\beta_n$ gives that $(\alpha_n+1)=1/\beta_n$. Therefore, $(diag(\bar{\alpha} + \bar{1}_m))^{-1} = diag(\bar{\beta})$. Also, note that $diag(\bar{\beta})\bar{1}_m = \bar{\beta}$ and $I_m\in \R^{m \times m}$ is an identity matrix. Now, using Lemma \ref{Lm:ShermonMorrison},
\begin{align}
    \bar{\tau}_{eq} 
    &= diag(\bar{\beta})
    \left(I_m + \frac{\bar{1}_m\bar{1}_m^T diag(\bar{\beta})}{1-\bar{1}_m^T\bar{\beta}}\right) \bar{1}_m\rho_{\Xi}\nonumber\\
    &= diag(\bar{\beta})
    \left(\bar{1}_m + \frac{\bar{1}_m\bar{1}_m^T \bar{\beta}}{1-\bar{1}_m^T\bar{\beta}}\right) \rho_{\Xi}\nonumber\\
    &= \left(\frac{\bar{\beta}}{1-\bar{1}_m^T\bar{\beta}}\right) \rho_{\Xi}.
\end{align}
Components of $\bar{\tau}_{eq}$ are non-negative only when 
$1-\bar{1}_m^T\bar{\beta} > 0$. Thus, using the definition of $\bar{\beta}$, we get $\bar{1}_m^T\bar{\beta} = \sum_{i=1}^m \frac{A_i}{B_i} < 1$.
\end{proof}

In order to establish the stability properties of $\bar{\tau}_{eq}$ given by the Lemma \ref{Lm:SteadyStateDwellTimes1}, we need make the following assumption.

\begin{assumption}
\label{As:SchurStability} 
The matrix $\Delta_{1}^{-1}\Delta_{2}$ is Schur stable \cite{Bof2018}.
\end{assumption}

Note that the eigenvalues of $\Delta_{2}$ are located at the origin as it is a strictly upper triangular matrix. Further, the eigenvalues of $\Delta_{1}^{-1}$ are located at $\{\frac{1}{\alpha_{i}}:i\in\Xi\}$ since $\Delta_{1}$ is a lower triangular matrix with its diagonal elements being $\{\alpha_{i}:i\in\Xi\}$. Using the definition of $\alpha_{i}(=\frac{B_{i}-A_{i}}{A_{i}})$, it is easy to show that $\vert\frac{1}{\alpha_{i}}\vert<1\iff0 \leq \frac{A_{i}}{B_{i}}<\frac{1}{2}$, which is a less restrictive condition than $\sum_{i=1}^{m}\frac{A_{i}}{B_{i}}<1$ required in Lemma \ref{Lm:SteadyStateDwellTimes1}. Therefore, it seems reasonable to conjecture that the eigenvalues of $\Delta_{1}^{-1}\Delta_{2}$ are located within the unit circle; however, to date, we have not provided a formal proof to the statement in Assumption \ref{As:SchurStability}. Nevertheless, since both $\Delta_{1}$ and $\Delta_{2}$ matrices are known, the validity of this assumption for any given system \eqref{Eq:DwellTimeDynamics3} can be verified easily.

\begin{lemma}
\label{Lm:StabilityOfDWellTimeDynamics1} 
Under Assumption \ref{As:SchurStability}, the equilibrium point $\bar{\tau}_{eq}$ given in Lemma \ref{Lm:SteadyStateDwellTimes1} for the affine linear system \eqref{Eq:DwellTimeDynamics3} is globally asymptotically stable (i.e., $\lim_{k\rightarrow\infty}\bar{\tau}_{k}=\bar{\tau}_{eq}$, irrespective of $\bar{\tau}_{0}$).
\end{lemma}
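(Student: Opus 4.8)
The plan is to recast the affine recursion \eqref{Eq:DwellTimeDynamics3} as a standard linear time-invariant discrete-time system and then reduce the claimed global asymptotic stability directly to the Schur stability assumed in Assumption \ref{As:SchurStability}. First I would observe that $\Delta_1 = diag(\bar{\alpha}) - \Delta_2^T$ is lower triangular with diagonal entries $\{\alpha_i : i\in\Xi\}$, each of which is strictly positive under the standing conditions (indeed $\alpha_i = \frac{B_i - A_i}{A_i} > 0$ whenever $\frac{A_i}{B_i} < 1$, which is implied by the hypotheses of Lemma \ref{Lm:SteadyStateDwellTimes1}). Hence $\Delta_1$ is invertible and \eqref{Eq:DwellTimeDynamics3} can be normalized into the form
\begin{equation}
\bar{\tau}_{k+1} = \Delta_1^{-1}\Delta_2\,\bar{\tau}_k + \Delta_1^{-1}\bar{1}_m\,\rho_{\Xi},
\end{equation}
which is an affine system $x_{k+1} = F x_k + g$ with state matrix $F = \Delta_1^{-1}\Delta_2$ and forcing term $g = \Delta_1^{-1}\bar{1}_m\,\rho_{\Xi}$.

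Next I would pass to error coordinates $\bar{e}_k \triangleq \bar{\tau}_k - \bar{\tau}_{eq}$, where $\bar{\tau}_{eq}$ is the equilibrium supplied by Lemma \ref{Lm:SteadyStateDwellTimes1}. Since $\bar{\tau}_{eq}$ is by construction a fixed point of the recursion, subtracting the identity $\bar{\tau}_{eq} = F\bar{\tau}_{eq} + g$ from the normalized update cancels the affine term $g$ exactly, leaving the purely homogeneous error dynamics
\begin{equation}
\bar{e}_{k+1} = \Delta_1^{-1}\Delta_2\,\bar{e}_k, \qquad \text{equivalently} \qquad \bar{e}_k = (\Delta_1^{-1}\Delta_2)^k\,\bar{e}_0.
\end{equation}
Finally, invoking Assumption \ref{As:SchurStability}, the matrix $\Delta_1^{-1}\Delta_2$ has spectral radius strictly less than one, so $(\Delta_1^{-1}\Delta_2)^k \rightarrow 0$ as $k\rightarrow\infty$; consequently $\bar{e}_k \rightarrow 0$ for every initial error $\bar{e}_0$, i.e. $\lim_{k\rightarrow\infty}\bar{\tau}_k = \bar{\tau}_{eq}$ irrespective of $\bar{\tau}_0$, which is precisely the asserted global asymptotic stability.

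I do not expect a substantive obstacle here, since the argument is routine once the recursion is written in standard form; the value of the lemma is that it legitimizes working directly with $\bar{\tau}_{eq}$ in the ensuing steady-state analysis. The only points meriting a line of care are (a) the invertibility of $\Delta_1$, handled above by its triangular structure, and (b) the consistency check that the fixed point $(I - F)^{-1}g$ of the normalized recursion coincides with the $\bar{\tau}_{eq}$ of Lemma \ref{Lm:SteadyStateDwellTimes1}, which follows immediately from $(I - \Delta_1^{-1}\Delta_2)^{-1}\Delta_1^{-1} = (\Delta_1 - \Delta_2)^{-1}$. Note that the invertibility of $I - F$ needed for this fixed point to exist is itself guaranteed by the Schur assumption, since $1$ cannot be an eigenvalue of a Schur-stable matrix, so no extra hypothesis beyond Assumption \ref{As:SchurStability} is required.
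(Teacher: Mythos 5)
Your proposal is correct and follows essentially the same route as the paper: normalize the recursion by $\Delta_1^{-1}$, pass to error coordinates $\bar{e}_k = \bar{\tau}_k - \bar{\tau}_{eq}$ so the affine term cancels, obtain $\bar{e}_{k+1} = \Delta_1^{-1}\Delta_2\,\bar{e}_k$, and invoke Assumption \ref{As:SchurStability} to conclude $\bar{e}_k \rightarrow 0$. Your added remarks on the invertibility of $\Delta_1$ and the consistency of the fixed point with Lemma \ref{Lm:SteadyStateDwellTimes1} are small rigor refinements the paper leaves implicit, not a different argument.
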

\emph{Proof: }
Let $\bar{e}_k = \bar{\tau}_k - \bar{\tau}_{eq}$ as the steady state error. Then, we can write $\bar{e}_{k+1} = \bar{\tau}_{k+1} - \bar{\tau}_{eq}$ and using \eqref{Eq:DwellTimeDynamics3} and Lemma \ref{Lm:SteadyStateDwellTimes1}, 
\begin{align}
    \bar{e}_{k+1} &= (\Delta_1^{-1}\Delta_2 \bar{\tau}_k + \Delta_1^{-1}\bar{1}_m \rho_{\Xi}) - (\Delta_1^{-1}\Delta_2 \bar{\tau}_{eq} + \Delta_1^{-1}\bar{1}_m \rho_{\Xi})\nonumber \\
    &= \Delta_1^{-1}\Delta_2 \bar{e}_{k}.
\end{align}
Therefore, under Assumption \ref{As:SchurStability}, all the eigenvalues of $\Delta_{1}^{-1}\Delta_{2}$ are within the unit circle. Thus, the equilibrium point $\bar{\tau}_{eq}$ given in \eqref{Eq:SteadyStateDwellTimes1} of \eqref{Eq:DwellTimeDynamics3} is globally asymptotically stable \cite{Bof2018}. (i.e., $\lim_{k\rightarrow\infty}\bar{\tau}_{k}=\bar{\tau}_{eq}$, irrespective of $\bar{\tau}_{0}$). \hfill$\blacksquare$

We now present our main theorem regarding the steady state mean cycle uncertainty \eqref{Eq:SteadyStateMeanCycleUncertainty0} of the PMN system in Fig. \ref{Fig:cycleGeometry1}.

\begin{theorem}
\label{Th:SteadyStateMeanCycleUncertainty} 
Under Assumptions \ref{As:AgentBehavior1} and \ref{As:SchurStability} with $\sum_{i=1}^{m}\frac{A_{i}}{B_{i}}<1$, the generic (single-agent) unconstrained target-cycle $\Xi$ in Fig. \ref{Fig:cycleGeometry1} achieves a steady state mean cycle uncertainty value (i.e., \eqref{Eq:SteadyStateMeanCycleUncertainty0}),
\begin{equation}
\label{Eq:SteadyStateMeanCycleUncertainty1}J_{ss}(\Xi) = \frac{1}{2}(\bar
{B}-\bar{A})^{T}\bar{\tau}_{eq},
\end{equation}
where $\bar{B} = [B_{1},B_{2}, \ldots, B_{m}]^{T},\ \bar{A} = [A_{1},A_{2},\ldots,A_{m}]^{T}$, and $\bar{\tau}_{eq}$ is given in \eqref{Eq:SteadyStateDwellTimes1}.
\end{theorem}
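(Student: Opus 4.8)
The plan is to exploit the convergence result of Lemma~\ref{Lm:StabilityOfDWellTimeDynamics1} to replace the infinite-horizon time average in \eqref{Eq:SteadyStateMeanCycleUncertainty0} by the average of each target's uncertainty over a single \emph{steady-state} tour, and then to evaluate that per-tour average directly from the triangular-wave geometry of $R_n(t)$ depicted in Fig.~\ref{Fig:cycleGraphs1}. Since Lemma~\ref{Lm:StabilityOfDWellTimeDynamics1} guarantees $\bar{\tau}_k \to \bar{\tau}_{eq}$ as $k\to\infty$, the agent's motion settles into a periodic tour of well-defined period $\Omega \triangleq \rho_{\Xi} + \bar{1}_m^T \bar{\tau}_{eq}$ (which, using \eqref{Eq:SteadyStateDwellTimes1}, equals $\rho_{\Xi}/(1-\bar{1}_m^T\bar{\beta})$). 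I would first argue that the transient tours contribute nothing to the limit in \eqref{Eq:SteadyStateMeanCycleUncertainty0}, so that $J_{ss}(\Xi)$ reduces to $\frac{1}{\Omega}\int_{\text{one steady-state tour}} \sum_{j\in\Xi} R_j(t)\,dt$.

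Next I would pin down the shape of each $R_n(t)$ over one steady-state tour. By Assumption~\ref{As:AgentBehavior1} the agent resides at target $n$ exactly until $R_n$ is driven to zero and departs at that instant; from the dynamics \eqref{Eq:TargetDynamics}, $R_n$ decreases at the constant rate $B_n-A_n$ during the dwell interval $\tau_{n,eq}$ (the $n$-th component of $\bar{\tau}_{eq}$) and grows at rate $A_n$ over the remaining time $\Omega-\tau_{n,eq}$ that the agent spends elsewhere on the cycle. Hence $R_n(t)$ is the V-shaped (triangular) wave of Fig.~\ref{Fig:cycleGraphs1} with peak value $R_n^{\max}=(B_n-A_n)\tau_{n,eq}$, equivalently $A_n(\Omega-\tau_{n,eq})$; the two expressions agree precisely because $\tau_{n,eq}=\beta_n\Omega$, which is exactly what the equilibrium \eqref{Eq:SteadyStateDwellTimes1} encodes.

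The area under this triangle over one full period is $\tfrac{1}{2}R_n^{\max}\tau_{n,eq} + \tfrac{1}{2}R_n^{\max}(\Omega-\tau_{n,eq}) = \tfrac{1}{2}R_n^{\max}\Omega$, independent of how the dwell and travel intervals split the period, so the per-tour time average of $R_n$ is simply $\tfrac{1}{2}R_n^{\max}=\tfrac{1}{2}(B_n-A_n)\tau_{n,eq}$. Summing over all $n\in\Xi$ and collecting the terms into vectors then yields $J_{ss}(\Xi)=\tfrac{1}{2}\sum_{n\in\Xi}(B_n-A_n)\tau_{n,eq}=\tfrac{1}{2}(\bar{B}-\bar{A})^T\bar{\tau}_{eq}$, which is exactly \eqref{Eq:SteadyStateMeanCycleUncertainty1}.

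I expect the only genuine subtlety to be the first step: justifying that the $T\to\infty$ average equals the steady-state per-tour average. This requires showing that the finitely many transient tours (before $\bar{\tau}_k$ settles at $\bar{\tau}_{eq}$) make a vanishing contribution as $T\to\infty$, and that the boundary effect from $T$ not being an exact integer multiple of $\Omega$ likewise vanishes; both follow because the integrand is bounded and these contributions are $O(1/T)$, but they must be stated carefully. The remaining steps are elementary area computations once the triangular steady-state profile is established from Assumption~\ref{As:AgentBehavior1} and the equilibrium of Lemma~\ref{Lm:SteadyStateDwellTimes1}.
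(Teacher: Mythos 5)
Your proof is correct and follows essentially the same route as the paper's: both invoke Lemmas \ref{Lm:SteadyStateDwellTimes1} and \ref{Lm:StabilityOfDWellTimeDynamics1} to reduce the infinite-horizon average in \eqref{Eq:SteadyStateMeanCycleUncertainty0} to the average over one steady-state tour of duration $T_{\Xi}=\rho_{\Xi}+\bar{1}_m^T\bar{\tau}_{eq}$, and then compute the per-target contribution as the area of the triangular wave in Fig.~\ref{Fig:cycleGraphs1} with base $T_{\Xi}$ and height $(B_n-A_n)\tau_{n,eq}$. The only difference is that you spell out details the paper leaves implicit (the $O(1/T)$ vanishing of transient and boundary contributions, and the consistency check $(B_n-A_n)\tau_{n,eq}=A_n(\Omega-\tau_{n,eq})$), which strengthens rather than changes the argument.
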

\begin{proof}
Under the given conditions, both Lemma \ref{Lm:SteadyStateDwellTimes1} and Lemma \ref{Lm:StabilityOfDWellTimeDynamics1} apply. Therefore, using \eqref{Eq:SteadyStateMeanCycleUncertainty0} we can write, 
\begin{align}
    J_{ss}(\Xi) = \lim_{T\rightarrow \infty} \frac{1}{T} \int_0^T \sum_{n=1}^m R_n(t)dt = \frac{1}{T_{\Xi}} \int_{\partial T_{\Xi}} \sum_{n=1}^m R_n(t)dt,
\end{align}
where, $T_{\Xi}\triangleq\rho_{\Xi}+\bar{1}_{m}^{T}\bar{\tau}_{eq}$ is the \emph{steady state tour duration} and $\partial T_{\Xi}$ is a time period corresponding to a tour occurring after achieving the steady state. This can be further simplified into 
\begin{align}
    J_{ss}(\Xi) = \sum_{n=1}^m \frac{1}{T_{\Xi}} \int_{\partial T_{\Xi}} R_n(t)dt.
\end{align}
Now, using the $R_{n}(t)$ trajectory shown in Fig. \ref{Fig:cycleGraphs1} note that when the equilibrium is achieved (as $T\rightarrow\infty\implies k\rightarrow\infty$), the final tour uncertainties will remain stationary (i.e., $R_{n,k}=R_{n,k+1}, \forall n \in\Xi$). As a result, the area under the $R_{n}(t)$ trajectory evaluated over a period $T_{\Xi}$ becomes equivalent to that of a triangle where the base is $T_{\Xi}$ and the height is $(B_{n}-A_{n})\tau_{n,\infty}, \forall n \in\Xi$. Therefore,
\begin{align}
    J_{ss}(\Xi) &= \sum_{n=1}^m \frac{1}{T_{\Xi}} \frac{1}{2} T_{\Xi} (B_n-A_n) \tau_{n,\infty}\nonumber \\
     &= \frac{1}{2} (\bar{B} - \bar{A})^T \bar{\tau}_{eq},
\end{align}
where $\bar{\tau}_\infty = \bar{\tau}_{eq}$ given in Lemma \ref{Lm:SteadyStateDwellTimes1}.
\end{proof}

Theorem \ref{Th:SteadyStateMeanCycleUncertainty} provides a means of assessing simple persistent monitoring configurations (like the one shown in Fig. \ref{Fig:cycleGeometry1}) without having to simulate them. We will next discuss the usage of Theorem \ref{Th:SteadyStateMeanCycleUncertainty} in constructing a better performing target-cycle - on the given target topology $\mathcal{G}$.

\subsection{Greedy Target-Cycle Construction}
Under Assumption \ref{As:ProblemConfigurationType1} for the given target topology $\mathcal{G}$, if $\vert \mathcal{C} \vert$ is small, Theorem \ref{Th:SteadyStateMeanCycleUncertainty} can be used to directly identify the best performing (steady state) target-cycle via brute-force search:
\begin{equation}
\Xi^{\ast}=\arg\min_{\Xi\in\mathcal{C}}J_{ss}(\Xi).
\label{Eq:OptimumUnconstrainedCycleSearch}%
\end{equation}

Since this brute-force approach becomes computationally expensive as $\vert\mathcal{C} \vert$ grows exponentially with the number of targets or edges, we propose instead a computationally efficient greedy scheme to construct a sub-optimal target-cycle (denoted as $\Xi^{\#}$) as a candidate for $\Xi^{*}$ in \eqref{Eq:OptimumUnconstrainedCycleSearch}. In this greedy scheme, each iteration search expands a current target-cycle $\Xi$ by adding an unvisited target $i \in\mathcal{V} \backslash\Xi$ to $\Xi$ ($\cdot \backslash\cdot$ is the set subtraction operation). The constructed $\Xi^{\#}\in\mathcal{C}$ is then transformed to a TCP which is used as $\Theta^{(0)}$ in \eqref{Eq:GradientDescent}. Therefore, determining the optimal target-cycle $\Xi^{*}$ is not essential at this stage as opposed to the importance of keeping the overall process of obtaining $\Theta^{(0)}$ efficient.

\paragraph*{\textbf{Estimating Finite Horizon Objective} $J_T$} 

We now define the finite horizon version of $J_{ss}(\Xi_{i})$ in \eqref{Eq:SteadyStateMeanCycleUncertainty0} as the \emph{finite horizon mean cycle uncertainty} $J_{T}(\Xi_{i})$, where 
\begin{equation}
\label{Eq:FiniteHorizonMeanCycleUncertainty}J_{T}(\Xi_{i}) = \frac{1}{T}
\int_{0}^{T} \sum_{j \in\Xi_{i}}R_{j}(t)dt.
\end{equation}
Note that if $\mathcal{V} = \Xi_{i}$, this $J_{T}(\Xi_{i})$ metric is equivalent to the mean system uncertainty metric $J_{T}$ defined in \eqref{Eq:ObjectiveFunction1}.

\paragraph*{\textbf{Contribution of a Neglected Target}} 
Formally, a \emph{neglected target} is a target that is not visited by any agent during the period $[0,T]$. As our main objective $J_{T}$ in \eqref{Eq:ObjectiveFunction1} is evaluated over a finite horizon $T$, if one or more targets are located remotely compared to the rest of the targets, then neglecting such remote targets might be better than trying to visit them. The following lemma characterizes the contribution of such a neglected target to the main objective $J_{T}$ in \eqref{Eq:ObjectiveFunction1}.

\begin{lemma}
\label{Lm:NeglectedCost} 
The contribution of a neglected target $i\in\mathcal{V}$ to the mean system uncertainty $J_{T}$ (defined in \eqref{Eq:ObjectiveFunction1}) is $\left(R_{i,0}+\frac{A_{i} T}{2}\right)  $.
\end{lemma}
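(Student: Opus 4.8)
The plan is to read off the contribution of target $i$ directly from the definition \eqref{Eq:ObjectiveFunction1}. Since $J_T$ is written there as a sum over targets of the time-averaged uncertainty, the contribution attributable to target $i$ is precisely $\frac{1}{T}\int_0^T R_i(t)\,dt$. Thus the entire task reduces to finding a closed form for $R_i(t)$ on $[0,T]$, and this follows immediately once we exploit that $i$ is neglected.

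First I would note that because no agent ever visits a neglected target $i$ during $[0,T]$, we have $N_i(t) = \sum_{a=1}^N 1_{\{s_a(t) = X_i\}} = 0$ for all $t \in [0,T]$. Substituting this into the uncertainty dynamics \eqref{Eq:TargetDynamics}, the first branch demands $A_i \leq B_i N_i(t) = 0$; since $A_i > 0$, this branch is never active, so the dynamics collapse to the single case $\dot{R}_i(t) = A_i - B_i N_i(t) = A_i$ for every $t \in [0,T]$.

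Integrating this constant-rate equation from the initial condition $R_i(0) = R_{i,0}$ gives the linear trajectory $R_i(t) = R_{i,0} + A_i t$. The contribution to $J_T$ then follows from one elementary integration:
\begin{equation}
\frac{1}{T}\int_0^T R_i(t)\,dt = \frac{1}{T}\int_0^T \left(R_{i,0} + A_i t\right)dt = R_{i,0} + \frac{A_i T}{2},
\end{equation}
which is exactly the claimed value.

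The argument is essentially routine, so no genuine obstacle arises; the only point requiring care is confirming that the zero-uncertainty (clamping) branch of \eqref{Eq:TargetDynamics} never triggers, so that $R_i(t)$ grows linearly across the whole horizon rather than being held at $0$. This is secured by the positivity of the arrival rate $A_i$ together with $N_i(t) \equiv 0$ throughout $[0,T]$.
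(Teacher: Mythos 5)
Your proposal is correct and follows essentially the same route as the paper: isolate the term $\frac{1}{T}\int_0^T R_i(t)\,dt$ from the sum defining $J_T$, observe that a neglected target has $\dot{R}_i(t) = A_i$ throughout $[0,T]$, and integrate the resulting linear trajectory from $R_{i,0}$. Your explicit check that the clamping branch of \eqref{Eq:TargetDynamics} never activates (since $N_i(t) \equiv 0$) is a small point of added rigor that the paper leaves implicit, but it does not change the argument.
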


\begin{proof}
Consider the original problem configuration where we had $M$ targets: $\mathcal{V} = \{1,2,\ldots,M\}$. Under this setting, the mean system uncertainty $J_T$ defined in \eqref{Eq:ObjectiveFunction1} can be decomposed as, 
\begin{align}
    J_T &= \frac{1}{T}\int_0^T \sum_{j\in\mathcal{V}} R_j(t)dt \nonumber \\
    &= \frac{1}{T}\int_0^T \sum_{j\in\mathcal{V}\backslash \{i\}} R_j(t)dt +\frac{1}{T}\int_0^T R_i(t)dt. \nonumber
\end{align}
Therefore, the second term above represents the contribution of the target $i$ to the overall objective $J_T$. However, since target $i$ is not being visited by an agent during $t \in [0,T]$, $\dot{R}_i(t) = A_i \ \forall t \in [0,T]$. Also note that the initial target uncertainty of $i$ is $R_{i,0}$. Therefore, the contribution of target $i$ can be simplified as 
\begin{align*}
    \frac{1}{T}\int_0^T R_i(t)dt &= \frac{1}{T}\int_0^T R_{i,0} + A_i tdt\\
    &= \left(R_{i,0} + \frac{A_i T}{2} \right).
\end{align*}
\end{proof}

\begin{assumption}
\label{As:EstimationError} 
For any target-cycle $\Xi\in\mathcal{C}$, the difference between the steady state mean cycle uncertainty $J_{ss}(\Xi)$ (defined in \eqref{Eq:SteadyStateMeanCycleUncertainty0}) and the finite horizon mean cycle uncertainty $J_{T}(\Xi)$ (defined in \eqref{Eq:FiniteHorizonMeanCycleUncertainty}) is bounded by some finite constant $K_{e} \in\mathbb{R}_{\geq0}$, i.e., $\vert J_{ss}(\Xi) - J_{T}(\Xi) \vert < K_{e}$.
\end{assumption}

The greedy target-cycle construction scheme uses the $J_{ss}(\cdot)$ metric defined in \eqref{Eq:SteadyStateMeanCycleUncertainty0} to compare the performance of different target-cycles as it can be evaluated efficiently using Theorem \ref{Th:SteadyStateMeanCycleUncertainty}. However, since the original objective $J_{T}$ in \eqref{Eq:ObjectiveFunction1} is evaluated over a finite horizon $T$, the $J_{T}(\cdot)$ metric defined in \eqref{Eq:FiniteHorizonMeanCycleUncertainty} is more appropriate to evaluate (and compare) different target-cycle performances. The above assumption states that $J_{T}(\cdot)$ will always lie within $J_{ss}(\cdot) \pm K_{e}$ and we point out that $K_{e}$ is small whenever:
(\romannum{1}) the steady state tour duration $T_{\Xi}$ and the finite horizon $T$ is such that $T \gg T_{\Xi}$, and 
(\romannum{2}) the dynamics of the steady state error of \eqref{Eq:DwellTimeDynamics3} are faster (i.e., according to Lemma \ref{Lm:StabilityOfDWellTimeDynamics1}, when $\frac{A_{i}}{B_{i}} \ll1$).

\paragraph*{\textbf{Target-Cycle Expansion Operation (TCEO)}}
Recall that we used the notation $\Xi_i = \{i_1,i_2,\ldots,i_m\}$ to represent a generic target-cycle and $\xi_i = \{(i_m,i_1),(i_1,i_2),\ldots,(i_{m-1},i_m)\}$ to represent the respective sequence of edges in the target-cycle $\Xi_i$. 

Omitting the subscript $i$ (for notational convenience) consider a situation where we have a target-cycle $\Xi = \{1,2,\ldots,m\}$ with its respective edge set $\xi = \{(m,1),(1,2),\ldots,(m-1,m)\}$. As shown in Fig. \ref{Fig:basicCycleExpanding}, in order to expand $\Xi$ so that it includes one more target $i$ chosen from the set of neglected targets $\mathcal{V}\backslash\Xi$, we have to: 
(\romannum{1}) replace one edge $(n-1,n)$ chosen from $\xi$ with two new consecutive edges $(n-1,i),(i,n) \in\mathcal{E}$ and
(\romannum{2}) insert the neglected target $i$ into $\Xi$ between targets $n-1$ and $n$. 
Whenever $\vert\mathcal{V} \backslash\Xi\vert>0$, the existence of a such $i$ and $(n-1,n)$ is guaranteed by the bi-triangularity condition in Assumption \ref{As:ProblemConfigurationType1}. Upon executing these two operations, a new (expanded) target-cycle $\Xi^{\prime}$ (and $\xi^{\prime}$) is attained as shown in Fig. \ref{Fig:basicCycleExpanding}. The following theorem derives the \emph{marginal gain} (denoted as $\Delta J_{T}(i\vert\xi,(n-1,n))$) in the main objective $J_{T}$ in \eqref{Eq:ObjectiveFunction1} due to such a target-cycle expansion in terms of $J_{ss}(\cdot)$ in \eqref{Eq:SteadyStateMeanCycleUncertainty0}.   

\begin{figure}[!b]
    \centering
    \includegraphics[width=3.2in]{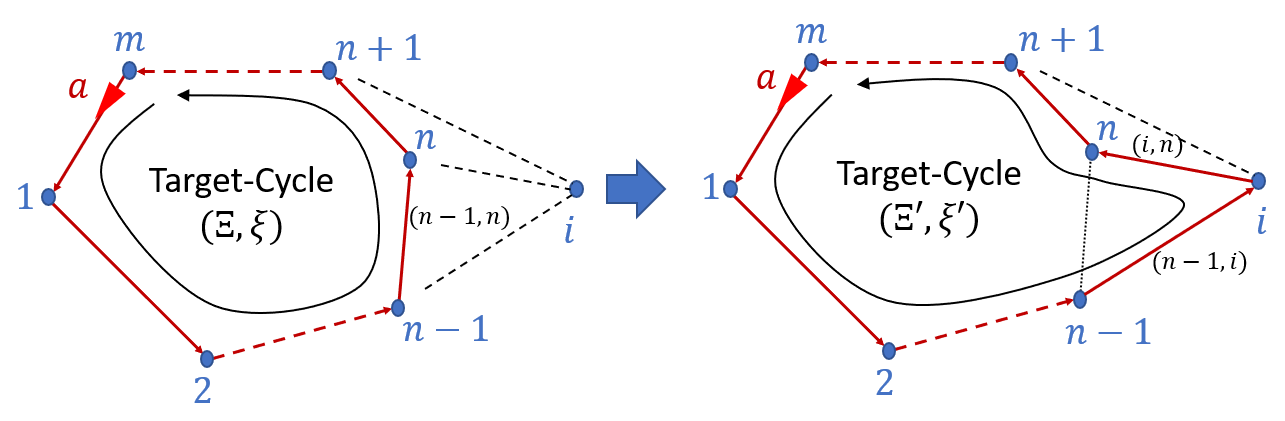}
    \caption{A basic target-cycle expanding operation (TCEO).}
    \label{Fig:basicCycleExpanding}
\end{figure}

\begin{theorem}
\label{Th:MarginalGain1} 
Under Assumptions \ref{As:ProblemConfigurationType1}, \ref{As:AgentBehavior1} and \ref{As:EstimationError}, the marginal gain in the mean system uncertainty $J_{T}$ (defined in \eqref{Eq:ObjectiveFunction1}) due to the basic target-cycle expansion operation (shown in Fig. \ref{Fig:basicCycleExpanding}) is
\begin{equation}
\label{Eq:MarginalGain1}\Delta J_{T}(i \vert\xi, (n-1,n)) = \left(  R_{i,0} +
\frac{A_{i} T}{2} \right)  + J_{ss}(\Xi) - J_{ss}(\Xi^{\prime}).
\end{equation}
Here, $\Xi^{\prime}$ is the expanded cycle and $J_{ss}(\cdot)$ is given in Theorem \ref{Th:SteadyStateMeanCycleUncertainty}. The associated estimation error of this term is $\pm2K_{e}$.
\end{theorem}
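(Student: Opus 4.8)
The plan is to compute the marginal gain directly as the difference $\Delta J_{T}(i\vert\xi,(n-1,n)) = J_{T}^{\text{(before)}} - J_{T}^{\text{(after)}}$, where $J_{T}^{\text{(before)}}$ and $J_{T}^{\text{(after)}}$ denote the mean system uncertainty \eqref{Eq:ObjectiveFunction1} evaluated, respectively, under the cycle $\Xi$ (with target $i$ neglected) and under the expanded cycle $\Xi'=\Xi\cup\{i\}$ produced by the TCEO. The key structural fact I would exploit is that $J_{T}$ is \emph{additive over targets}, so I first decompose it into a monitored part (the targets lying on the active cycle, captured by the finite-horizon cycle metric $J_{T}(\cdot)$ of \eqref{Eq:FiniteHorizonMeanCycleUncertainty}) and a neglected part (the remaining targets of $\mathcal{V}$).

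First I would write, using the additivity of $\sum_{j\in\mathcal{V}}$ inside the integral,
\begin{equation*}
J_{T}^{\text{(before)}} = J_{T}(\Xi) + \frac{1}{T}\int_{0}^{T} R_{i}(t)\,dt + \frac{1}{T}\int_{0}^{T} \sum_{j\in\mathcal{V}\backslash(\Xi\cup\{i\})} R_{j}(t)\,dt,
\end{equation*}
and, since after the expansion target $i$ joins the cycle,
\begin{equation*}
J_{T}^{\text{(after)}} = J_{T}(\Xi') + \frac{1}{T}\int_{0}^{T} \sum_{j\in\mathcal{V}\backslash(\Xi\cup\{i\})} R_{j}(t)\,dt.
\end{equation*}
The crucial point is that the contribution of the targets in $\mathcal{V}\backslash(\Xi\cup\{i\})$ is identical in both expressions: these targets are neglected in both scenarios, so their trajectories are unaffected by the insertion of $i$ and the common term cancels upon subtraction. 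What remains is $\Delta J_{T} = \bigl(\tfrac{1}{T}\int_{0}^{T} R_{i}(t)\,dt\bigr) + J_{T}(\Xi) - J_{T}(\Xi')$.

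Next I would eliminate the three remaining pieces. The neglected-target integral is handled exactly by Lemma \ref{Lm:NeglectedCost}, giving $\tfrac{1}{T}\int_{0}^{T} R_{i}(t)\,dt = R_{i,0}+\tfrac{A_{i}T}{2}$. For the two cycle terms I invoke Assumption \ref{As:EstimationError}, which bounds $\vert J_{ss}(\cdot)-J_{T}(\cdot)\vert<K_{e}$; replacing $J_{T}(\Xi)$ by $J_{ss}(\Xi)$ and $J_{T}(\Xi')$ by $J_{ss}(\Xi')$ (each evaluable via Theorem \ref{Th:SteadyStateMeanCycleUncertainty}) yields exactly
\begin{equation*}
\Delta J_{T}(i\vert\xi,(n-1,n)) = \left(R_{i,0}+\frac{A_{i}T}{2}\right) + J_{ss}(\Xi) - J_{ss}(\Xi'),
\end{equation*}
and, since the steady-state substitution is applied twice, the two $\pm K_{e}$ errors add in the worst case to give the claimed $\pm 2K_{e}$ estimation error.

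There is little genuine difficulty here: the argument is essentially careful bookkeeping of which contributions cancel and which must be approximated. The one point that warrants attention is justifying the cancellation of the common neglected block, i.e.\ arguing that inserting $i$ into the cycle leaves the trajectories $R_{j}(t)$ for $j\notin\Xi'$ genuinely unchanged; this follows because a single agent's presence at the cycle targets does not alter the free growth $\dot{R}_{j}=A_{j}$ of any off-cycle target in \eqref{Eq:TargetDynamics}. I would also note that Assumption \ref{As:ProblemConfigurationType1} (bi-triangularity) is what guarantees the TCEO is well defined, so that the required edges $(n-1,i),(i,n)\in\mathcal{E}$ exist and the expanded cycle satisfies $\Xi'\in\mathcal{C}$, making the application of Theorem \ref{Th:SteadyStateMeanCycleUncertainty} to $\Xi'$ legitimate; this feasibility enters implicitly rather than in the arithmetic of the gain itself.
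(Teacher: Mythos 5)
Your proposal is correct and follows essentially the same route as the paper's own proof: compute the before/after costs, use Lemma \ref{Lm:NeglectedCost} for the neglected target's contribution $\left(R_{i,0}+\frac{A_{i}T}{2}\right)$, and apply Assumption \ref{As:EstimationError} twice to replace $J_{T}(\Xi)$ and $J_{T}(\Xi^{\prime})$ by $J_{ss}(\Xi)$ and $J_{ss}(\Xi^{\prime})$, yielding the $\pm 2K_{e}$ tolerance. Your explicit cancellation of the remaining neglected targets' common contribution (and the remark on bi-triangularity ensuring the TCEO is feasible) is slightly more careful bookkeeping than the paper's, but it is the same argument.
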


\begin{proof}
When target $i$ is neglected, Lemma \ref{Lm:NeglectedCost} gives the mean system uncertainty as
$$\left(R_{i,0} + \frac{A_i T}{2} \right) + J_{T}(\Xi).$$
After the target-cycle expansion, the mean system uncertainty is $J_{T}(\Xi^{\prime})$ (Note that now $i\in\Xi^{\prime}$ and $J_{T}(\cdot)$ is defined in \eqref{Eq:FiniteHorizonMeanCycleUncertainty}). Therefore, the gain in mean system uncertainty is 
$$\left(R_{i,0} + \frac{A_i T}{2} \right) + J_{T}(\Xi) - J_{T}(\Xi').$$
Now, adding and subtracting a $(J_{ss}(\Xi)-J_{ss}(\Xi^{\prime}))$ term and applying Assumption \ref{As:EstimationError} twice (for $J_{T}(\Xi), \ J_{T}(\Xi^{\prime})$ terms) shows that the above ``gain'' can be estimated by the marginal gain expression given in \eqref{Eq:MarginalGain1} (with a tolerance of $\pm2K_{e}$).
\end{proof}

\paragraph*{\textbf{Greedy Algorithm}}
Now, we propose the following greedy scheme (Algorithm \ref{Alg:GreedyTargetCycleConstruction1}) as a means of constructing a sub-optimal target-cycle for \eqref{Eq:OptimumUnconstrainedCycleSearch} under the given target topology. It starts from searching for the best target-cycle of length 2 (i.e. $\Xi \ni \vert \Xi \vert = 2$). The search space length is $\vert \mathcal{E} \vert$ and the obtained solution is then used as the initial target-cycle. Next, the current target-cycle is iteratively expanded by adding an external targets sequentially. The step 6 of algorithm decides the target to be added (and the edge to be removed) via brute-force search. In the $k$\textsuperscript{th} target-cycle expansion step, if $\mathcal{G}$ is fully connected, the search space length is $(k+1)\times (\vert \mathcal{V} \vert - k - 1)$, where $(k+1)$ is the number of edges in the current target-cycle and $(\vert \mathcal{V} \vert - k - 1)$ is the number of remaining neglected targets available. Thus, the search space size remains moderate through greedy iterations.

\begin{algorithm*}[t]
\caption{Greedy target-cycle construction for \eqref{Eq:OptimumUnconstrainedCycleSearch}.}
\begin{algorithmic}[1]
\State \textbf{Input}: Target topology $\mathcal{G}=(\mathcal{V},\mathcal{E})$, where, $\mathcal{V} = \{1,2,\cdots,M\}$, and, $\mathcal{E} \subseteq \{(j,l): j,l \in \mathcal{V}\}.$
\State \textbf{Output}: A sub-optimal target-cycle $\Xi^\#$ (and $\xi^\#$) for \eqref{Eq:OptimumUnconstrainedCycleSearch}.
\State $(j^*,l^*) := \substack{\arg\min \\ (j,l) \in \mathcal{E}} \left[ J_{ss}(\{j,l\}) \right]$ 
\Comment{Best 2-Target-cycle to cover}
\State $\Xi := \{j^*,l^*\}$,  
$\xi := \{(j^*,l^*),(l^*,j^*)\}$ 
\Comment{initial target-cycle}
\Do 
\Comment{Best way to expand $\Xi$}
    \State $[val,\ (i^*,(j^*,l^*)] :=
    \substack{\arg\max\\
    (i,(j,l)):i \in \mathcal{V} \backslash \Xi, \\ (j,l) \in \xi,\  (j,i)\in\mathcal{E},\ (i,l)\in\mathcal{E}} \left[ \Delta J_T(i \vert \xi,(j,l)) \right]$ 
    \Comment{The edge to remove and the target to add} 
    \State \textbf{Replace} $(j^*,l^*) \in \xi$ with $\{(j^*,i^*),(i^*,l^*)\}$
    \Comment{Updating $\xi$}
    \State \textbf{Insert} $i^*$ into $\Xi$ between $j^*$ and $l^*$.
    \Comment{Updating $\Xi$}
\doWhile{$val \geq 0$} \Comment{Until marginal gain become negative}
\State $\Xi^\# := \Xi;\ \xi^\#:= \xi$; \textbf{Return}; 
\end{algorithmic}\label{Alg:GreedyTargetCycleConstruction1}
\end{algorithm*}

\paragraph*{\textbf{TSP Inspired Target-Cycle Refinements}}

The underlying idea behind Algorithm \ref{Alg:GreedyTargetCycleConstruction1} can be seen as a heuristic tour construction approach for an initial solution to the Traveling Salesman Problem (TSP) on $\mathcal{G}$ \cite{Blazinskas2011}. However, there are two fundamental differences in our problem setup compared to a TSP: both the marginal gain function in Theorem \ref{Th:MarginalGain1} and the tour cost function in Theorem \ref{Th:SteadyStateMeanCycleUncertainty} are much more complex. In a TSP, these two functions would be simple distance-based metrics. Moreover, compared to TSPs, in persistent monitoring, a cost value, i.e., $J_{ss}(\cdot)$, cannot be assigned to individual edges of the topology, but can only be assigned to target-cycles using Theorem \ref{Th:SteadyStateMeanCycleUncertainty}.

This compatibility with TSP raises an important question: ``Can't we adopt heuristic tour construction methods used in TSP to replace Algorithm \ref{Alg:GreedyTargetCycleConstruction1}?'' The answer is: It is not possible, because, in persistent monitoring, we cannot assign a cost value to individual edges of the topology separately, we only can assign a cost (using Theorem \ref{Th:SteadyStateMeanCycleUncertainty}) for target-cycles.   

However, once we constructed a sub-optimal target-cycle (Let us denote by $\Xi^\#$) using Algorithm \ref{Alg:GreedyTargetCycleConstruction1}, we can adopt local search (also called local perturbation or tour improvement) techniques introduced in TSP literature. Specifically, we use the conventional \emph{2-Opt} and \emph{3-Opt} techniques \cite{Nilsson2003,Blazinskas2011} to further refine the obtained $\Xi^\#$. The main idea behind a step of these methods is to slightly perturb (See Fig. \ref{Fig:2Opt3Opt}) the shape of $\Xi^\#$ (say into $\Xi'$) and then to see whether $J_{ss}(\Xi')<J_{ss}(\Xi^\#)$. If so, the update $\Xi^\# := \Xi'$ is used.     

\begin{figure}[!h]
    \centering
    \includegraphics[width=3in]{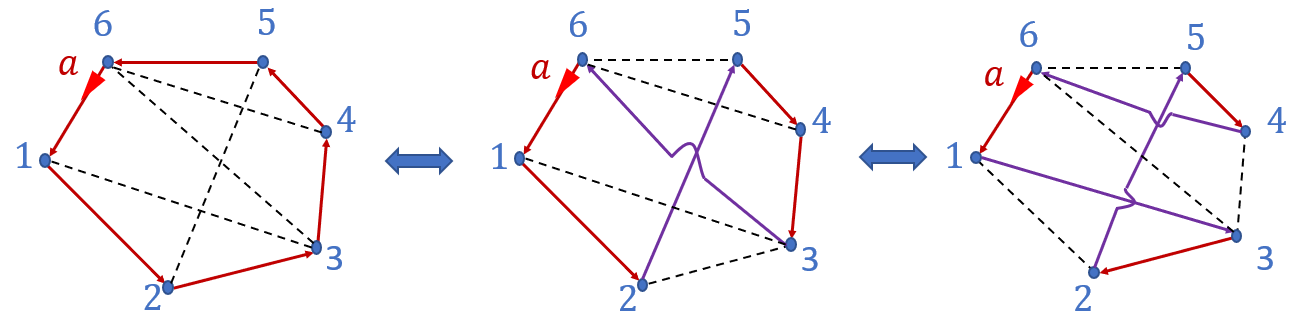}
    \caption{A target-cycle and a possible 2-Opt move and a 3-Opt move (left to right).}
    \label{Fig:2Opt3Opt}
\end{figure}

\subsection{The Initial Threshold Control Policy: $\mathrm{\Theta}^{(0)}$}
Let us denote the obtained refined sub-optimal target-cycle as $\Xi^R$ (and $\xi^R$). Now, we need to convert $\Xi^R$ into a set of threshold control policy (TCP) values: $\Theta^{(0)}$ so that it can be used in \eqref{Eq:GradientDescent} as the initial condition. 

First, note that under Assumption \ref{As:ProblemConfigurationType1}, we only have one agent (i.e., $\mathcal{A} = \{a\}$). Therefore, essentially, $\Theta = \Theta^a \in \R^{M \times M}$. Then, notice that the TCP values in $\Theta^{a(0)}$ should guide the agent $a$ according to the assumptions made in Assumption \ref{As:AgentBehavior1}. Therefore, we can directly deduce a requirement: diagonal entries of $\Theta^{a(0)}$ should be $0$. This will make sure that the agent $a$ will stay till $R_i(t)=0$ when it visited the target $i \in \Xi^R$ (See \eqref{Eq:DiscreteEventSystem}). Next, we need to make sure that under the TCP values $\Theta^{a(0)}$ the agent will follow the intended cyclic trajectory $\Xi^R$. This can be achieved by setting $(i,j)$\textsuperscript{th} entry of $\Theta^{a(0)}$ matrix to $0$ for all $(i,j) \in \xi^R$. All the other (valid) entries of $\Theta^{a(0)}$ matrix should be chosen as $P \in \R$ where $P > T_{\Xi^R}\max_i A_i$ so that agent will remain in the same target-cycle $\Xi^R$. Algorithm \ref{Alg:initialThresholPolicyGeneration} outlines this process and An example input/output for this process corresponding to the target topology in Fig. \ref{Fig:thresholds} is shown in Fig. \ref{fig:initialThresholPolicy}.

\begin{algorithm}[t]
\caption{The algorithm used to generate the initial TCP values $\Theta^{a(0)}$ from the obtained target-cycle $\Xi^R, \xi^R$.}
\begin{algorithmic}[1]
\State Input: Graph $\mathcal{G}=(\mathcal{V},\mathcal{E})$, and the target-cycle $\Xi^R, \xi^R$. 
\State Output: $\Theta^{a(0)}$ \Comment{TCP values for agent $a \in \mathcal{A}$}
\State $\Theta^1 := \mb{0}\in \R^{M \times M}$
\Comment{Placeholders for $\theta_{ij}^{a(0)}$ values}
\For{$i$ in $\Xi^R$}
    \For{$j$ in $\mathcal{V}$}
        \If{$i==j$ or $(i,j) \in \xi^R$}
            \State $\Theta^1[i][j] = 0$;
        \ElsIf{$(i,j) \in \mathcal{E}$}
            \State $\Theta^1[i][j] = P$; 
        \Else
            \State $\Theta^1[i][j] = \infty$;
        \EndIf
    \EndFor
\EndFor
\State $\Theta^{a(0)}:=\Theta^1$; \textbf{Return}; 
\end{algorithmic}\label{Alg:initialThresholPolicyGeneration}
\end{algorithm}

\begin{figure}[!h]
    \centering
    \includegraphics[width=3in]{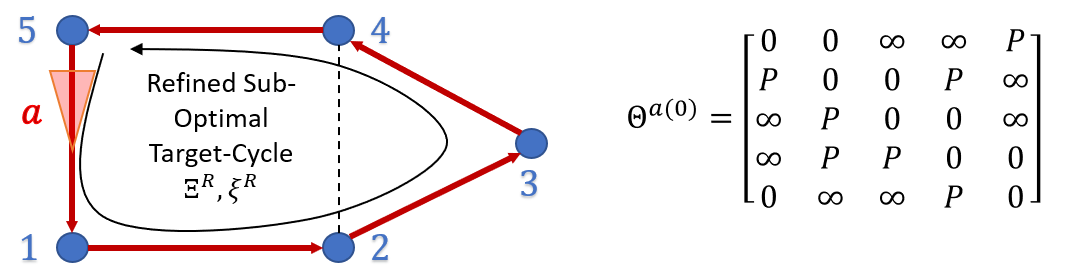}
    \caption{The generated threshold matrix (right) for the refined sub-optimal target-cycle $\Xi^R$ shown (left).}
    \label{fig:initialThresholPolicy}
    \vspace{-5mm}
\end{figure}

\subsection{Simulation Results}
Fig. \ref{Fig:UnconstrainedCycleGenerationProcess} (a)$\rightarrow$(d) shows the intermediate cycles generated by the greedy sub-optimal cycle construction process given in Algorithm \ref{Alg:GreedyTargetCycleConstruction1} when applied for the SASE1 problem configuration (first saw in Fig. \ref{Fig:SASE1RandomInitialization}). The target-cycle shown (as a red contour) in Fig. \ref{Fig:UnconstrainedCycleGenerationProcess} (d) has a $J_{ss}$ value of $135.7$. A subsequently identified profitable refinement step and its final result is shown in Fig. \ref{Fig:UnconstrainedCycleGenerationProcess} (e) and (f) respectively. The $J_{ss}$ value of the target-cycle shown in Fig. \ref{Fig:UnconstrainedCycleGenerationProcess} (f) is $128.7$.

The identified target-cycle (say $\Xi^R$) is then converted to the respective TCP using Algorithm \ref{Alg:initialThresholPolicyGeneration}. Fig. \ref{Fig:SASE1UnconstrainedCycleInitialization}(b) shows that the target-cycle $\Xi^R$ has a $J_T$ value of $121.6$ which cannot be further improved using the gradient steps \eqref{Eq:GradientDescent}. To ensure $\Xi^R$ is a local optimal, after $100$ iterations (at $l=100$), the derived TCP $\Theta^{(0)}$ is randomly perturbed. Then it can be seen that $\Theta^{(l)}$ converges back to the same initial TCP found (with $J_T=121.6$). It is important to note that this solution is better than the best TCP obtained with a random initialization of $\Theta^{(0)}$ (shown in Fig \ref{Fig:SASE1RandomInitialization}), by $+7.6\ (5.88\%)$.

In order to highlight the importance of gradient steps, consider the new single-agent simulation example (SASE2) shown in Fig. \ref{Fig:SASE2RandomInitialization}. In there, when the TCP $\Theta^{(0)}$ is selected randomly, the gradient steps have converged to $J_T = 651.3$. Now, Fig. \ref{Fig:SASE2UnconstrainedCycleInitialization}(a) shows the performance of the TCP given by the identified refined sub-optimal greedy cycle (obtained using Algorithm \ref{Alg:GreedyTargetCycleConstruction1} and \ref{Alg:initialThresholPolicyGeneration}). As the usual next step, when gradient steps are used \eqref{Eq:GradientDescent}, compared to SASE1, we can now observe a further improvement in $J_T$ (See Fig. \ref{Fig:SASE2UnconstrainedCycleInitialization}(b) and (c)) which leads to a TCP $\Theta^*$ with $J_T = 567.0$. Therefore, the overall improvement achieved from deploying the proposing initialization technique is $+84.3\ (12.9\%)$. The main difference between the solutions in Fig. \ref{Fig:SASE2UnconstrainedCycleInitialization}(a) and (b) is that in the former one, agent avoids visiting the target $4$ and strictly follows target-cycle shown in red color, whereas in the latter one, gradient descent steps have updated the TCP such that the agent trajectory now includes the target $4$.

\begin{figure*}[!h]
     \centering
     \begin{subfigure}[b]{0.3\columnwidth}
         \centering
         \includegraphics[width = \textwidth]{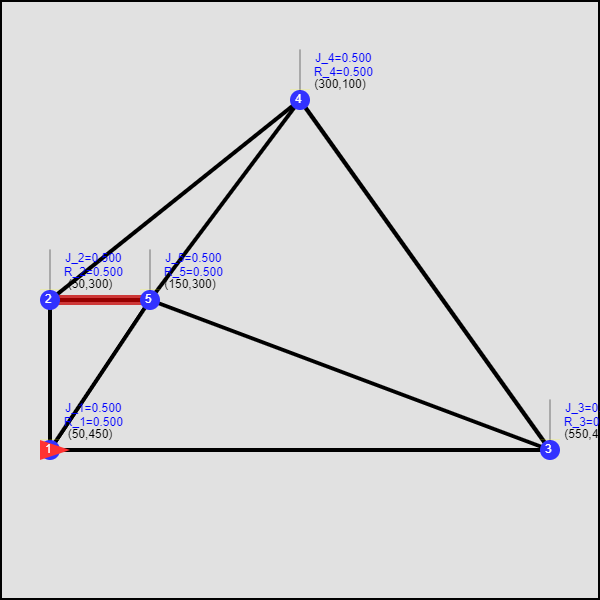}
         \caption{Greedy step 1}
         
     \end{subfigure}
     \hfill
     \begin{subfigure}[b]{0.3\columnwidth}
         \centering
         \includegraphics[width = \textwidth]{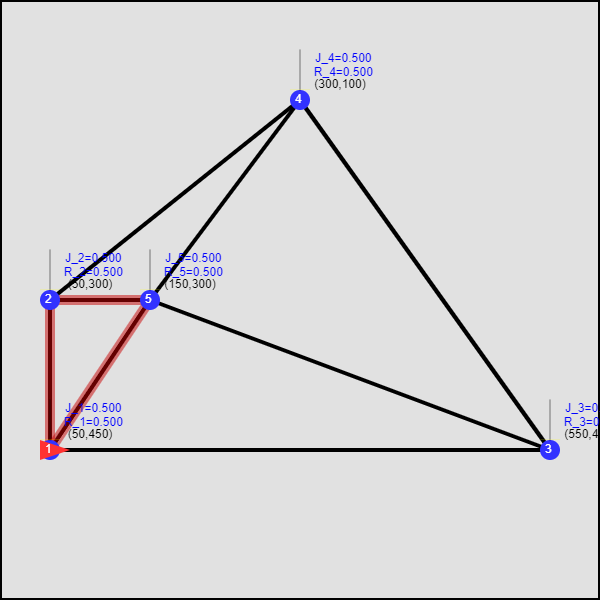}
         \caption{Greedy step 2}
         
     \end{subfigure} 
     \hfill
     \begin{subfigure}[b]{0.3\columnwidth}
         \centering
         \includegraphics[width = \textwidth]{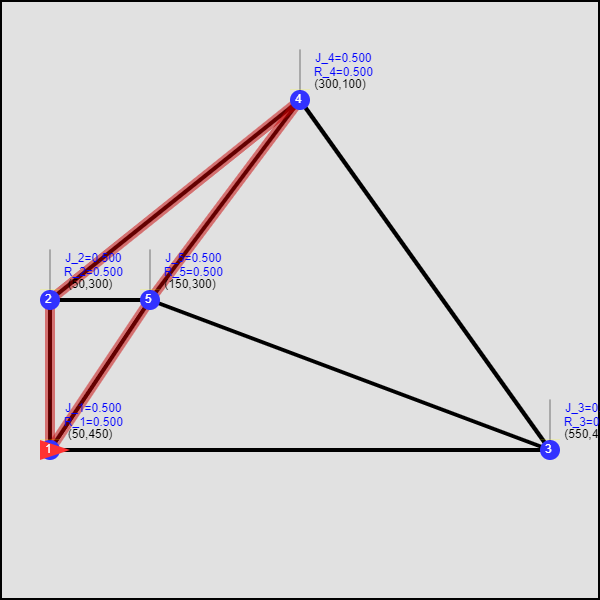}
         \caption{Greedy step 3}
         
     \end{subfigure} 
     \hfill
     \begin{subfigure}[b]{0.3\columnwidth}
         \centering
         \includegraphics[width = \textwidth]{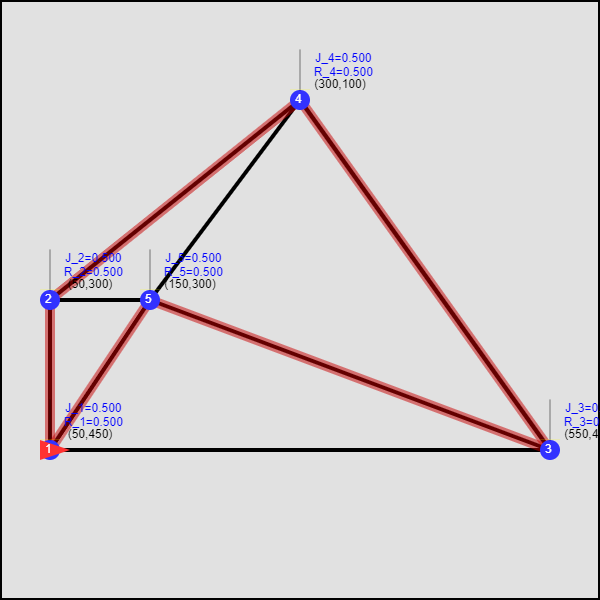}
         \caption{Greedy step 4:$\Xi^\#$}
         
     \end{subfigure} 
     \hfill
     \begin{subfigure}[b]{0.3\columnwidth}
         \centering
         \includegraphics[width = \textwidth]{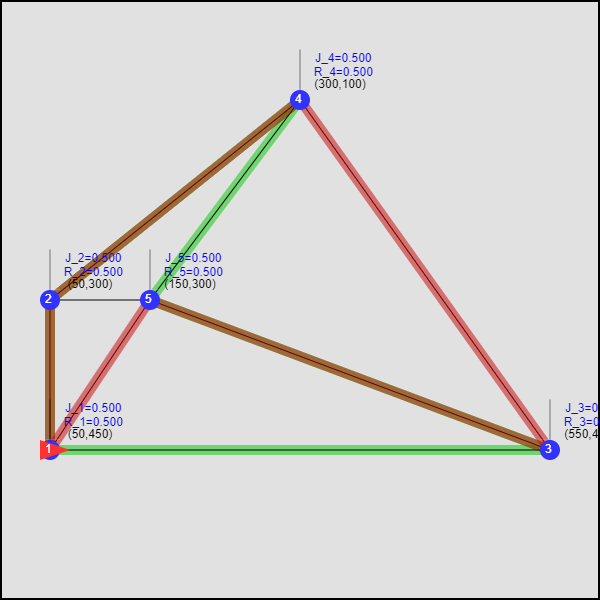}
         \caption{A 2-Opt Step}
         
     \end{subfigure} 
     \hfill
     \begin{subfigure}[b]{0.3\columnwidth}
         \centering
         \includegraphics[width = \textwidth]{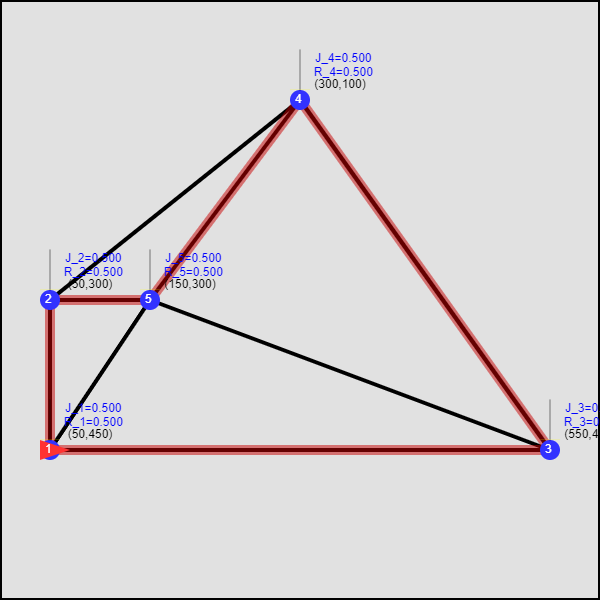}
         \caption{Refined Cycle:$\Xi^R$}
         
     \end{subfigure} 
    \caption{Greedy target-cycle construction iterations (by Algorithm \ref{Alg:GreedyTargetCycleConstruction1}) and a profitable refinement step (a 2-Opt one) observed for the target topology of SASE1.}
    \label{Fig:UnconstrainedCycleGenerationProcess}
\end{figure*}

\begin{figure}[!h]
     \centering
     \begin{subfigure}[b]{0.35\columnwidth}
         \centering
         \includegraphics[width = \textwidth]{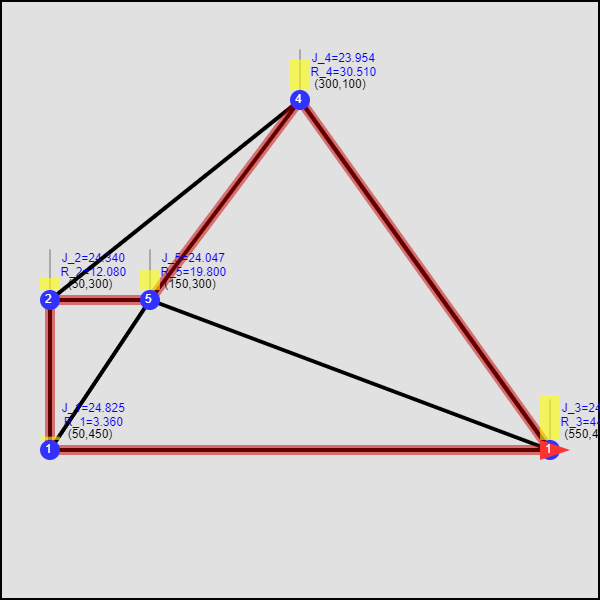}
         \caption{Config. at $t=T$.}
         
     \end{subfigure}
     \hfill
     \begin{subfigure}[b]{0.6\columnwidth}
         \centering
         \includegraphics[width = \textwidth]{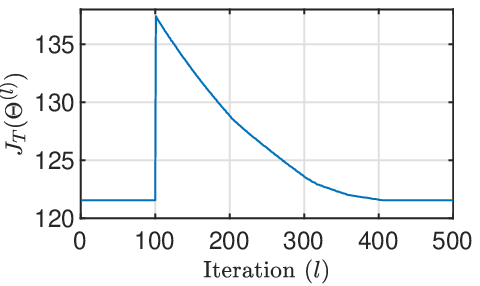}
         \caption{Cost vs iterations plot.}
         
     \end{subfigure}
    \caption{SASE1: The TCP $\Theta^{(0)}$ given by the identified cycle $\Xi^R$ (the red trace in (a)) shows local optimality. At $l=100, \Theta^{(l)}$ is randomly perturbed. Yet, converges back to the initial TCP. Cost $J_T=121.6$ (Improvement $=+7.6$ compared to Fig. \ref{Fig:SASE1RandomInitialization}).}
    \label{Fig:SASE1UnconstrainedCycleInitialization}
\end{figure}

\begin{figure}[!h]
     \centering
     \begin{subfigure}[b]{0.35\columnwidth}
         \centering
         \includegraphics[width = \textwidth]{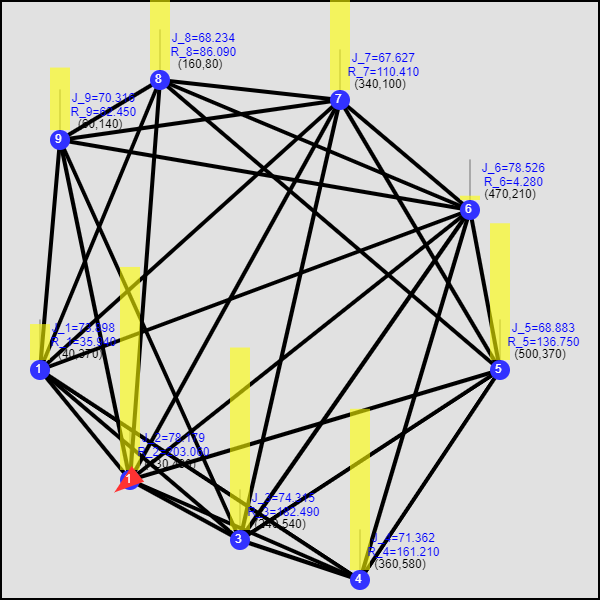}
         \caption{Config. at $t=T$.}
         
     \end{subfigure}
     \hfill
     \begin{subfigure}[b]{0.6\columnwidth}
         \centering
         \includegraphics[width = \textwidth]{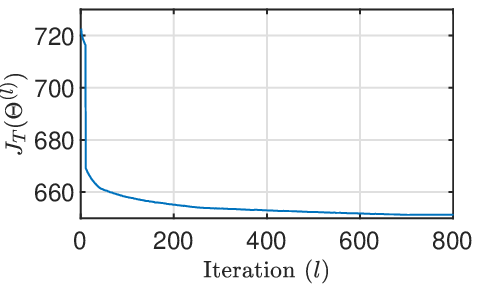}
         \caption{Cost vs iterations plot.}
         
     \end{subfigure}
    \caption{Single agent simulation example 2 (SASE2): Starting with a random $\Theta^{(0)}$, converged to a TCP with the cost $J_T = 651.3$.}
    \label{Fig:SASE2RandomInitialization}
\end{figure}

\begin{figure}[!h]
     \centering
     \begin{subfigure}[b]{0.3\columnwidth}
         \centering
         \includegraphics[width = \textwidth]{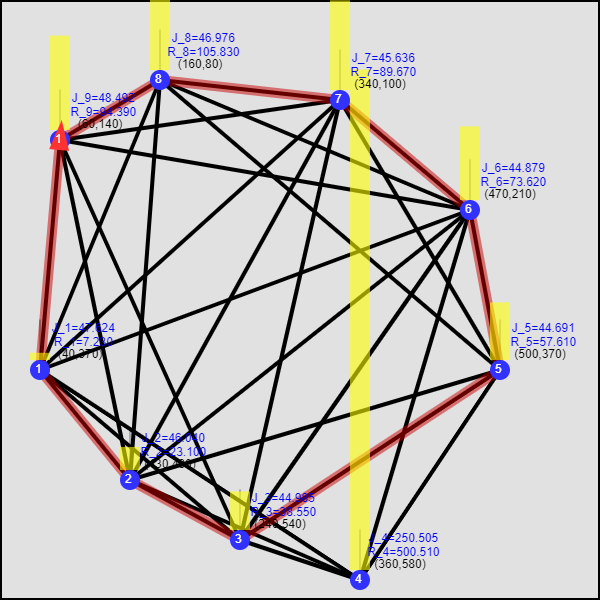}
         \caption{$l=0$, $t=T$.}
         
     \end{subfigure}
     \hfill
     \begin{subfigure}[b]{0.3\columnwidth}
         \centering
         \includegraphics[width = \textwidth]{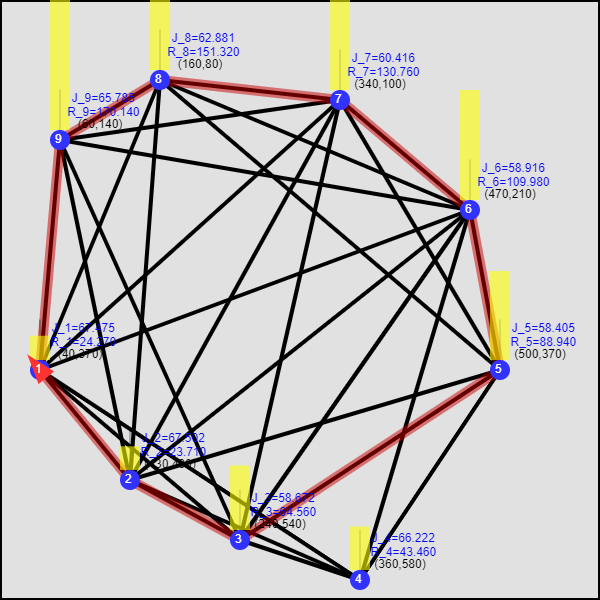}
         \caption{$l=100$, $t=T$.}
         
     \end{subfigure}
     \hfill
     \begin{subfigure}[b]{0.35\columnwidth}
         \centering
         \includegraphics[width = \textwidth]{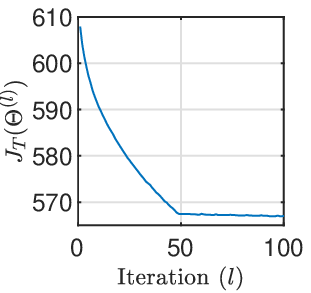}
         \caption{Cost vs iterations.}
         
     \end{subfigure}
    \caption{SASE2: The TCP $\Theta^{(0)}$ given by the identified cycle $\Xi^R$ (the red trace in (a),(b)) with cost $J_T = 607.9$ (improvement $=+43.4$ compared to Fig. \ref{Fig:SASE2RandomInitialization}) is further improved by the IPA based gradient updates \eqref{Eq:GradientDescent}. Final cost $J_T=567.0$ (Improvement $=+40.9$ compared to (a)).}
    \label{Fig:SASE2UnconstrainedCycleInitialization}
\end{figure}

\section{Single-Agent PMN Solution on Sparse (Generic) Graphs}\label{Sec:SparseGraphsSingleAgent}

According to Assumption \ref{As:ProblemConfigurationType1}, the work presented so far assumes the underlying target topology to be bi-triangular (See Def. \ref{Def:BiTriangular}). Under a such setting, we have discussed on how to: (\romannum{1}) analyze, (\romannum{2}) iteratively construct, (\romannum{3}) refine, and, (\romannum{4}) transform (to a set of TCP values), unconstrained target-cycles. However, if the target topology is sparse, the proposed target-cycle construction method can fail. 

This failure (if occurred) originates from the step 6 of the Algorithm \ref{Alg:GreedyTargetCycleConstruction1} - with brute force search operation failing due to having a null feasible search space. To illustrate this, consider the two half-constructed target-cycles shown in Fig. \ref{Fig:CycleConstructionFailure}. At this stage, either of those cycles cannot be expanded as there are no new edges which we can add to the current cycle (i.e., to $\xi$) so that those new edges connects any one of the remaining neglected targets into the current cycle (See Fig. \ref{Fig:basicCycleExpanding}). Further, note that if the graphs shown in Fig. \ref{Fig:CycleConstructionFailure} respectively had the edges $(4,2)$ and  $(4,5)$, this error does not occur. Hence the importance of having a bi-triangular graph for the execution of Algorithm \ref{Alg:GreedyTargetCycleConstruction1} is clear. 

One obvious approach to overcome this assumption violation is by inserting (artificial) edges into the network with higher travel time values. However, while such an approach can make Alg. \ref{Alg:GreedyTargetCycleConstruction1} run without halting, the resulting target-cycle $\Xi^{\#}$ will contain the edges that were artificially introduced, compromising the target-cycle performance $J_{ss}(\Xi^{\#})$.

\begin{figure}[!h]
    \centering
    \includegraphics[width=3.2in]{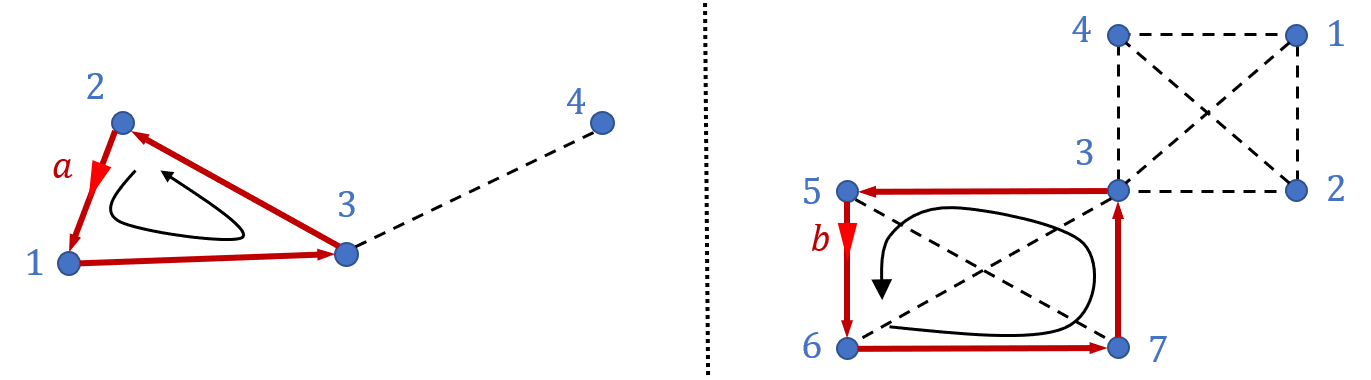}
    \caption{Two example sparse networks where Alg. \ref{Alg:GreedyTargetCycleConstruction1} has halted prematurely while executing target-cycle expansion iterations.}
    \label{Fig:CycleConstructionFailure}
\end{figure}

\subsection{The Concept of `Auxiliary Targets'}

As opposed to introducing artificial edges, we propose to introduce artificial targets (henceforth called \emph{auxiliary targets}) into the network so as to deal with this issue. Unlike artificial edges, an auxiliary target is always associated with a corresponding target in the original network. The physical interpretation of an auxiliary target is provided in the sequel.

Note that if certain targets in a network can be visited more than once, the target-cycle expansion process may not have to be halted due to the lack of edges (sparseness or non-bi-triangularity) in the network. Therefore, we propose to allow targets to be visited more than once during a tour on a target-cycle and we call such target-cycles \emph{constrained target-cycles}. For example, in both networks shown in Fig. \ref{Fig:CycleConstructionFailure}, if target $3$ is allowed to be visited more than once during a tour, we can construct the constrained target-cycles $\bar{\Xi} = \{ 2, 1,3,4,3\}$ and $\bar{\Xi} = \{ 6,7,3,2,1,4, 3,5\}$, respectively. Note that we use the notation ``\,$\bar{\cdot}$\,'' to indicate that the target-cycle is constrained (i.e., some elements are being repeated).

Giving this new flexibility (of allowing multiple visits to (some) targets during a cycle) ensures that we can form a complete target-cycle without any target being neglected - irrespective of the sparseness of the given graph. However, this new flexibility raises a new set of challenges. Specifically, we now need find how to: (\romannum{1}) analyze, (\romannum{2}) iteratively construct, (\romannum{3}) refine, and, (\romannum{3}) transform (to a set of TCP values), such constrained target-cycles. 

To analyze such constrained target-cycles (i.e., to evaluate their $J_{ss}(\cdot)$ values in \eqref{Eq:SteadyStateMeanCycleUncertainty0}), we use the previously mentioned concept of \emph{auxiliary targets}. As we will see in the sequel, replacing the repeated targets with a set of carefully chosen auxiliary targets can transform a constrained target-cycle into an \emph{equivalent} unconstrained target-cycle, enabling the application of Theorem \ref{Th:SteadyStateMeanCycleUncertainty}.

Consider a constrained target-cycle $\bar{\Xi}$ with a target $i \in\bar{\Xi}$ being visited $n$ times during a tour. Then, we first introduce an \emph{auxiliary target pool} $\mathcal{T}_{i} = \{i^{1},i^{2},\ldots,i^{n}\}$ where each auxiliary target $i^{j} \in\mathcal{T}_{i}$ can be thought of as an artificial target located in the same physical location of target $i$ (i.e., at $X_{i}$ in the mission space), but with its own parameters: an uncertainty rate $A_{i}^{j}$ and a sensing rate $B_{i}^{j}$ (to be defined). Next, we replace the repeated elements of target $i$ in $\bar{\Xi}$ with the elements taken from auxiliary target pool $\mathcal{T}_{i}$. Then, we repeat this process for all $i \in\bar{\Xi}$ with $\vert\mathcal{T}_{i} \vert> 1$. This results in an unconstrained target-cycle which we denote as $\Xi$ (i.e., without ``\,$\bar{\cdot}$\,'', we follow this notational convention in the rest of this paper).

For example, consider the constrained target-cycles proposed for the graphs in Fig. \ref{Fig:CycleConstructionFailure}. Now, using the auxiliary target pool $\mathcal{T}_{3} = \{3^{1},3^{2}\}$ (for both graphs), their respective unconstrained target-cycles $\Xi= \{2,1,3^{1},4,3^{2}\}$ and $\Xi= \{6,7,3^{1},2,1,4,3^{2},5\}$ in Fig. \ref{Fig:auxiliaryTargets1} can be obtained. Also, another example is shown in Fig. \ref{Fig:auxiliaryTargets2}.

\begin{figure}[!h]
    \centering
    \includegraphics[width=3.2in]{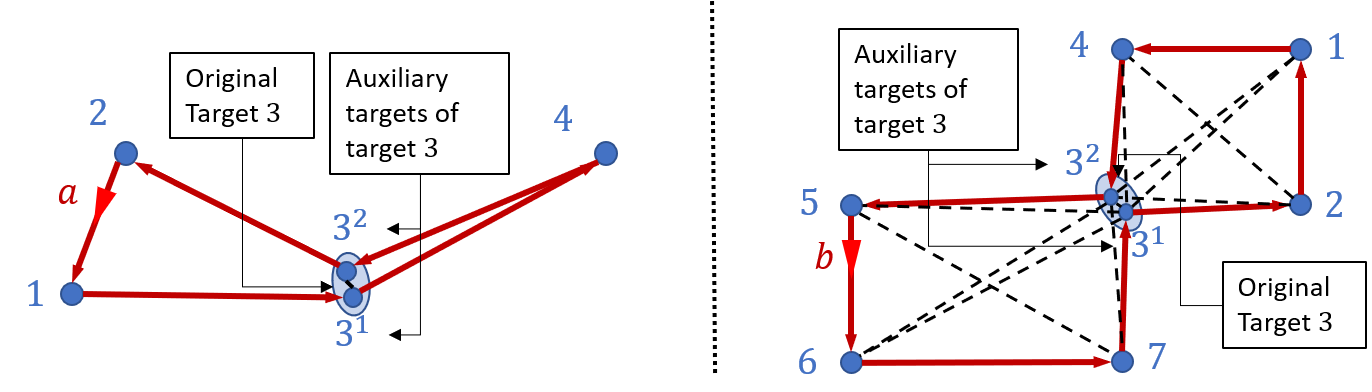}
    \caption{Example 1: Converting constrained target-cycles into unconstrained target-cycles with the use of auxiliary targets.}
    \label{Fig:auxiliaryTargets1}
\end{figure}

\begin{figure}[!h]
    \centering
    \includegraphics[width=3.2in]{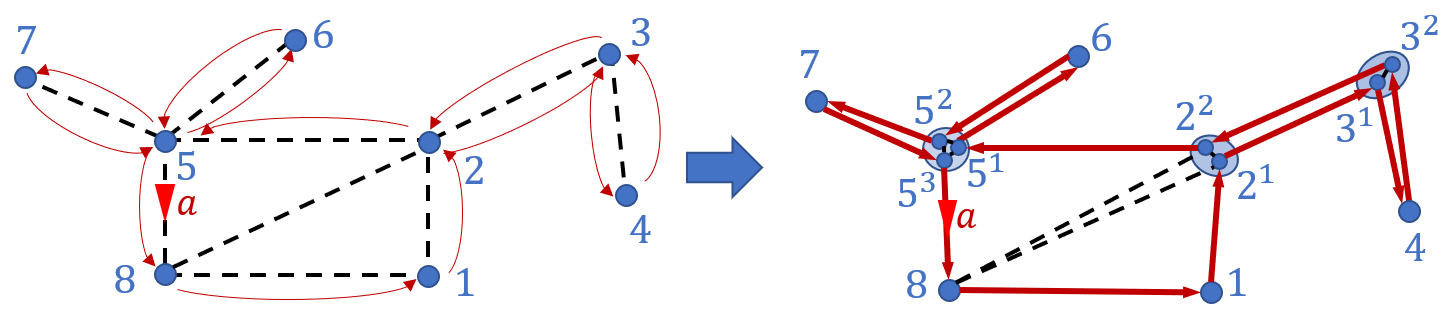}
    \caption{Example 2: Converting constrained target-cycles into unconstrained target-cycles with the use of auxiliary targets.}
    \label{Fig:auxiliaryTargets2}
\end{figure}

\paragraph*{\textbf{Equivalence Criteria}} 

For the analysis of the constrained target-cycles, we enforce the requirement that both the targets in $\Xi$ and $\bar{\Xi}$ should perform/behave in an equivalent manner at steady state. Specifically, we enforce the following \emph{equivalence criteria} between the targets in $\Xi$ and $\bar{\Xi}$.

\begin{enumerate}
\item The dwell time spent at $i^{j} \in\Xi$ is equal to the dwell time spent at $i \in\bar{\Xi}$ on its $j$\textsuperscript{th} visit during a tour.

\item The physical location of $i^{j} \in\Xi$ is the same as that of $i\in\bar{\Xi}$.

\item The contribution to the main objective $J_{T}$ \eqref{Eq:ObjectiveFunction1} by $\mathcal{T}_{i}\subset\Xi$ is equal to that of target $i \in\bar{\Xi}$, during a tour.
\end{enumerate}

The first two conditions ensure that the time required to complete a tour (for an agent) is the same for both $\Xi$ and $\bar{\Xi}$. The third condition implies $J_{ss}(\bar{\Xi}) = J_{ss}(\Xi)$. Hence, if the auxiliary target parameters are known, Theorem \ref{Th:SteadyStateMeanCycleUncertainty} will yield the value of $J_{ss}(\Xi)$. Therefore, it is clear how the concept of auxiliary targets can enable us evaluating the steady state mean cycle uncertainty value of $\Xi$.       

\paragraph*{\textbf{Sub-cycles}}

Notice that each $i^{j} \in\Xi$ can be assigned a \emph{sub-cycle} denoted by $\Xi_{i}^{j} \subset\Xi$ where $\Xi_{i}^{j}$ starts with the immediate next target to $i^{j-1} \in\Xi$ and ends with target $i^{j}$. Therefore, $\Xi$ can be written as a concatenation of sub-cycles of a target $i\in\bar{\Xi}$, i.e., 
\begin{equation}\label{Eq:SubCycleSetsAdditon}
    \Xi = \bigcup_{i^{j} \in\mathcal{T}_{i}} \Xi_{i}^{j}.
\end{equation}
For example, on the unconstrained cycle shown in \ref{Fig:auxiliaryTargets2}, the target $5$ has three auxiliary targets $\mathcal{T}_5 = \{5^1,5^2, 5^3\}$. Their respective sub-cycles would be: $\Xi_5^{1} = \Xi \backslash (\Xi_5^{2}\cup \Xi_5^{3})$, $\Xi_5^{2} = \{6, 5^2\}$, and, $\Xi_5^{3} = \{7, 5^3\}$. If a target $i\in \Xi$ does not have any auxiliary targets, then its sub-cycle (denoted as $\Xi_i^1$) would be $\Xi_i^1 = \Xi$ (i.e., the complete unconstrained target-cycle).

The \emph{sub-cycle unit vector} of $\Xi_{i}^{j}$ is denoted by $\bar{1}_{i}^{j} \in\mathbb{R}^{\vert\Xi\vert}$ and its $n$\textsuperscript{th} element is $1$ only if the $n$\textsuperscript{th} element of $\Xi$ belongs to $\Xi_{i}^{j}$. Therefore, if $\bar{1}_{\vert\Xi\vert}\in\mathbb{R}^{\vert \Xi\vert}$ is a vector of all ones, with respect to target $i\in\bar{\Xi}$, we can write
\begin{equation}
    \bar{1}_{\vert\Xi\vert} = \sum_{i^{j} \in\mathcal{T}_{i}} \bar {1}_{i}^{j}.
\end{equation}
Also, if for some $i\in\bar{\Xi}, \vert\mathcal{T}_{i}\vert=1$, then its sub-cycle unit vector is $\bar{1}_{i}^{1} = \bar{1}_{\vert\Xi\vert}$.

The \emph{sub-cycle matrix} of $\Xi$ is denoted by $\mathbf{1}_{\Xi} \in\mathbb{R}^{\vert\Xi\vert\times\vert\Xi\vert}$ and its $n$\textsuperscript{th} column is the sub-cycle unit vector of the $n$\textsuperscript{th} element of $\Xi$. Note that if $\forall i\in\bar{\Xi}, \vert\mathcal{T}_{i}\vert=1$, then all elements of $\mathbf{1}_{\Xi}$ will be 1. Figure \ref{Fig:SubCyleMatrix} shows an example sub-cycle matrix.

\begin{figure}[!h]
    \centering
    \includegraphics[width=3.2in]{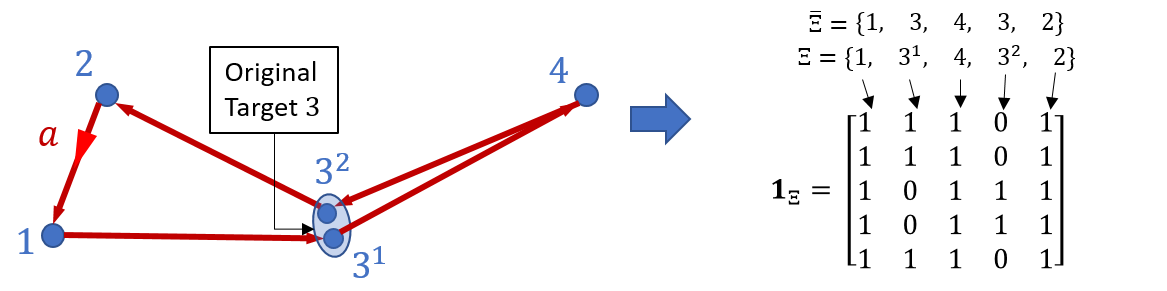}
    \caption{Sub-cycle unit vectors and sub-cycle matrix (right) for a given constrained target-cycle $\bar{\Xi}$.}
    \label{Fig:SubCyleMatrix}
\end{figure}

\subsection{Analysis of Constrained Target-Cycles}

We are now ready to analyze a generic constrained target-cycle $\bar{\Xi}$. Throughout this analysis, we will use the constrained target-cycle example shown in Fig. \ref{Fig:CycleGeometry2} for illustration purposes. Note that in this particular constrained target-cycle, $\bar{\Xi} = \{1,2,\ldots ,n,\ldots,{n+m-1},n\}$ and target $n \in\bar{\Xi}$ is visited twice during a tour. Introducing auxiliary targets $\mathcal{T}_{n} = \{n^{1}, n^{2}\}$, $\bar{\Xi}$ can be converted to its equivalent unconstrained version $\Xi$. The sub-cycles of $n^{1}$ and $n^{2}$ in $\Xi$ are $\Xi_{n}^{1} = \{1,2,\ldots,{n-1},n^{1}\}$ and $\Xi_{n}^{2} = \{{n+1},{n+2},\ldots,{n+m-1},n^{2}\}$, respectively.  

\begin{figure}[!h]
    \centering
    \includegraphics[width=3.2in]{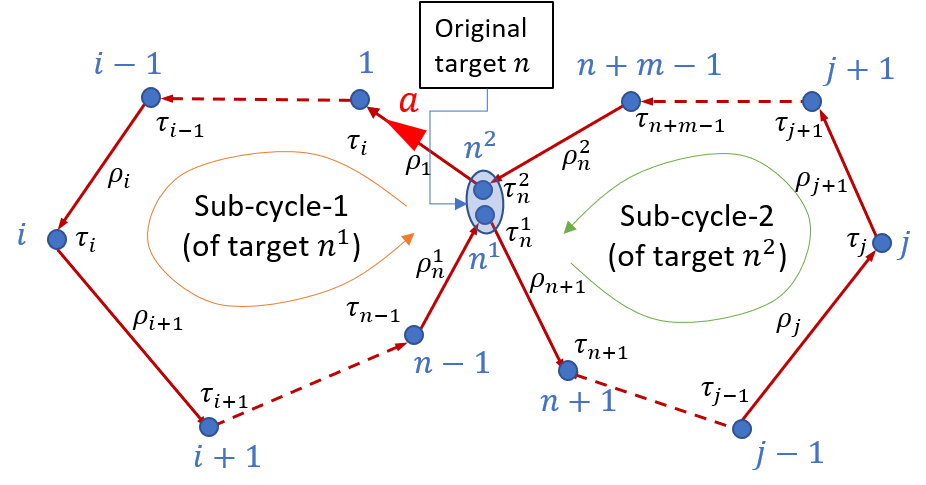}
    \caption{A general constrained target-cycle with target $n$ being visited twice during the cycle.}
    \label{Fig:CycleGeometry2}
\end{figure}

We are now ready to analyze a generic constrained target-cycle $\bar{\Xi}$. Throughout this analysis, we will use the constrained target-cycle example shown in Fig. \ref{Fig:CycleGeometry2} for illustration purposes. Note that in this particular constrained target-cycle, $\bar{\Xi} = \{1,2,\ldots ,n,\ldots,{n+m-1},n\}$ and target $n \in\bar{\Xi}$ is visited twice during a tour. Introducing auxiliary targets $\mathcal{T}_{n} = \{n^{1}, n^{2}\}$, $\bar{\Xi}$ can be converted to its equivalent unconstrained version $\Xi$. The sub-cycles of $n^{1}$ and $n^{2}$ in $\Xi$ are $\Xi_{n}^{1} = \{1,2,\ldots,{n-1},n^{1}\}$ and $\Xi_{n}^{2} = \{{n+1},{n+2},\ldots,{n+m-1},n^{2}\}$, respectively. A \emph{tour} on $\bar{\Xi}$ starts/ends when the agent leaves target $n$ to reach target $1$ and we assume the agent behavior on $\bar{\Xi}$ to follow Assumption \ref{As:AgentBehavior1}. We label the inter-target travel times on $\bar{\Xi}$ same as before (see Figs. \ref{Fig:cycleGeometry1} and \ref{Fig:CycleGeometry2}) and define $\bar{\rho}_{\Xi} = [\rho_{1},\rho_{2},\ldots,\rho^{1}_{n},\ldots,\rho_{n+m-1},\rho^{2}_{n}]^{T}$ as the \emph{travel time vector} of the target-cycle $\bar{\Xi}$. To simplify the analysis, we skip the transient analysis of the constrained target-cycle $\bar{\Xi}$ and directly make the following assumption (see also Remark \ref{Rm:Dynamics2}).

\begin{assumption}
\label{As:StabilityOfConstrainedCycles}
The dwell time dynamics of the constrained target-cycle $\bar{\Xi}$ have a feasible and globally asymptotically stable equilibrium point.
\end{assumption} 

When the dwell time dynamics have converged to the equilibrium point as per the Assumption \ref{As:StabilityOfConstrainedCycles}, the overall system is said to be operating in its steady state. Figure \ref{Fig:CycleGraphs2} shows the steady state behavior of the target uncertainties during a tour on the target-cycle $\bar{\Xi}$. The notation $\bar{\tau}_{\Xi} = [\tau_{1},\tau_{2},\ldots,\tau^{1}_{n},\ldots,\tau_{n+m-1},\tau^{2}_{n}]^{T}$ is used to represent the steady state dwell times of targets in $\Xi$. The following lemma generalizes Lemma \ref{Lm:SteadyStateDwellTimes1} to evaluate $\bar{\tau}_{\Xi}$ for any target-cycle $\bar{\Xi}$.

\begin{figure}[!h]
    \centering
    \includegraphics[width=3.2in]{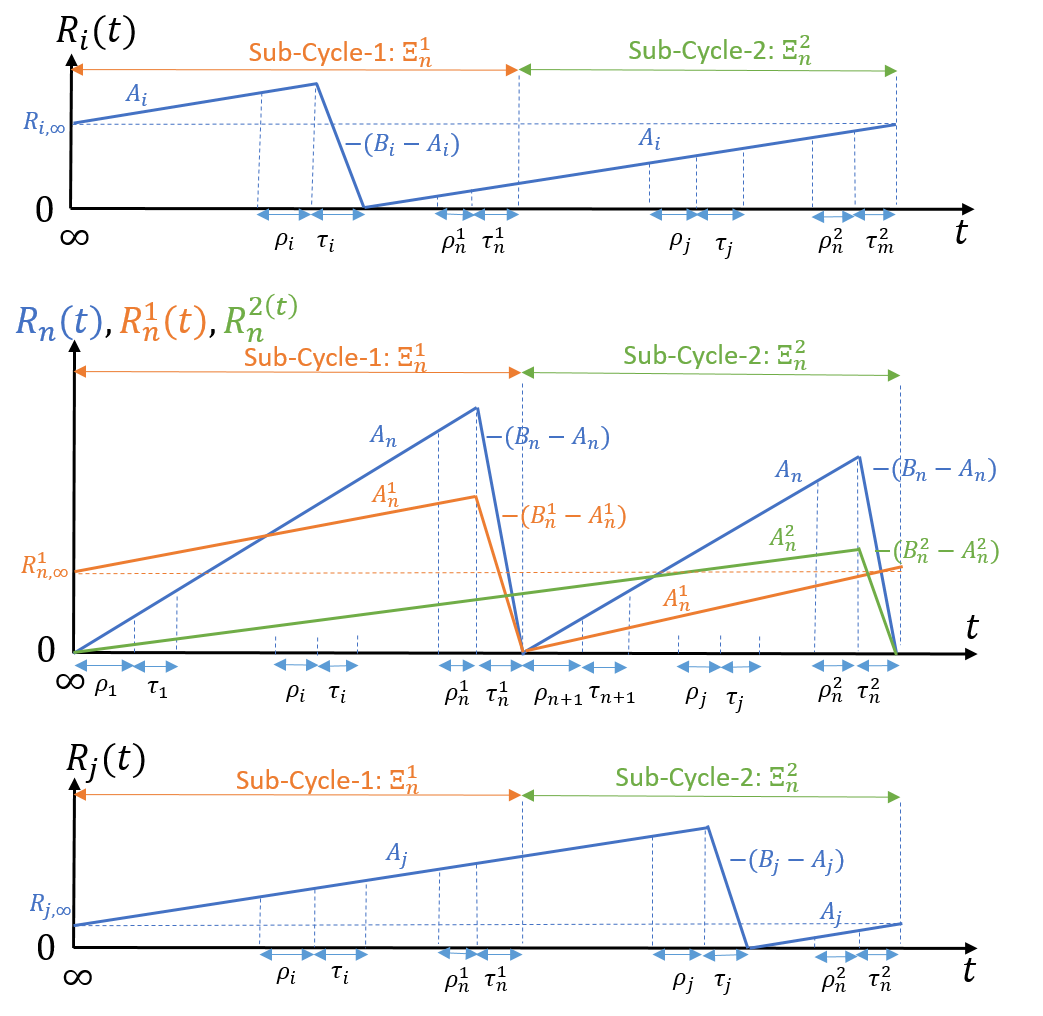}
    \caption{Variation of target uncertainties of the constrained target-cycle shown in Fig. \ref{Fig:CycleGeometry2} - after achieving the steady state.}
    \label{Fig:CycleGraphs2}
\end{figure}

\begin{lemma}\label{Lm:SteadyStateDwellTimes2}
Under Assumptions \ref{As:AgentBehavior1} and \ref{As:StabilityOfConstrainedCycles}, when a single agent traverses a generic constrained target-cycle $\bar{\Xi}$ (with $\Xi$ being the equivalent unconstrained version of $\bar{\Xi}$), the steady state dwell times $\bar{\tau}_{\Xi}$ are given by
\begin{equation}
\label{Eq:SteadyStateDwellTimes2}
\bar{\tau}_{\Xi} = [diag(\bar{\gamma}_{\Xi})
- \mathbf{1}_{\Xi}]^{-1}\mathbf{1}_{\Xi} \bar{\rho}_{\Xi},
\end{equation}
where $\bar{\gamma}_{\Xi}\in\mathbb{R}^{\vert\Xi\vert}$ is such that if the $i$\textsuperscript{th} target of $\bar{\Xi}$ is $j$, then, the $i$\textsuperscript{th} element of $\bar{\gamma}_{\Xi}$ is $\frac{B_{j}}{A_{j}}$, and $\mathbf{1}_{\Xi}$ is the sub-cycle matrix and $\bar{\rho}_{\Xi}$ is the travel time vector of the target-cycle $\Xi$. 
\end{lemma}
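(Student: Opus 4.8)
The plan is to mirror the equilibrium computation of Lemma \ref{Lm:SteadyStateDwellTimes1}, but to derive the steady-state balance equations directly rather than by first writing out a transient recurrence. This shortcut is legitimate because Assumption \ref{As:StabilityOfConstrainedCycles} already grants that a feasible, globally asymptotically stable equilibrium exists, so it suffices to characterize the fixed point of the dwell-time dynamics. The one structural novelty relative to the unconstrained case is that a repeated target resets its uncertainty to zero several times per tour (once at each auxiliary copy $i^{j}$, by Assumption \ref{As:AgentBehavior1} and equivalence criterion (1)), so the relevant ``time away'' for the copy $i^{j}$ is the duration of its \emph{sub-cycle} $\Xi_{i}^{j}$ rather than the duration of the whole tour.

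First I would read off a per-target balance from the steady-state uncertainty trajectory in Fig. \ref{Fig:CycleGraphs2}. Fix an element of $\Xi$ and let $\tau$ denote its steady-state dwell. Between the two consecutive visits that bound its sub-cycle, the physical target's uncertainty starts at $0$, grows at rate $A$ over the away-time (the sub-cycle duration $D$ minus its own dwell $\tau$), and is then driven back to $0$ at net rate $(B-A)$ during the dwell. Equating accumulation to removal, $A(D-\tau)=(B-A)\tau$, which simplifies to $\gamma\,\tau=D$ with $\gamma=B/A$. Writing $D$ as the sum of travel and dwell times over the sub-cycle, $D=(\bar{1}_{i}^{j})^{T}(\bar{\rho}_{\Xi}+\bar{\tau}_{\Xi})$ with $\bar{1}_{i}^{j}$ the sub-cycle unit vector, gives one scalar equation per element of $\Xi$. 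For a target with no auxiliary copies the sub-cycle is all of $\Xi$, its unit vector is $\bar{1}_{\vert\Xi\vert}$, and the balance reduces to $\gamma\tau=\rho_{\Xi}+\bar{1}_{\vert\Xi\vert}^{T}\bar{\tau}_{\Xi}$, exactly the fixed-point condition implicit in \eqref{Eq:DwellTimeDynamics2}.

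Next I would stack these $\vert\Xi\vert$ scalar balances. Collecting the coefficients $\gamma$ into $diag(\bar{\gamma}_{\Xi})$ and the sub-cycle unit vectors into the sub-cycle matrix $\mathbf{1}_{\Xi}$ turns the system into $diag(\bar{\gamma}_{\Xi})\,\bar{\tau}_{\Xi}=\mathbf{1}_{\Xi}(\bar{\rho}_{\Xi}+\bar{\tau}_{\Xi})$. Rearranging gives $[diag(\bar{\gamma}_{\Xi})-\mathbf{1}_{\Xi}]\,\bar{\tau}_{\Xi}=\mathbf{1}_{\Xi}\bar{\rho}_{\Xi}$, and inverting the bracketed matrix yields \eqref{Eq:SteadyStateDwellTimes2}. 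Invertibility, i.e.\ existence and uniqueness of a feasible $\bar{\tau}_{\Xi}$, is exactly what Assumption \ref{As:StabilityOfConstrainedCycles} supplies, playing the role that $\sum_{i}A_{i}/B_{i}<1$ played in Lemma \ref{Lm:SteadyStateDwellTimes1}. As a consistency check I would specialize to a cycle with no repeats: then $\mathbf{1}_{\Xi}=\bar{1}_{\vert\Xi\vert}\bar{1}_{\vert\Xi\vert}^{T}$, and using $diag(\bar{\gamma})=diag(\bar{\alpha}+\bar{1}_{\vert\Xi\vert})$ the expression collapses to $(diag(\bar{\alpha}+\bar{1}_{\vert\Xi\vert})-\bar{1}_{\vert\Xi\vert}\bar{1}_{\vert\Xi\vert}^{T})^{-1}\bar{1}_{\vert\Xi\vert}\rho_{\Xi}$, recovering the equilibrium obtained inside the proof of Lemma \ref{Lm:SteadyStateDwellTimes1}.

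The main obstacle I anticipate is the bookkeeping that converts the geometric ``time away'' argument into the compact sub-cycle-matrix form. One must verify that each auxiliary copy's away-interval is precisely its sub-cycle, so that the travels and dwells counted are exactly those indexed by $\bar{1}_{i}^{j}$; that the reset $R=0$ at every copy is guaranteed by Assumption \ref{As:AgentBehavior1} together with equivalence criterion (1); and that the orientation of $\mathbf{1}_{\Xi}$ is consistent with the stacking so that $\mathbf{1}_{\Xi}$ (and not its transpose) appears in \eqref{Eq:SteadyStateDwellTimes2}. Once these are settled, everything else is the same linear-algebra manipulation as in Lemma \ref{Lm:SteadyStateDwellTimes1}.
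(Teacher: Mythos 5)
Your proposal is correct and follows essentially the same route as the paper's proof: reading the per-visit balance $(B_n-A_n)\tau_n^j = A_n(T_n^j-\tau_n^j)$ off the steady-state uncertainty profile, substituting $T_n^j = (\bar{1}_n^j)^T(\bar{\rho}_{\Xi}+\bar{\tau}_{\Xi})$ via the sub-cycle unit vectors, and stacking the $\vert\Xi\vert$ equations into $diag(\bar{\gamma}_{\Xi})\bar{\tau}_{\Xi} = \mathbf{1}_{\Xi}(\bar{\rho}_{\Xi}+\bar{\tau}_{\Xi})$ before inverting. Your added consistency check against Lemma \ref{Lm:SteadyStateDwellTimes1} and the explicit remark that Assumption \ref{As:StabilityOfConstrainedCycles} supplies invertibility (which the paper defers to Remark \ref{Rm:Dynamics2}) are sensible refinements but do not change the argument.
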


\begin{proof}
By inspection of the $R_{n}(t)$ profile in Fig. \ref{Fig:CycleGraphs2}, for each target $n\in\bar{\Xi}$ and for each auxiliary target $n^{j} \in\mathcal{T}_{n}$, considering its corresponding sub-cycle $\Xi_{n}^{j}$'s time period, we can write: $(B_{n}-A_{n}) \tau_{n}^{j} = A_{n}(T_{n}^{j}-\tau_{n}^{j})$ which implies
\begin{equation*}
     B_{n} \tau_{n}^{j} = A_{n} T_{n}^{j} \ \ \forall j \ni n^j \in \mathcal{T}_n,
\end{equation*} 
where $T_{n}^{j}$ is the total time taken to complete the sub-cycle $\Xi_{n}^{j}$. Now, using the sub-cycle unit vectors, we can substitute for $T_{n}^{j}$ to get: \begin{equation*}
    B_{n} \tau_{n}^{j} = A_{n} (\bar{1}_{n}^{j})^{T}(\bar{\rho}_{\Xi} + \bar{\tau}_{\Xi}).
\end{equation*}
This relationship gives $\vert\Xi\vert$ equations which we need to solve for $\bar{\tau}_{\Xi} \in\mathbb{R}^{\vert\Xi\vert}$. Arranging all the equations in a matrix form:
$$
    diag(\bar{\gamma}_{\Xi}) \bar{\tau}_{\Xi} = \mathbf{1}_{\Xi}(\bar{\rho}_{\Xi} + \bar{\tau}_{\Xi}) 
$$
gives the result in \eqref{Eq:SteadyStateDwellTimes2}.
\end{proof}

It is important to note that Lemma \ref{Lm:SteadyStateDwellTimes2} is applicable to any general target-cycle $\Xi$. Hence when the interested target-cycle $\Xi$ is an unconstrained one, it is easy to show that both steady state dwell time values given by Lemma \ref{Lm:SteadyStateDwellTimes2} and Lemma \ref{Lm:SteadyStateDwellTimes1} are identical.   

\begin{remark}
\label{Rm:Dynamics2} 
Note that \eqref{Eq:SteadyStateDwellTimes2} is only valid under Assumption \ref{As:StabilityOfConstrainedCycles}, i.e., if the dwell times observed in the $k$\textsuperscript{th} tour on $\bar{\Xi}$ (say $\bar{\tau}_{\Xi,k}$) converge to an equilibrium point ($\bar{\tau}_{\Xi}$) as $k\rightarrow\infty$. However, based on the form of \eqref{Eq:SteadyStateDwellTimes2}, we can conclude that the conditions for the existence and feasibility of such an equilibrium point are $\vert diag(\bar{\gamma}_{\Xi}) - \mathbf{1}_{\Xi} \vert\neq0$ and $[diag(\bar{\gamma}_{\Xi}) - \mathbf{1}_{\Xi}]^{-1}\mathbf{1}_{\Xi} \bar{\rho}_{\Xi} > 0$, respectively.
\end{remark}

\begin{remark}
Even though we cannot apply Lemma \ref{Lm:ShermonMorrison} to simplify the inverse: $[diag(\bar{\gamma}_{\Xi}) - \mathbf{1}_{\Xi}]^{-1}$, using the fact that: $rank(\mathbf{1}_{\Xi}) = ($number of auxiliary targets in $\Xi)\ll\vert\Xi\vert$, we can apply the Ken-Miller theorem \cite{Miller1981} to efficiently compute this inverse. 
\end{remark}

Using Lemma \ref{Lm:SteadyStateDwellTimes2}, we can find the dwell time vector $\bar{\tau}_{\Xi}$. Further, we already know the travel times vector $\bar{\rho}_{\Xi}$. Therefore, we now can find the \emph{total sub-cycle times} denoted by $T^{j}_n$ for all targets $n^j \in \Xi$, as,
\begin{equation}\label{Eq:TotalSubCycleTime}
    T_{n}^{j} = (\bar{1}_{n}^{j})^{T}(\bar{\rho}_{\Xi}+\bar{\tau}_{\Xi}).
\end{equation}
Furthermore, when the total sub-cycle time metric evaluated for an actual target (which does not have any auxiliary targets), we get the \emph{total cycle time} value denoted by $T_{\Xi}$ where,
\begin{equation}\label{Eq:TotalCycleTime}
    T_{\Xi} = \bar{1}_{\vert\Xi\vert}^{T}(\bar{\rho}_{\Xi}+\bar{\tau}_{\Xi}). 
\end{equation}

\begin{lemma}\label{Lm:AuxiliaryTargetParameters}
Under the same conditions stated in Lemma \ref{Lm:SteadyStateDwellTimes2}, the auxiliary target parameters of any $n^{j} \in\Xi$ (i.e., $A_{n}^{j}$ and $B_{n}^{j}$) are:
\begin{equation}
\label{Eq:AuxiliaryTargetParameters}A_{n}^{j} = \frac{T_{n}^{j}}{T_{\Xi}}%
\frac{\tau_{n}^{j}(B_{n}-A_{n})}{(T_{\Xi}-\tau_{n}^{j})} \mbox{\ \, and \ }
B_{n}^{j} = \frac{T_{n}^{j}(B_{n}-A_{n})}{(T_{\Xi}-\tau_{n}^{j})}.
\end{equation}
\end{lemma}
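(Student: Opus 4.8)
The plan is to pin down the two unknown auxiliary parameters $A_n^j$ and $B_n^j$ of each $n^j \in \mathcal{T}_n$ by imposing exactly two of the three equivalence criteria, namely the first (dwell-time matching) and the third (contribution matching), while the second (co-location) holds automatically by construction. Both equations can be read directly off the steady-state triangular profiles of $R_n(t)$ in Fig. \ref{Fig:CycleGraphs2}, in the same spirit as the proof of Theorem \ref{Th:SteadyStateMeanCycleUncertainty}, recalling that by Lemma \ref{Lm:SteadyStateDwellTimes2} the dwell times $\tau_n^j$ (and hence the sub-cycle times $T_n^j$ in \eqref{Eq:TotalSubCycleTime} and the total cycle time $T_\Xi$ in \eqref{Eq:TotalCycleTime}) are already known quantities.

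First I would write the balance relation that the first equivalence criterion forces on $n^j$. In the equivalent unconstrained cycle $\Xi$ the auxiliary target $n^j$ is visited exactly once per tour, so under Assumption \ref{As:AgentBehavior1} its uncertainty grows at rate $A_n^j$ over the idle interval $(T_\Xi - \tau_n^j)$ and is then driven back to zero at rate $(B_n^j - A_n^j)$ over the dwell $\tau_n^j$. Equating the peak reached on the rising and falling edges gives $(B_n^j - A_n^j)\tau_n^j = A_n^j (T_\Xi - \tau_n^j)$, i.e. $B_n^j \tau_n^j = A_n^j T_\Xi$. This is precisely the single-visit steady-state dwell-time condition, so enforcing it guarantees that the auxiliary dynamics reproduce the prescribed dwell time $\tau_n^j$, which is the content of the first criterion.

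Next I would encode the third criterion. Over one steady-state tour the $R_{n^j}(t)$ triangle in $\Xi$ has base $T_\Xi$ and height $(B_n^j - A_n^j)\tau_n^j$, so $n^j$ contributes $\tfrac{1}{2}(B_n^j - A_n^j)\tau_n^j$ to $J_{ss}(\Xi)$; meanwhile, in $\bar{\Xi}$ the $j$-th visit of the real target $n$ contributes a triangle of base $T_n^j$ and height $(B_n - A_n)\tau_n^j$. Since $\bar{\Xi}$ and $\Xi$ share the tour duration $T_\Xi$, matching these contributions sub-cycle by sub-cycle (which in turn makes the aggregate over $\mathcal{T}_n$ agree, as required) gives the area identity $T_\Xi (B_n^j - A_n^j) = T_n^j (B_n - A_n)$.

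Finally I would solve the resulting $2\times 2$ linear system in $(A_n^j, B_n^j)$: the area identity yields $(B_n^j - A_n^j) = \tfrac{T_n^j}{T_\Xi}(B_n - A_n)$, and substituting the balance relation in the form $A_n^j = B_n^j \tau_n^j / T_\Xi$ isolates $B_n^j = T_n^j(B_n - A_n)/(T_\Xi - \tau_n^j)$, after which $A_n^j = B_n^j \tau_n^j/T_\Xi$ collapses to the claimed expression for $A_n^j$. A useful sanity check is that for a target with $\vert\mathcal{T}_n\vert=1$ one has $T_n^1 = T_\Xi$ and the formulas reduce to $A_n^1=A_n,\ B_n^1=B_n$, as expected. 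The only genuinely delicate point is conceptual rather than computational: one must justify distributing the third criterion per auxiliary target (matching each $n^j$'s full-cycle triangle against the $j$-th sub-cycle triangle of $n$) so that summation recovers the aggregate statement, and one must correctly recognize that the two triangles share neither base nor height — $n^j$ idles over the whole tour $T_\Xi$ whereas $n$ idles only over $T_n^j$ — yet are constrained to have equal areas. Once these bases and heights are identified from Fig. \ref{Fig:CycleGraphs2}, the remaining algebra is routine.
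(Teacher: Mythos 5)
Your proof is correct and follows essentially the same route as the paper: extract one equation from the dwell-time (first) equivalence criterion and one from the contribution (third) criterion by comparing steady-state triangular uncertainty profiles, then solve the resulting $2\times2$ system for $(A_n^j, B_n^j)$, with the per-auxiliary-target splitting of the aggregate third criterion handled exactly as the paper does. One detail is worth flagging: the paper's printed first equation reads $B_{n}^{j}\tau_{n}^{j} = A_{n}^{j}T_{n}^{j}$, whereas you write $B_{n}^{j}\tau_{n}^{j} = A_{n}^{j}T_{\Xi}$; only your version is consistent with the claimed formulas in \eqref{Eq:AuxiliaryTargetParameters} (substituting the paper's version into the area identity $T_{\Xi}(B_{n}^{j}-A_{n}^{j}) = T_{n}^{j}(B_{n}-A_{n})$ yields $B_{n}^{j} = (T_{n}^{j})^{2}(B_{n}-A_{n})/\bigl(T_{\Xi}(T_{n}^{j}-\tau_{n}^{j})\bigr)$, not the stated result), and only yours reflects that $n^{j}$ idles for $T_{\Xi}-\tau_{n}^{j}$ in the unconstrained cycle. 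So the $T_{n}^{j}$ in the paper's intermediate equation is evidently a typo for $T_{\Xi}$, and your derivation, including the sanity check that $\vert\mathcal{T}_{n}\vert=1$ recovers $A_n^1=A_n$, $B_n^1=B_n$, is the internally consistent one.
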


\begin{proof}
Observing the auxiliary target uncertainty profiles $R_{n}^{1}(t)$ and $R_{n}^{2}(t)$ (of auxiliary targets $n^{1}$ and $n^{2}$, respectively) illustrated in Fig. \ref{Fig:CycleGraphs2} for the target-cycle shown in Fig. \ref{Fig:CycleGeometry2}, note that the shape of these profiles should satisfy the equivalence criteria that we previously established.

Using these graphs, for the complete cycle duration $T_{\Xi}$, we can write $B_n^j \tau_n^j = A_n^j T_n^j$ for $j=1,2$. This result can be generalized to any generic target-cycle $\bar{\Xi}$ as, 
\begin{equation}
\label{Eq:AuxiliaryTargetParametersProofStep1}
    B_{n}^{j} \tau_{n}^{j} =
A_{n}^{j} T_{n}^{j}, \ \ \forall n^{j}\in\mathcal{T}_{n},\ \forall n\in
\bar{\Xi}.
\end{equation}
The above relationship ensures the first condition in the equivalence criteria. 

Now, going back to the case of target-cycle shown in Fig.  \eqref{Fig:CycleGeometry2}, to ensure the third condition in the equivalence criteria, we need: 
\begin{equation}
\begin{aligned}
    \frac{1}{T}\int_{T_{\Xi}}R_n^1(t)dt + \frac{1}{T}\int_{T_{\Xi}}R_n^2(t)dt = 
    \frac{1}{T}\int_{T_{\Xi}}R_n(t)dt\\
    =  
    \frac{1}{T}\int_{T^1_n}R_n(t)dt + \frac{1}{T}\int_{T^2_n}R_n(t)dt.
\end{aligned}
\end{equation}
Here, we can get two equations by equating individual terms in L.H.S. and R.H.S. This result can be generalized to any generic target-cycle $\bar{\Xi}$ as, 
\begin{equation}
\int_{T_{\Xi}} R_{n}^{j}(t)dt = \int_{T_{n}^{j}} R_{n}(t)dt, \ \forall n^{j}\in\mathcal{T}_{n},\ \forall n\in\bar{\Xi}.
\end{equation}
As uncertainty profiles are piece-wise linear, we can evaluate these integrals and simplify this system of equations as:
\begin{equation}\label{Eq:AuxiliaryTargetParametersProofStep2}
    T_{\Xi}(B_{n}^{j} - A_{n}^{j}) =
T_{n}^{j}(B_{n}-A_{n}),\ \ \forall n^{j}\in\mathcal{T}_{n},\ \forall n\in
\bar{\Xi}.
\end{equation}
Finally, we can solve \eqref{Eq:AuxiliaryTargetParametersProofStep1} and \eqref{Eq:AuxiliaryTargetParametersProofStep2} to obtain the auxiliary target parameters: $\{(A_{n}^{j},B_{n}^{j}): \forall n^{j} \in\mathcal{T}_{n}, \ \forall n \in\bar{\Xi}\}$ as in \eqref{Eq:AuxiliaryTargetParameters}.
\end{proof}

Using Lemma \ref{Lm:AuxiliaryTargetParameters}, we can find all the unknown target uncertainty and sensing rate parameters of the targets listed in $\Xi^u$. Now, Let us lump those parameters in vectors $\bar{A}_{\Xi}$ and $\bar{B}_{\Xi}$ respectively. For example, for the target-cycle shown in Fig. \ref{Fig:CycleGeometry2}, $\bar{A}_{\Xi} = [A_1,A_2,\ldots,A^1_n,\ldots,A_{n+m-1},A^2_n]^T$. With this notation, we can propose our main theorem as follows.

\begin{theorem}\label{Th:SteadyStateMeanCycleUncertainty2}
Under Assumptions \ref{As:AgentBehavior1} and \ref{As:StabilityOfConstrainedCycles}, when a single agent traverses a generic constrained target-cycle $\bar{\Xi}$ (with $\Xi$ being the equivalent unconstrained version of $\bar{\Xi}$), the steady state mean cycle uncertainty $J_{ss}(\bar{\Xi})$ (defined in \eqref{Eq:SteadyStateMeanCycleUncertainty0}) is 
\begin{equation}
\label{Eq:SteadyStateMeanCycleUncertainty2}
J_{ss}(\bar{\Xi}) = \frac{1}{2}(\bar{B}_{\Xi} - \bar{A}_{\Xi})^{T}\bar{\tau}_{\Xi}, 
\end{equation}
where $\bar{\tau}_{\Xi}$ is given by Lemma \ref{Lm:SteadyStateDwellTimes2} and auxiliary target parameters included in the vectors $\bar{A}_{\Xi}$ and $\bar{B}_{\Xi}$ are given by Lemma \ref{Lm:AuxiliaryTargetParameters}.
\end{theorem}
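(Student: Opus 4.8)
The plan is to reduce Theorem \ref{Th:SteadyStateMeanCycleUncertainty2} to the already-established Theorem \ref{Th:SteadyStateMeanCycleUncertainty} by exploiting the auxiliary-target construction. The central observation is that $\Xi$, obtained from $\bar{\Xi}$ by replacing each repeated physical target $n$ with the distinct auxiliary targets $\{n^{j}\}\subset\mathcal{T}_n$ carrying the parameters $(A_n^j,B_n^j)$ of Lemma \ref{Lm:AuxiliaryTargetParameters}, is a genuine \emph{unconstrained} target-cycle: no label is repeated in $\Xi$. Hence I would first invoke the third equivalence criterion, which was imposed precisely so that the per-tour contribution of the pool $\mathcal{T}_i\subset\Xi$ to $J_T$ matches that of $i\in\bar{\Xi}$; summing this over all physical targets (trivially an equality for any $i$ with $\vert\mathcal{T}_i\vert=1$) yields $J_{ss}(\bar{\Xi}) = J_{ss}(\Xi)$ in the sense of \eqref{Eq:SteadyStateMeanCycleUncertainty0}. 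This collapses the problem to evaluating $J_{ss}(\Xi)$ for an unconstrained cycle.

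For the second step I would reuse the geometric area argument from the proof of Theorem \ref{Th:SteadyStateMeanCycleUncertainty}. Under Assumptions \ref{As:AgentBehavior1} and \ref{As:StabilityOfConstrainedCycles}, Lemma \ref{Lm:SteadyStateDwellTimes2} supplies the stationary dwell-time vector $\bar{\tau}_{\Xi}$ of \eqref{Eq:SteadyStateDwellTimes2}, and the steady state guaranteed by Assumption \ref{As:StabilityOfConstrainedCycles} makes every auxiliary-target uncertainty profile $R_n^j(t)$ periodic with period equal to the total cycle time $T_{\Xi}$ of \eqref{Eq:TotalCycleTime}, since each $n^j$ is visited exactly once per tour on the unconstrained cycle $\Xi$. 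Each such profile is therefore a single triangle whose base is $T_{\Xi}$ and whose height is $(B_n^j-A_n^j)\tau_n^j$, the agent driving $R_n^j$ to zero over the dwell $\tau_n^j$ at rate $B_n^j-A_n^j$. Integrating over one tour and dividing by $T_{\Xi}$ gives a contribution $\tfrac{1}{2}(B_n^j-A_n^j)\tau_n^j$ per auxiliary target; summing over all $n^j\in\Xi$ yields $J_{ss}(\Xi)=\tfrac{1}{2}(\bar{B}_{\Xi}-\bar{A}_{\Xi})^T\bar{\tau}_{\Xi}$, which is exactly \eqref{Eq:SteadyStateMeanCycleUncertainty2}.

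The step I expect to be the main obstacle is justifying the triangular shape and the cancellation of $T_{\Xi}$ in the area computation, i.e., verifying that each auxiliary profile really has a single clean peak per tour and that the parameters of Lemma \ref{Lm:AuxiliaryTargetParameters} make its decay and build-up phases meet consistently, with the height read off the dwell satisfying $(B_n^j-A_n^j)\tau_n^j = A_n^j(T_{\Xi}-\tau_n^j)$. Most of this difficulty, however, has already been discharged upstream: Lemma \ref{Lm:SteadyStateDwellTimes2} fixes the dwell times and Lemma \ref{Lm:AuxiliaryTargetParameters} fixes $(A_n^j,B_n^j)$ exactly so the equivalence criteria hold, so Theorem \ref{Th:SteadyStateMeanCycleUncertainty2} is essentially a short assembly of those results together with the triangle argument of Theorem \ref{Th:SteadyStateMeanCycleUncertainty}. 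I would deliberately reuse only the \emph{geometric} portion of that earlier proof rather than cite Theorem \ref{Th:SteadyStateMeanCycleUncertainty} verbatim, because the latter carries the hypotheses $\sum_i A_i/B_i<1$ and Assumption \ref{As:SchurStability}, whereas here the corresponding existence and stability facts for the auxiliary system are instead supplied by Assumption \ref{As:StabilityOfConstrainedCycles} and Lemma \ref{Lm:SteadyStateDwellTimes2}; reusing the triangle argument sidesteps re-checking those algebraic conditions for the transformed cycle.
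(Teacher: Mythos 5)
Your proof is correct, and its overall skeleton is the same as the paper's---both reduce $J_{ss}(\bar{\Xi})$ to $J_{ss}(\Xi)$ via equivalence criterion 3 and then evaluate the unconstrained cycle $\Xi$ using the dwell times of Lemma \ref{Lm:SteadyStateDwellTimes2} and the parameters of Lemma \ref{Lm:AuxiliaryTargetParameters}---but you diverge at the evaluation step, and the divergence is substantive. The paper's proof simply cites Theorem \ref{Th:SteadyStateMeanCycleUncertainty} as a black box (since $\Xi$ is unconstrained) and then closes with criterion 3, whereas you rerun only the triangle-area portion of that theorem's proof. Your stated reason targets exactly the weak point of the paper's version: Theorem \ref{Th:SteadyStateMeanCycleUncertainty} is conditioned on $\sum_i A_i/B_i<1$ and on the Schur-stability Assumption \ref{As:SchurStability}, and it expresses the equilibrium dwell times through Lemma \ref{Lm:SteadyStateDwellTimes1}, none of which the paper re-verifies for the cycle whose targets carry the auxiliary parameters $(A_n^j,B_n^j)$; moreover, those parameters are themselves defined through steady-state quantities, so invoking the transient theory behind Theorem \ref{Th:SteadyStateMeanCycleUncertainty} for them has a circular flavor. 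Your route avoids all of this: Assumption \ref{As:StabilityOfConstrainedCycles} supplies the steady state, and \eqref{Eq:AuxiliaryTargetParameters} gives $A_n^j/B_n^j=\tau_n^j/T_{\Xi}$, which is precisely your closure condition $B_n^j\tau_n^j=A_n^jT_{\Xi}$, so each auxiliary profile is a single triangle of base $T_{\Xi}$ and height $(B_n^j-A_n^j)\tau_n^j$, and summing the normalized areas yields \eqref{Eq:SteadyStateMeanCycleUncertainty2}. What the paper's citation buys is brevity; what yours buys is rigor. For completeness, the paper's shortcut is repairable: $\sum_{n^j}A_n^j/B_n^j=\bar{1}_{\vert\Xi\vert}^{T}\bar{\tau}_{\Xi}/T_{\Xi}<1$ holds automatically when travel times are positive, and substituting the ratios $\tau_n^j/T_{\Xi}$ into \eqref{Eq:SteadyStateDwellTimes1} returns exactly $\bar{\tau}_{\Xi}$, so the two dwell-time formulas agree---but these checks appear nowhere in the paper, and your argument is constructed so that they are never needed.
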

\begin{proof}
Note that $\Xi$ is an unconstrained target-cycle. Therefore, we can directly use Theorem \ref{Th:SteadyStateMeanCycleUncertainty} to write 
\begin{equation}
    J_{ss}(\Xi) = \frac{1}{2}(\bar{B}_{\Xi} - \bar{A}_{\Xi})^{T}\bar{\tau}_{\Xi},
\end{equation}
where $\bar{\tau}_{\Xi}$ is given by Lemma \ref{Lm:SteadyStateDwellTimes2} and unknown parameters in $\bar{A}_{\Xi}$ and $\bar{B}_{\Xi}$ are given by Lemma \ref{Lm:AuxiliaryTargetParameters} (or by simply using \eqref{Eq:AuxiliaryTargetParametersProofStep2}). Finally, according to the equivalence criterion 3: $J_{ss}(\bar{\Xi}) = J_{ss}(\Xi)$.  
\end{proof}

\subsection{Greedy Target-Cycle Construction}
Let $\mathcal{D}$ denote the set of all possible target-cycles on $\mathcal{G}$. Compared to $\mathcal{C}$ in \eqref{Eq:OptimumUnconstrainedCycleSearch}, $\mathcal{D} \supseteq\mathcal{C}$ as $\mathcal{D}$ now also includes all the constrained target-cycles. Clearly, $\vert\mathcal{D}\vert= \infty$ and thus, exhaustive search methods (exploiting Theorem \ref{Th:SteadyStateMeanCycleUncertainty2}) cannot be used to determine the best performing target-cycle in $\mathcal{D}$:
\begin{equation}
\label{Eq:OptimumCycleSearch}\bar{\Xi}^{*} = \arg\min_{\bar{\Xi}
\in\mathcal{D}}J_{ss}(\bar{\Xi}).
\end{equation}
Hence, we seek to efficiently construct a sub-optimal target-cycle $\bar{\Xi}^{\#}\in\mathcal{D}$ as a candidate for $\bar{\Xi}^{*}$ in \eqref{Eq:OptimumCycleSearch} using a greedy iterative target-cycle expansion process identical to Alg. \ref{Alg:GreedyTargetCycleConstruction1}.

\begin{figure*}[!ht]
    \centering
    \includegraphics[width=5in]{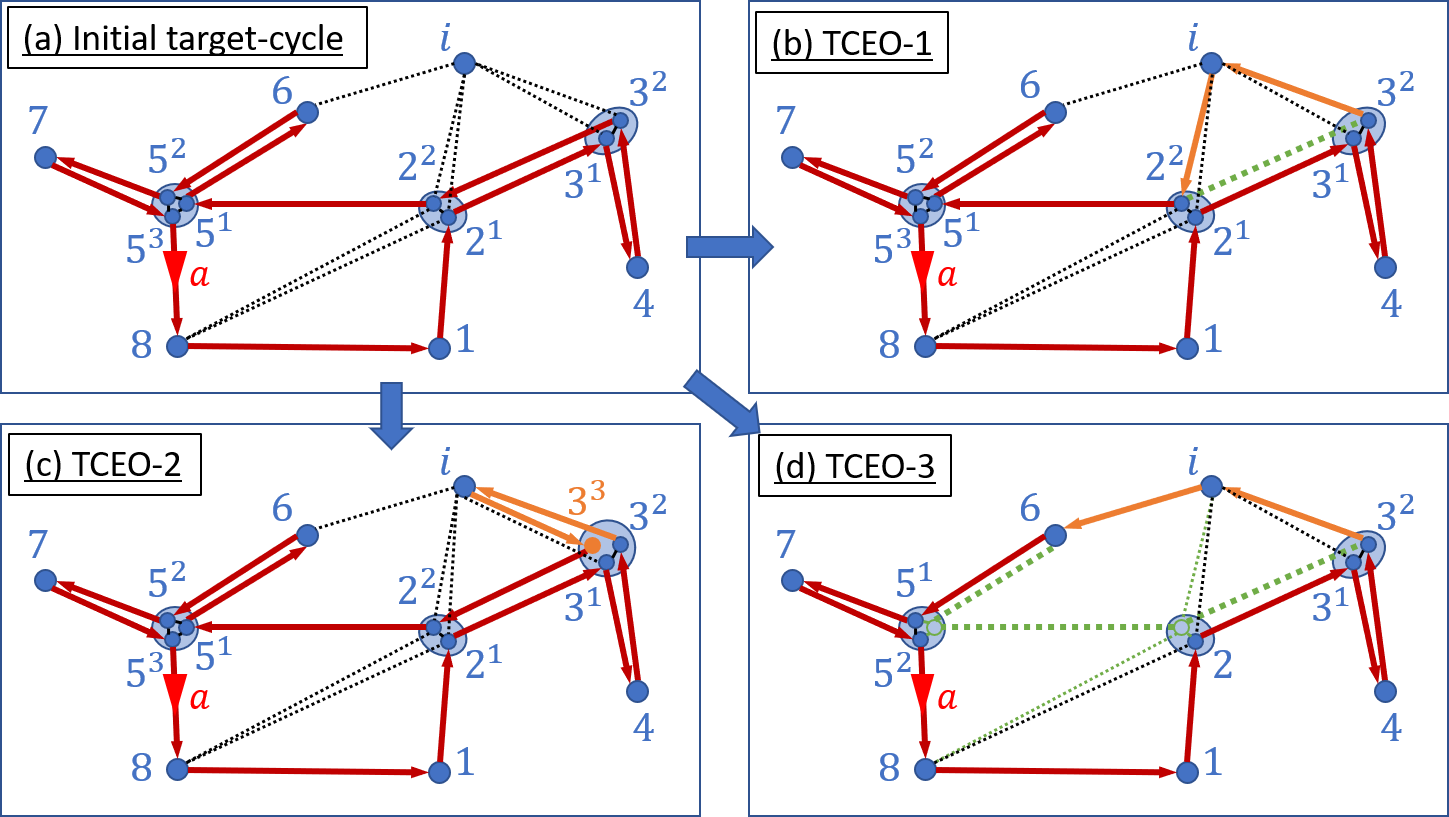}
    \caption{Target-cycle expansion operations.}
    \label{Fig:AdvancedCycleExpanding}
\end{figure*}

\paragraph*{\textbf{Target-Cycle Expansion Operation - Type 1 (TCEO-1)}}
Note that the greedy scheme given in Algorithm \ref{Alg:GreedyTargetCycleConstruction1} iteratively uses a target-cycle expansion operation (TCEO) (shown in Fig. \ref{Fig:basicCycleExpanding}, and also in Fig. \ref{Fig:AdvancedCycleExpanding} (a) $ \rightarrow $ (b)) to construct a sub-optimal unconstrained target-cycle for \eqref{Eq:OptimumUnconstrainedCycleSearch}. Let us label this type of a TCEO as a TCEO-1. The gain in the objective function due to a TCEO-1 is given in Theorem \ref{Th:MarginalGain1} as a function $\Delta J_T(i \vert \xi, (j,l))$ (from now onward, we denote this by $\Delta J_T^1(i \vert \xi, (j,l))$). Also, when started with an unconstrained target-cycle, the TCEO-1 will always result an unconstrained target-cycle. However, in \eqref{Eq:OptimumCycleSearch}, the search space $\mathcal{D}$ contains both unconstrained and constrained target-cycles. Therefore, new TCEOs should be introduced (apart from the TCEO-1).

\paragraph*{\textbf{Target-Cycle Expansion Operation - Type 2 (TCEO-2)}}
Consider a generic target-cycle $\bar{\Xi}$ (with its list of edges being $\bar{\xi}$ and their respective converted versions being $\Xi$ and $\xi$). In TCEO-2, connecting an external target $i \in \mathcal{V} \backslash \bar{\Xi}$ to the $\bar{\Xi}$ is done via creating an additional auxiliary target to one of the targets $j \in \bar{\Xi}$ when $(j,i) \in \mathcal{E}$. Specifically, if this expansion happens at target $j^k \in \Xi$, then, to get the expanded cycle $\bar{\Xi}^{\prime}$ (and $\bar{\xi}^{\prime}$),  
(\romannum{1}) all the auxiliary targets $j^l \in \Xi, l > k$ should be relabelled to $j^{l+1}$ in both $\Xi$ and $\xi$,
(\romannum{2}) two new targets $i$ and $j^{k+1}$ should be inserted after the target $j^k$ in $\Xi$,
(\romannum{3}) two new edges $(j^k,i)$ and $(i,j^{k+1})$ should be inserted after the edge $(\cdot,j^k)$ in $\xi$.

The result of a TCEO-2 $\bar{\Xi}^{\prime}$) will always be a constrained target-cycle. A marginal gain function of the form $\Delta J_T^2(i \vert \Xi, j^k)$ can be proposed to evaluate the gain of a such TCEO-2 step as, (similar to Theorem \ref{Th:MarginalGain1}) 
\begin{equation}\label{Eq:MarginalGain2}
    \Delta J_T^2(i \vert \Xi, j^k) = 
    \left( R_{i,0} + \frac{A_i T}{2} \right) + J_{ss}(\bar{\Xi}) - J_{ss}(\bar{\Xi}').
\end{equation}
An example of this TCEO-2 is shown in Fig. \ref{Fig:AdvancedCycleExpanding} (a) $ \rightarrow $ (c) where an additional auxiliary target ($3^3$) has been created to expand the current target-cycle so that it includes the external target $i$. 

Take $\Vert \bar{\Xi} \Vert$ as the number of distinguishable entries in the set $\bar{\Xi}$. It is important to note that when the graph $\mathcal{G}$ is \emph{connected}, and the target-cycle $\bar{\Xi}$ is such that $\Vert \bar{\Xi} \Vert < \vert \mathcal{V} \vert$, we can always increase $\Vert \bar{\Xi} \Vert$ using TCEO-2 on $\bar{\Xi}$. Also note that when $\Vert \bar{\Xi} \Vert = \vert \mathcal{V} \vert$, the target-cycle goes through all the targets in the given graph. Therefore, this property of TCEO-2 is useful in constructing an improved greedy scheme which can overcome the situations like the ones shown in Fig. \ref{Fig:CycleConstructionFailure} (where Algorithm \ref{Alg:GreedyTargetCycleConstruction1} halts prematurely).

\paragraph*{\textbf{Target-Cycle Expansion Operation - Type 3 (TCEO-3)}}
Let us consider a situation where we have a generic target-cycle $\bar{\Xi}$. We use the notation $[j^k, l^m]$ to represent the ordered set of targets in $\Xi$ that are exclusively in-between the two non-adjacent targets $j^k \in \Xi$ and $l^m \in \Xi$. If: 
(\romannum{1}) an external target $i$ has feasible edges to targets $j^k, l^m \in \Xi$ (i.e., $(j^k,i), (i,l^m) \in \mathcal{E}$) and,  
(\romannum{2}) all the targets in $[j^k,l^m]$ have other auxiliary targets (i.e., $[j^k,l^m] \subseteq Aux \triangleq \Xi\backslash \bar{\Xi}$), then, we can perform the TCEO-3. Specifically, in TCEO-3, following operations are carried out:
(\romannum{1}) The set of targets $[j^k,l^m]$ in $\Xi$ are replaced with $i$,
(\romannum{2}) The respective edges in $\xi$ are replaced with two edges $(j^k,i),(i, l^m)$, and, 
(\romannum{3}) All the auxiliary targets of the targets in $[j^k,l^m]$ are relabelled.

Note that, to perform TCEO-3, $\bar{\Xi}$ should be a constrained target-cycle. However, the resulting expanded target-cycle (say $\bar{\Xi}'$) is not guaranteed to be a constrained target-cycle, as TCEO-3 always cancels out a set of auxiliary targets (specifically the set $[j^k,l^m]$). Similar to \eqref{Eq:MarginalGain1} and \eqref{Eq:MarginalGain2}, following the Theorem \ref{Th:MarginalGain1}, a marginal gain function $\Delta J_T^3(i \vert \Xi, [j^k,l^m])$ where, 
\begin{equation}\label{Eq:MarginalGain3}
    \Delta J_T^3(i \vert \Xi, [j^k,l^m]) = 
    \left( R_{i,0} + \frac{A_i T}{2} \right) + J_{ss}(\bar{\Xi}) - J_{ss}(\bar{\Xi}'),
\end{equation}
can be used to represent the gain in the objective function due to the aforementioned generic TCEO-3. 

An example of this TCEO-3 is shown in Fig. \ref{Fig:AdvancedCycleExpanding} (a) $ \rightarrow $ (d), where the set of auxiliary targets $[3^2,6] = \{2^2,5^1\}$ are cancelled out with the insertion of the external target $i$ in the expanded target-cycle.

\paragraph*{\textbf{Improved Greedy algorithm}}
Now, we propose the improved greedy scheme which can construct a sub-optimal target-cycle for the problem in \eqref{Eq:OptimumCycleSearch} given the target topology. Unlike in Algorithm \ref{Alg:GreedyTargetCycleConstruction1}, in this improved greedy scheme (given below in Algorithm \ref{Alg:GreedyTargetCycleConstruction2}), the search space of each iteration is not limited to unconstrained target-cycles. Therefore, this improved greedy scheme will work even under sparse graph conditions.   

\begin{algorithm*}[!h]
\caption{The proposing improved greedy algorithm for target-cycle construction under assumptions \ref{As:AgentBehavior1} and \ref{As:EstimationError} with a single agent.}
\begin{algorithmic}[1]
\State \textbf{Input}: Target topology $\mathcal{G}=(\mathcal{V},\mathcal{E})$, where, $\mathcal{V} = \{1,2,\cdots,M\}$ and $\mathcal{E} \subseteq \{(i,j): i,j \in \mathcal{V}\}$
\State \textbf{Output}: A sub-optimal target-cycle $\bar{\Xi}^\#$ (and $\bar{\xi}^\#$).
\State $(j^*,l^*) := \arg\min_{(j,l) \in \mathcal{E}}J_{ss}(\{j,l\})$ 
\Comment{Identifying the best 2-Target-cycle to cover}
\State $\bar{\Xi} := \{j^*,l^*\}$,  
$\xi := \{(j^*,l^*),(l^*,j^*)\}$ \Comment{Initial target-cycle} 
\Do 
    \State $[val_1,\ (i^*,(j^*,l^*))_1] := 
    \underset{\substack{(i,(j,l)): i \in \mathcal{V} \backslash \bar{\Xi},\\ (j,l) \in \xi}}{\mathrm{argmax}} 
    \left[\Delta J_T^1(i \vert \xi,(j,l))\right]$
    \Comment{Identifying the best available TCEO-1}
    \State $[val_2,\ (i^*,j^{k*})_2] := 
    \underset{\substack{(i,j^k):i \in \mathcal{V} \backslash \bar{\Xi},\\ j^k \in \Xi}}{\mathrm{argmax}}
    \left[\Delta J_T^2(i \vert \Xi,j^k)\right]$ 
    \Comment{Identifying the best available TCEO-2}
    \State $[val_3,\ (i^*,[j^{k*},l^{m*}])_3] := 
    \underset{\substack{(i,j^k,l^m): i \in \mathcal{V} \backslash \bar{\Xi}, \\
    j^k, l^m \in \Xi,\\ [j^k, l^m] \subseteq Aux} }{\mathrm{argmax}}
    \left[\Delta J_T^3(i \vert \Xi,[j^k, l^m])\right]$
    \Comment{Identifying the best available TCEO-3}
    \If{$(val_1 \geq val_2)$ and $(val_1 \geq val_3)$ and $(val_1 > 0)$ } 
    \Comment{The found TCEO-1 is the most profitable!}
        \State \textbf{Execute} TCEO-1 on $\bar{\Xi}$ using $(i^*,(j^*,l^*))_1$ 
    \ElsIf{$(val_2 \geq val_3)$ and $(val_2 > 0)$}
    \Comment{The found TCEO-2 is the most profitable!}
        \State \textbf{Execute} TCEO-2 on $\bar{\Xi}$ using $(i^*,j^{k*})_2$
    \ElsIf{$(val_3 > 0)$}
    \Comment{The found TCEO-3 is the most profitable!}
        \State \textbf{Execute} TCEO-3 on $\bar{\Xi}$ using $(i^*,[j^{k*},l^{m*}])_3$
    \Else
        \State \textbf{Break} \Comment{No TCEO was found with a positive marginal gain.}
    \EndIf
\doWhile{\textbf{True}}
\State $\bar{\Xi}^\# := \bar{\Xi};\ \bar{\xi}^\#:= \bar{\xi}$; \textbf{Return}; 
\end{algorithmic}\label{Alg:GreedyTargetCycleConstruction2}
\end{algorithm*}

It starts from searching for the best target-cycle of length 2 (i.e. $\bar{\Xi} \ni \vert \bar{\Xi} \vert = 2$) and the obtained solution is then used as the initial target-cycle. Next, the current target-cycle is expanded by adding an external target in each iteration. The steps 6,7 and 8 searches for the best method of expanding the current target-cycle under TCEO-1, TCEO-2, and TCEO-3, respectively. In step 8, the set variable $Aux = \Xi \backslash \bar{\Xi}$ represents the set of all introduced auxiliary targets. Then, steps 9-15 aims to execute the TCEO with the highest marginal gain.

\paragraph*{\textbf{Refining the Obtained Generic Target-Cycle}}
If the sub-optimal target-cycle given by the greedy Algorithm \ref{Alg:GreedyTargetCycleConstruction2} (say $\bar{\Xi}^\#$) is an unconstrained one, we can apply the previously discussed TSP inspired 2-Opt and 3-Opt techniques (see Fig. \ref{Fig:2Opt3Opt}) to further improve (refine) the obtained solution $\bar{\Xi}^\#$. However, when $\bar{\Xi}^\#$ is a constrained target-cycle, applicability of such 2-Opt or 3-Opt techniques is not straight forward - because some targets are being visited more than once during the target-cycle $\bar{\Xi}^\#$ (contrary to the TSP problem scenarios). 

To overcome this challenge, we execute the following set of steps:
\begin{enumerate}
    \item Execute a 2-Opt (or a 3-Opt) operation on the converted version of $\bar{\Xi}^\#$ (i.e., on $\Xi^{\#}$) assuming all the auxiliary targets in a set $\mathcal{T}_i$ are fully connected (with zero distance links) for all targets $i \in \bar{\Xi}^\#$. Take the resulting target-cycle as $\bar{\Xi}^1$.
    \item If $\bar{\Xi}^1$ utilizes one of the aforementioned zero distance links (say the link between $(i^j,i^k)$), it means we can now merge the two auxiliary targets $(i^j,i^k)$ into a single auxiliary target $i^l$ (where $l=min(j,k)$). Take the resulting target-cycle as $\bar{\Xi}^2$.
    \item (Inspired by the TCEO-3) If there exists two distinct non-adjacent  targets $j^k, l^m \in \Xi^{2}$ such that 
    (\romannum{1}) $(j^k, l^m) \in \mathcal{E}$, and (\romannum{2}) $[j^k, l^m] \subseteq Aux = \Xi^{2} \backslash \bar{\Xi}^2$, then,
    we can 
    (\romannum{1}) remove all the auxiliary targets $[j^k, l^m]$ from $\Xi^{2}$, and,  
    (\romannum{2}) replace the respective edges in $\xi^{2}$ with just the edge $(j^k,l^m)$. Take resulting target-cycle as $\bar{\Xi}^3$.
    \item Now, if $J_{ss}(\bar{\Xi}^3) \leq J_{ss}(\bar{\Xi}^\#)$ update $\bar{\Xi}^\# := \bar{\Xi}^3$ and continue to step (1) above.
\end{enumerate}

As an example, see the process shown in Fig. \ref{Fig:RefinementsOnConstrainedTargetCycles} (a) $\rightarrow$ (b) $\rightarrow$ (c) $\rightarrow$ (d). It is a case where a 2-Opt operation has been carried out with a consequent auxiliary target merging step (i.e., refinement step (1) and (2) given above) - to generate a new perturbed target-cycle. Also, the direct step Fig. \ref{Fig:RefinementsOnConstrainedTargetCycles} (a) $\rightarrow$ (d) can be seen as an example for the refinement step (3) given above - where the auxiliary target set $[4,2] = \{3^2\}$ is being removed due to the existence of the direct link $(4,2) \in \mathcal{E}$. 

\begin{figure}[!h]
    \centering
    \includegraphics[width=3.2in]{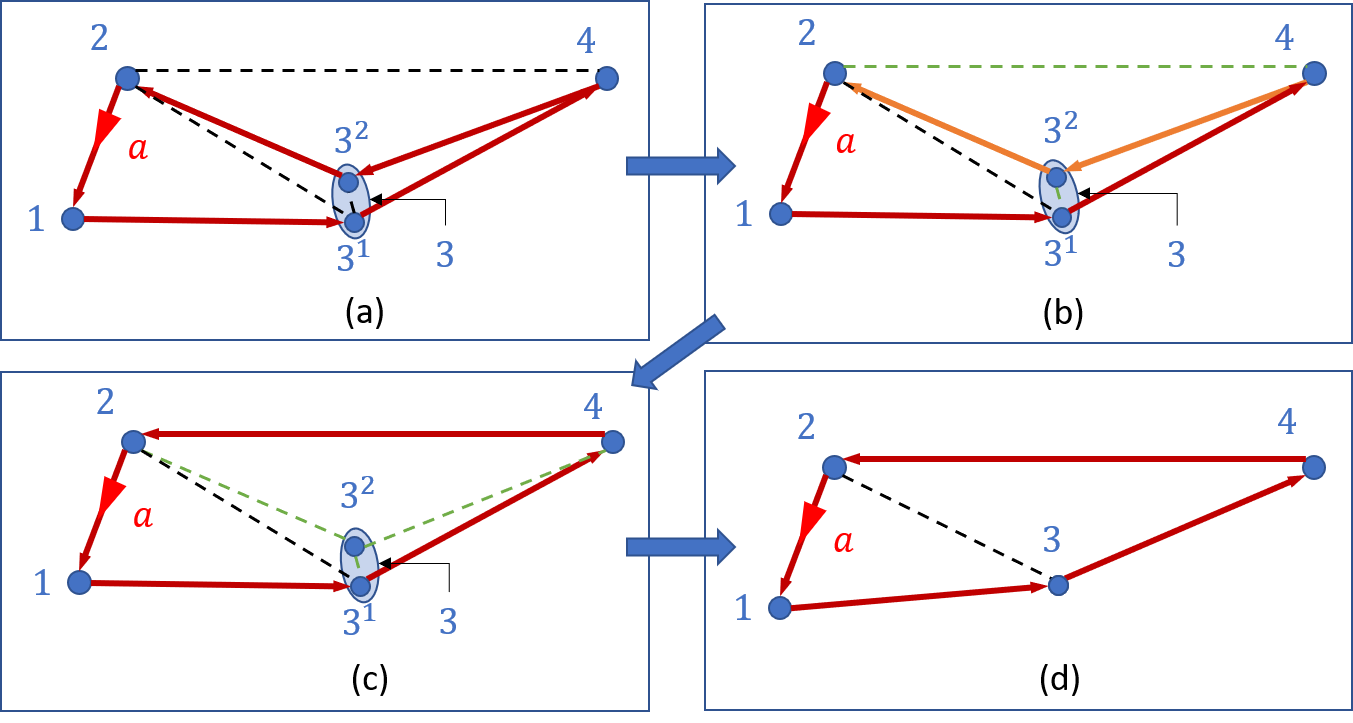}
    \caption{Refinement steps on a constrained target-cycle.}
    \label{Fig:RefinementsOnConstrainedTargetCycles}
\end{figure}

Once each of the possible 2-Opt and 3-Opt operations for the current target-cycle $\bar{\Xi}^\#$ have been evaluated according to the above set of steps, and when none of those resulting target-cycles (i.e., $\bar{\Xi}^3$ cycles) has a lower value than $J_{ss}(\bar{\Xi}^\#)$, then, we call the target-cycle $\bar{\Xi}^\#$ as the refined sub-optimal target-cycle and we represent it using the symbol $\bar{\Xi}^R$.

\subsection{The Initial Threshold Control Policy: $\Theta^{(0)}$}
As we did in the previous section, our final goal is to convert the refined sub-optimal target-cycle $\bar{\Xi}^R$ into a set of threshold control policy values: $\Theta^{(0)}$ so that it can be used in \eqref{Eq:GradientDescent}. Since we still consider only the single agent situations, $\mathcal{A} = \{a\}$, therefore, $\Theta^{(0)} = \Theta^{a(0)}$. Even though now $\bar{\Xi}^R$ can be a constrained target-cycle (as opposed to $\bar{\Xi}^R$ being a simple unconstrained target-cycle as before), we can still use the Algorithm \ref{Alg:initialThresholPolicyGeneration} to get the corresponding $\Theta^{a(0)}$. 

Notice that, now, if a target $i \in \bar{\Xi}^R$ has $n$ auxiliary targets (i.e., when $\vert \mathcal{T}_i \vert = n $) then, $(n+1)$ elements of the $i$\textsuperscript{th} row of $\Theta^{a(0)}$ will be $0$. This is because, from the target $i$, an agent now have to have the ability to go to destinations $j \in \{\Xi_i^{R,k}[1]: k \ni i^k \in  \mathcal{T}_i  \}$ (recall that $\Xi_i^{R,k}$ represents the sub-cycle of the auxiliary target $i^k \in \Xi^{R}$).

On the other hand, this means, when the agent is at target $i$, the target prioritization operation done in step 1 of \eqref{Eq:DiscreteEventSystem} will now compare $n$ distinct destination target uncertainties and pick the target with the maximum uncertainty as the next destination. An example (constrained) target-cycle and the respecting $\Theta^{a(0)}$ obtained from using Algorithm \ref{Alg:initialThresholPolicyGeneration} is given in Fig. \ref{Fig:Thresholds2}.

\begin{figure}[!h]
    \centering
    \includegraphics[width=3.2in]{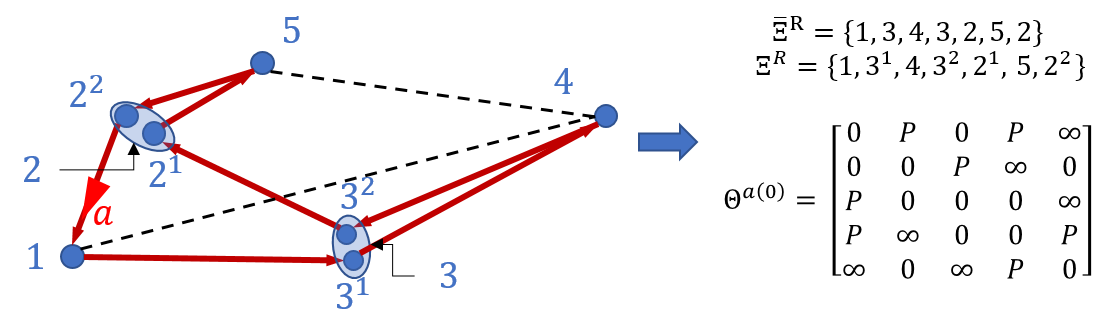}
    \caption{The generated threshold matrix $\Theta^{a(0)}$ for the refined sub-optimal target-cycle $\bar{\Xi}^R$ shown (left).}
    \label{Fig:Thresholds2}
\end{figure}

However, it should be noted that the aforementioned scheme to get $\Theta^{a(0)}$ (i.e. using the Algorithm \ref{Alg:initialThresholPolicyGeneration}) will only work when for any target $i \in \bar{\Xi}^R$, the uncertainty rates of the immediate destinations (i.e., $\{A_j: j \ni j \in \{\Xi_i^{R,k}[1]: k \ni i^k \in  \mathcal{T}_i  \} \}$) are not drastically different from each other. 

Even if it is the case, it can be shown that when target ${j_k} = \Xi_i^{R,k}[1],\ k \ni i^k \in  \mathcal{T}_i$ is used to denote immediate neighbors of target $i$ on the target-cycle $\bar{\Xi}^R$, selecting thresholds such that:
\begin{equation}\label{Eq:ThresholdSelection}
    \Theta^{a(0)}[i][j_k] = \theta^{1(0)}_{ij_k} = A_{j_k}(T_{\bar{\Xi}^R}- \tau_{j_k} - t_{j_k} - \frac{1}{2} T_i^{R,k-1})
\end{equation}
resolves this issue. Then, \eqref{Eq:ThresholdSelection} can be used in step 7 of Algorithm \ref{Alg:initialThresholPolicyGeneration} as opposed to setting $\theta^{1(0)}_{ij_k}=0$, $\forall k \ni i^k \in  \mathcal{T}_i$ and $\forall i \in \bar{\Xi}^R$. (Recall the notation: $T_{\Xi}$ is the total time spent on a target-cycle $\bar{\Xi}$, $\tau_i$ is the steady state dwell time at a target $i$, and, $t_i$ is the time to reach target $i$ from the previous target on the considered target-cycle).

\subsection{Simulation Results}

Figure \ref{Fig:ConstrainedCycleGenerationProcess} (a)$\rightarrow$(d) shows the intermediate cycles generated by the greedy sub-optimal cycle construction process given in Algorithm \ref{Alg:GreedyTargetCycleConstruction2} when applied for the SASE1 problem configuration (See Fig. \ref{Fig:SASE1RandomInitialization}, \ref{Fig:UnconstrainedCycleGenerationProcess} and \ref{Fig:SASE1UnconstrainedCycleInitialization}). The target-cycle shown (as a red contour) in Fig. \ref{Fig:ConstrainedCycleGenerationProcess} (d) is $\bar{\Xi} = \{2, 1, 2, 5, 3, 4, 5\}$ and it has a $J_{ss}$ value of $121.1$. Note that $J_{ss}(\bar{\Xi})$ is better than the $J_{ss}$ value of the unconstrained target-cycle identified before (in Fig. \ref{Fig:UnconstrainedCycleGenerationProcess}(f)) by $+7.6$ ($5.9\%$). This improvement is a result of the increased greedy search space size in Algorithm \ref{Alg:GreedyTargetCycleConstruction2} w.r.t. Algorithm \ref{Alg:GreedyTargetCycleConstruction1}. Also note that in $\bar{\Xi}$, both target $2$ and target $5$ are being visited twice per each cycle.

The identified target-cycle $\bar{\Xi}$ is then converted to the respective TCP using Algorithm \ref{Alg:initialThresholPolicyGeneration}. Fig. \ref{Fig:SASE1ConstrainedCycleInitialization}(b) shows that the target-cycle $\bar{\Xi}$ has a $J_T$ value of $114.9$ which cannot be further improved using the gradient steps \eqref{Eq:GradientDescent}. To ensure $\bar{\Xi}$ is a local optimal, after $100$ iterations (at $l=100$), the derived TCP $\Theta^{(0)}$ is randomly perturbed. Then it can be seen that $\Theta^{(l)}$ converges back to the same initial TCP found (with $J_T=114.6$). It is important to note that this solution is better than the best TCP obtained with a random initialization of $\Theta^{(0)}$ (shown in Fig \ref{Fig:SASE1RandomInitialization}), by $+13.8\ (10.7\%)$. And when compared to the unconstrained target cycle based solution shown in Fig. \ref{Fig:SASE1UnconstrainedCycleInitialization}, it is an improvement of $+6.7\ (5.0\%)$.

In order to illustrate the importance of gradient ascent steps, consider the new single agent simulation example (SASE3) shown in Fig. \ref{Fig:SASE3RandomInitialization}. When the TCP $\Theta^{(0)}$ is selected randomly, the gradient steps have converged to $J_T = 497.9$. Now, Fig. \ref{Fig:SASE3ConstrainedCycleInitialization}(a) shows the performance of the TCP given by the identified refined sub-optimal greedy constrained cycle (obtained using Algorithm \ref{Alg:GreedyTargetCycleConstruction2} and \ref{Alg:initialThresholPolicyGeneration}). As the usual next step, when gradient steps are used \eqref{Eq:GradientDescent}, compared to SASE1, we can now observe a further improvement in $J_T$ (See Fig. \ref{Fig:SASE3ConstrainedCycleInitialization}(b) and (c)) which leads to a TCP $\Theta^*$ with $J_T = 449.5$. Therefore, the overall improvement achieved from deploying the proposing initialization technique is $+48.4\ (9.7\%)$. 

\begin{figure}[!h]
     \centering
     \begin{subfigure}[b]{0.23\columnwidth}
         \centering
         \includegraphics[width = \textwidth]{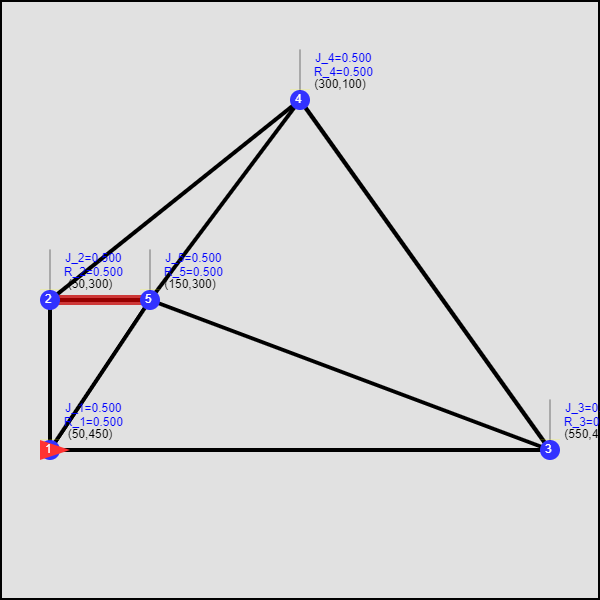}
         \caption{Step 1}
         
     \end{subfigure}
     \hfill
     \begin{subfigure}[b]{0.23\columnwidth}
         \centering
         \includegraphics[width = \textwidth]{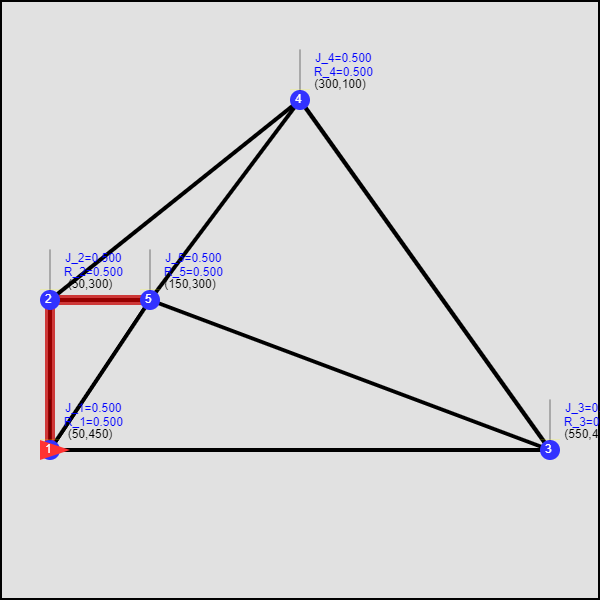}
         \caption{Step 2}
         
     \end{subfigure} 
     \hfill
     \begin{subfigure}[b]{0.23\columnwidth}
         \centering
         \includegraphics[width = \textwidth]{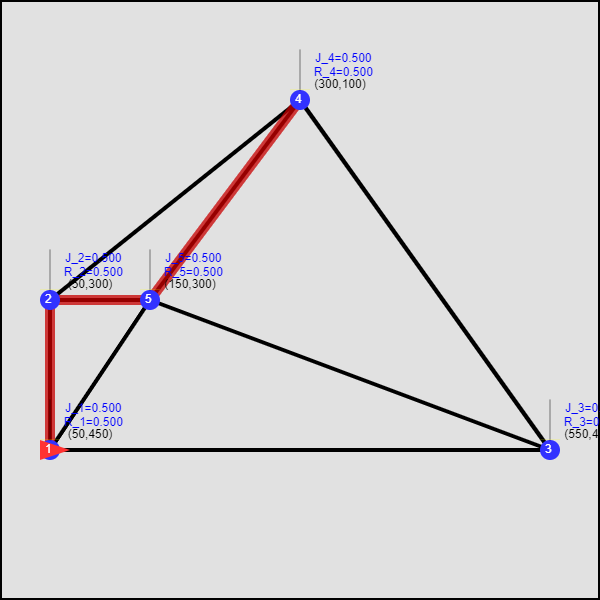}
         \caption{Step 3}
         
     \end{subfigure} 
     \hfill
     \begin{subfigure}[b]{0.23\columnwidth}
         \centering
         \includegraphics[width = \textwidth]{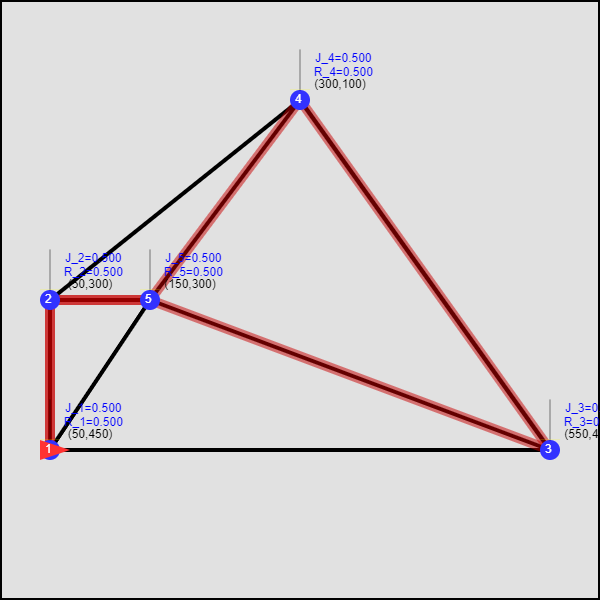}
         \caption{Step 4}
         
     \end{subfigure} 
    \caption{Greedy target-cycle construction iterations (by Algorithm \ref{Alg:GreedyTargetCycleConstruction2}) observed for the target topology of SASE1.}
    \label{Fig:ConstrainedCycleGenerationProcess}
\end{figure}

\begin{figure}[!h]
     \centering
     \begin{subfigure}[b]{0.35\columnwidth}
         \centering
         \includegraphics[width = \textwidth]{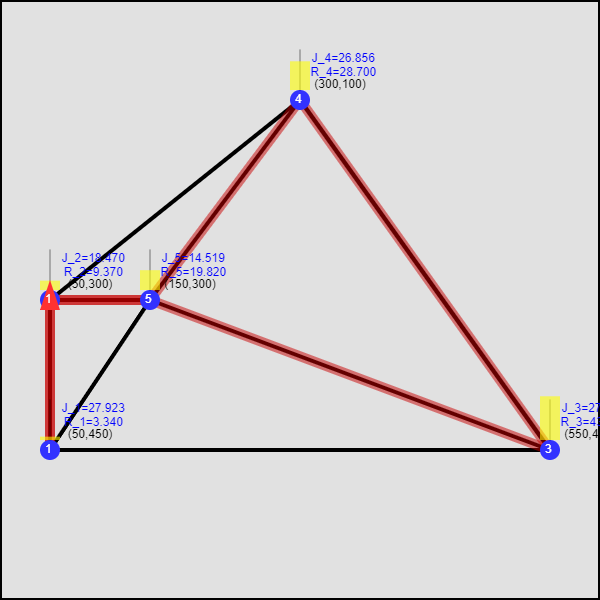}
         \caption{Config. at $t=T$.}
         
     \end{subfigure}
     \hfill
     \begin{subfigure}[b]{0.6\columnwidth}
         \centering
         \includegraphics[width = \textwidth]{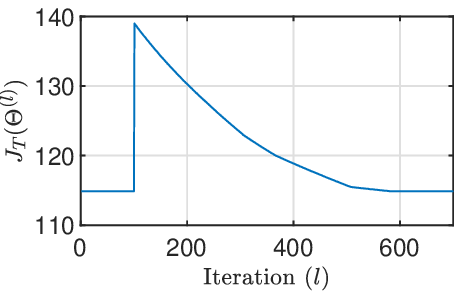}
         \caption{Cost vs iterations plot.}
         
     \end{subfigure}
    \caption{SASE1: The TCP $\Theta^{(0)}$ given by the identified cycle $\bar{\Xi}^R$ (the red trace in (a)) shows local optimality. At $l=100, \Theta^{(l)}$ is randomly perturbed. Yet, converges back to the initial TCP. Cost $J_T=114.9$ (Improvement $=+6.7$ compared to Fig. \ref{Fig:SASE1UnconstrainedCycleInitialization}).}
    \label{Fig:SASE1ConstrainedCycleInitialization}
\end{figure}

\begin{figure}[!h]
     \centering
     \begin{subfigure}[b]{0.35\columnwidth}
         \centering
         \includegraphics[width = \textwidth]{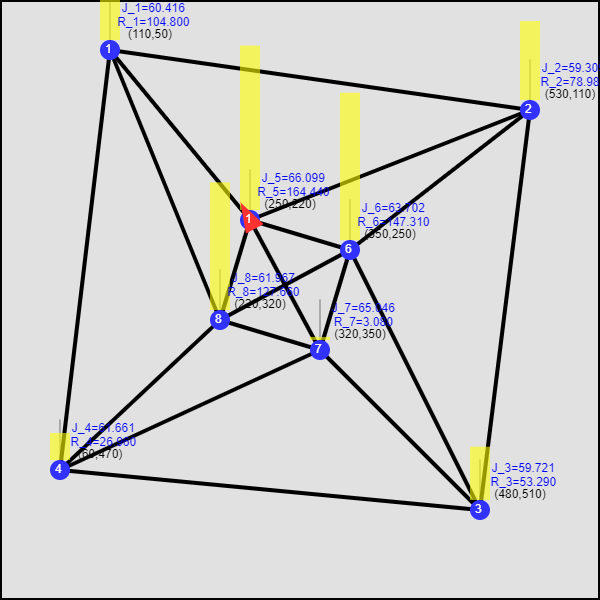}
         \caption{Config. at $t=T$.}
         
     \end{subfigure}
     \hfill
     \begin{subfigure}[b]{0.6\columnwidth}
         \centering
         \includegraphics[width = \textwidth]{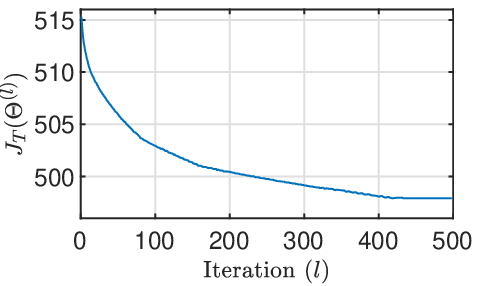}
         \caption{Cost vs iterations plot.}
         
     \end{subfigure}
    \caption{Single agent simulation example 3 (SASE3): Starting with a random $\Theta^{(0)}$, converged to a TCP with the cost $J_T = 497.9$.}
    \label{Fig:SASE3RandomInitialization}
\end{figure}

\begin{figure}[!h]
     \centering
     \begin{subfigure}[b]{0.3\columnwidth}
         \centering
         \includegraphics[width = \textwidth]{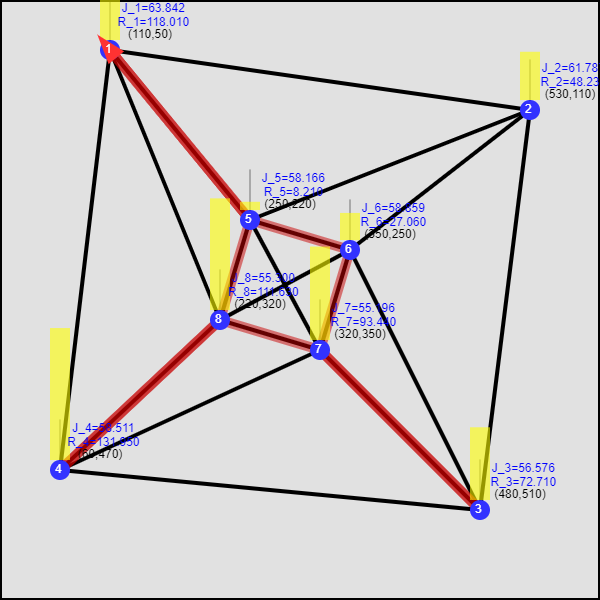}
         \caption{$l=0$, $t=T$.}
         
     \end{subfigure}
     \hfill
     \begin{subfigure}[b]{0.3\columnwidth}
         \centering
         \includegraphics[width = \textwidth]{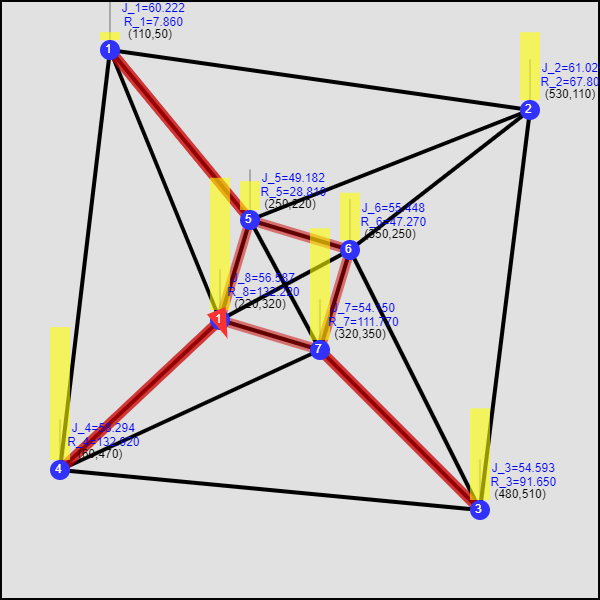}
         \caption{$l=500$, $t=T$}
         
     \end{subfigure}
     \hfill
     \begin{subfigure}[b]{0.35\columnwidth}
         \centering
         \includegraphics[width = \textwidth]{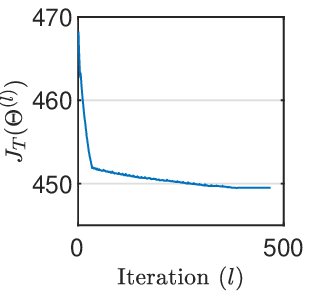}
         \caption{Cost vs iterations.}
         
     \end{subfigure}
    \caption{SASE3:The TCP $\Theta^{(0)}$ given by the identified cycle $\bar{\Xi}^R$ (the red trace in (a),(b)) with cost $J_T = 468.2$ (improvement $=+29.7$ compared to Fig. \ref{Fig:SASE3RandomInitialization}) is further improved by the IPA based gradient updates \eqref{Eq:GradientDescent}. Final cost $J_T=449.5$ (Improvement $=+18.7$ compared to (a)).}
    \label{Fig:SASE3ConstrainedCycleInitialization}
\end{figure}

\section{Multi-Agent PMN Solution} \label{Sec:MultiAgents}
The work presented so far considers only single agent persistent monitoring problems. Now, to extend the developed solution framework into multi-agent persistent monitoring problems, we propose to follow an approach where we first partition the considered graph topology $\mathcal{G}$ into $N$ sub-graphs, and then, allocate each agent $a \in \mathcal{A}$ into different sub-graphs.

Such a `divide-and-conquer' approach was motivated due to three reasons: 
(\romannum{1}) The experimental results of the work in \cite{Zhou2019} suggests that in most graph structures, lower (better) objective function values (i.e., $J_T$ in \eqref{Eq:ObjectiveFunction1}) are attainable when agents do not share targets (as opposed to share targets) in their steady state behavior, 
(\romannum{2}) Extending the developed single agent PMN solution presented in previous sections would be straightforward as once the graph $\mathcal{G}$ is partitioned, it can be thought of as solving a set of independent single agent PMN problems (with each problem on a separate sub-graph), 
(\romannum{3}) Since the overall goal of this work is to find a better initial condition $\Theta^{(0)}$ for the IPA based gradient descent steps in \eqref{Eq:GradientDescent}, we can expect the consequent $\Theta^{(l)}$ updates in \eqref{Eq:GradientDescent} to break the non-cooperative nature of the initial solution $\Theta^{(0)}$ that we propose - if it is sub-optimal.

\subsection{Overview of the Complete Solution}
The steps of the complete solution that we propose for the multi-agent PMN problems introduced in section \ref{Sec:ProblemFormulation} are outlined in the following algorithm. 

\begin{algorithm}[!h]
\caption{The steps of the proposing complete solution to the problem of multi-agent persistent monitoring on networks.}
\begin{algorithmic}[1]
\State \textbf{Input}: \begin{enumerate}
    \item Target topology $\mathcal{G}=(\mathcal{V},\mathcal{E})$, with $\mathcal{V} = \{1,2,\cdots,M\}$ and  $\mathcal{E} \subseteq \{(i,j): i,j \in \mathcal{V}\}$,
    \item Set of agents $\mathcal{A} = \{1,2,\ldots,N\}$,
    \item Initial target uncertainties $\{R_i(0): \in \mathcal{V}\}$, 
    \item Initial agent locations $\{s_a(0): a \in \mathcal{A}\}$,
    \item Finite Time horizon $T$.
\end{enumerate}
\State \textbf{Output}: A locally optimal threshold control policy: $\Theta^*$
\State Partition the given graph $\mathcal{G}$ into $N$ sub-graphs $\{\mathcal{G}_a\}_{a\in \mathcal{A}}$.
\State For each sub-graph $\mathcal{G}_a$, find the refined target-cycle $\bar{\Xi}_a$.
\State Execute iterative refinements to the graph-partitioning.
\State Re-assign agents to cycles based on the shortest distance from initial agent location to the cycles. 
\State Get threshold policies $\Theta^{a(0)}$ of cycles $\bar{\Xi}_a,\ \forall a \in \mathcal{A}$.
\State Use $\Theta^{(0)}$ in \eqref{Eq:GradientDescent} and update $\Theta^{(l)}$ using IPA gradients \cite{Zhou2019}.
\end{algorithmic}\label{Alg:OverallSolution}
\end{algorithm}

In the Algorithm \ref{Alg:OverallSolution}, to execute the steps 4 and 7, we directly utilize the single-agent persistent monitoring techniques discussed in section \ref{Sec:SparseGraphsSingleAgent}. The final step (i.e., step 8) is executed according to \cite{Zhou2019} and was discussed in section \ref{Sec:ProblemFormulation}. The three remaining steps (i.e., step 3, 5 and 6) involve: (\romannum{1}) partitioning the graph, (\romannum{2}) refining the decided graph partitions, and, (\romannum{3}) assigning agents to the graph partitions (i.e., to the cycles on the graph partitions). These three topics will be discussed respectively in the following subsections. 

\subsection{Graph Partitioning using Spectral Clustering}

\paragraph*{\textbf{Introduction}}In order to partition the given target topology $\mathcal{G}$, we use the \emph{spectral clustering} \cite{Luxburg2007} technique which is a commonly used global graph partitioning method. Compared to the traditional clustering techniques such as the k-means method, the spectral clustering technique has few fundamental advantages: (\romannum{1}) it is simple to implement, (\romannum{2}) it can be solved efficiently, and, (\romannum{3}) it delivers better clustering results. These inherent advantages of spectral clustering technique justify its usage in our work. 

In spectral clustering, the graph partitions are derived based on the spectrum of the similarity matrix (also called the affinity matrix) associated with the given graph. In this paradigm, the $(i,j)$\textsuperscript{th} element of the similarity matrix (denoted by $s_{ij} \geq 0$) will represent the similarity between the $i$\textsuperscript{th} node and the $j$\textsuperscript{th} node of the given graph. In terms of the similarity values, the spectral clustering algorithm can be seen as a clustering method which seeks to partition the graph such that nodes in different partitions have a low similarity value between them while the nodes in the same partition have a higher similarity value between them.

\begin{remark}
In a typical data-point clustering application, the graph representation (also called the \textquotedblleft similarity graph\textquotedblright) arises from the known similarity values between the data-points. However, this is not the case for PMN problems where the physical graph $\mathcal{G}$ (which is different from the similarity graph) is known, while the similarity values between its targets are unknown.   
\end{remark}

\paragraph*{\textbf{Deriving Similarity Values}}
We exploit the knowledge of the target topology $\mathcal{G}$ and target parameters to derive appropriate similarity values. Typically, a similarity value $s_{ij} \geq0$ is obtained based on a \emph{disparity} value $d(i,j)$ as
\begin{equation}
\label{Eq:DisparityToSimilarity}
s_{ij} = \exp{\left(-\frac{\vert d(i,j)\vert^{2} }{2\sigma^{2}}\right)},\ \ i,j \in\mathcal{V},
\end{equation}
where $d:\mathcal{V}\times\mathcal{V}\rightarrow\mathbb{R}$ and $\sigma^{2}$ is a user defined scaling parameter that controls how rapidly the similarity $s_{ij}$ falls off with the disparity between $i$ and $j$ (i.e., with
$d(i,j)$) \cite{Ng2001}. This function \eqref{Eq:DisparityToSimilarity} is known as the \emph{Gaussian similarity function}. Note also that the disparity and similarity values are inversely related. We next focus on defining an appropriate disparity metric for the PMN problems. Now, the next step would be to define a good disparity metric for the class of persistent monitoring problems.  

\begin{remark}\label{Rm:InaccurateDisparityMetrics}
For the considered PMN problem setup, neither of using $d(i,j)$ as the \emph{physical distance} (i.e., $\Vert X_{i}-X_{j} \Vert$) nor the \emph{shortest distance} between the targets $i$ and $j$ provides a good characterization to the underlying persistent monitoring aspects of the problem because they disregard target parameters and agent behaviors when monitoring targets.
\end{remark}

Taking the remark \ref{Rm:InaccurateDisparityMetrics} into account, we propose a novel disparity metric named \emph{minimum mean covering cycle uncertainty} (CCU):
\begin{equation}
\label{Eq:DisparityMetricCCU}d(i,j) = d_{CC}(i,j) \triangleq\min_{\bar{\Xi
}:\,i,j \in\bar{\Xi}} J_{ss}(\bar{\Xi}).
\end{equation}
The $\arg\min$ of the above problem is named the \emph{optimal covering cycle} (OCC) and we denote it as $\bar{\Xi}^{*}_{ij}$.

In other words, the OCC $\bar{\Xi}_{ij}^{*}$ is the best way to cover targets $i$ and $j$ in a single target-cycle so that the corresponding steady state mean cycle uncertainty is minimized. Therefore, if the CCU value is higher for a certain target pair, it implies that it is difficult to cover those two targets in a single target-cycle. Hence, it is clear that this disparity metric $d_{CC}(i,j)$ in \eqref{Eq:DisparityMetricCCU} provides a good characterization to the underlying persistent monitoring aspects of the PMN problems - compared to the other two disparity metrics suggested in remark \ref{Rm:InaccurateDisparityMetrics}. As an example, if all the trajectory segments in $\mathcal{E}$ follow the triangle inequality in terms of respective travel times, then, for any  $(i,j)\in\mathcal{E}$, the corresponding OCC is $\bar{\Xi}^{*}_{ij} = \{i,j\}$ and CCU is $J_{ss}(\{i,j\})$.

In order to compute the proposed disparity metric $d_{CC}(i,j), \ \forall i,j \in \mathcal{V}$, we propose the Algorithm \ref{Alg:ModifiedDijkstras} which is a modified version of the famous Dijkstra's algorithm \cite{Ahuja1993} coupled with cycle expanding and refining techniques discussed in section \ref{Sec:SparseGraphsSingleAgent}. Disparity metric values given by Algorithm \ref{Alg:ModifiedDijkstras} are then used in \eqref{Eq:DisparityToSimilarity} to compute the similarity values: $s_{ij}\ \forall i,j \in \{1,2,\ldots,M\}$. Subsequently, the obtained inter-target similarity values are used to form the similarity matrix $S$ where, 
$$S = [\{s_{ij}\}_{\forall (i,j)}] \in \R^{M \times M}.$$   


\begin{algorithm}[!h]
\caption{Modified Dijkstra's algorithm used to find the proposing disparity metric values: $d_{CC}(i,j)\ \forall j \in \mathcal{V}$}
\begin{algorithmic}[1]
\State \textbf{Input}: A start node $i$, Target topology $G = (\mathcal{V},\mathcal{E})$. 
\State \textbf{Output}: $d_{CC}(i,j),\ \forall j \in \mathcal{V}.$
\State $V:= \emptyset;\ U:=\mathcal{V};$ 
\Comment{Visited/unvisited target sets.}
\State $C_{CC}(j):= \{\}, \ \forall j \in \mathcal{V};$
\Comment{Null cycles.}
\State $C_{CC}(i):= \{ i \};$ 
\Comment{A cycle with only $i$ on it.}
\State $J_{CC}(j):=\infty, \ \forall j \in \mathcal{V};\ \ J_{CC}(i):=0;$
\Comment{CCU values.}
\While{$U \neq \emptyset$} \Comment{Till all targets get their own OCC.}
    \State ${j^*} := \arg\min_{j \in U} J_{CC}(j);$
    \State $\bar{\Xi}_{ij^*}^*:= C_{CC}({j^*});$ 
    \State $V:=V \cup {j^*}; \ \ U:=U\backslash {j^*};$ 
    \Comment{${j^*}$ got its own OCC.}
    \For{$k \ni (j^*,k) \in \mathcal{E}$ } 
        \State $\bar{\Xi}^*_{ik} := $ when $\bar{\Xi}^*_{ij^*}$ expanded (and then refined) to include target $k$ in the best way out of three TCEOs discussed;
        \If{$J_{CC}(k) > J_{ss}(\bar{\Xi}^*_{ik})$}
        \Comment{Found a better OCC!}
            \State $C_{CC}(k) := \bar{\Xi}^*_{ik};$ 
            \Comment{OCC updated.}
            \State $J_{CC}(k) := J_{ss}(\bar{\Xi}^*_{ik});$
            \Comment{CCU updated.}
        \EndIf
    \EndFor
\EndWhile
\State \textbf{Return} $J_{CC}(j),\ \forall j \in \mathcal{V};$
\end{algorithmic}\label{Alg:ModifiedDijkstras}
\end{algorithm}

\paragraph*{\textbf{Spectral Clustering Algorithm}}
In the process of spectral clustering, computation of `Weighted Adjacency Matrix ($W$)' and `Degree Matrix ($D$)' is done using the obtained similarity matrix $S$. Then, those matrices are used to compute the Laplacian matrix $L$ (unnormalized) corresponding to the similarity matrix $S$.  

In literature, often the similarity matrix refers to a matrix with disparity values $d(i,j)$ as its elements. However, we have constructed the similarity matrix $S$ by directly transforming the disparity values got via Algorithm \ref{Alg:ModifiedDijkstras} using \eqref{Eq:DisparityToSimilarity}. Therefore, in our case, the weighted adjacency matrix is the similarity matrix. Thus, 

\begin{equation}\label{Eq:WeightedAdjacencyMatrix}
    W = [\{w_{ij}\}_{\forall (i,j)}] = S = [\{s_{ij}\}_{\forall (i,j)}] \in \R^{M \times M}.
\end{equation}
The degree matrix $D$ is obtained using the elements of $W$ as, 
\begin{equation}\label{Eq:DegreeMatrixForSC}
    D = diag([d_1,d_2,\cdots,d_M]^T); \mbox{ with } d_i = \sum_{j=1}^M w_{ij}.
\end{equation}
The unnormalized Laplacian matrix $L$ is then computed as,  
\begin{equation}\label{Eq:UnnormalizedLaplacianForSC}
    L = D - W.
\end{equation}

It should be noted that there are variants of the spectral clustering algorithm which uses normalized Laplacian matrices denoted by $L_{sym}$ and $L_{rw}$ as opposed to using $L$ \cite{Luxburg2007}. Here, 
\begin{eqnarray}
    L_{sym} &= D^{-\frac{1}{2}}LD^{-\frac{1}{2}},\\
    L_{rw} &= D^{-1}L.
\end{eqnarray}

In our work, we use the normalized spectral clustering method proposed in \cite{ShiMalik2000}, which utilizes the normalized Laplacian $L_{rw}$. We chose this method because it has a somewhat relatable (to persistent monitoring) interpretation based on random walks on the similarity graph. Specifically, a random walk on a graph can be seen as a stochastic process where an agent randomly jumps from vertex to vertex. The normalized spectral clustering method proposed in \cite{ShiMalik2000} can be interpreted as trying to find a partition of the similarity graph such that the random walk stays long within the same cluster and seldom jumps between clusters.

Algorithm \ref{Alg:SpectralClusteringAlgorithm} outlines the normalized spectral clustering procedure (based on \cite{ShiMalik2000}) to get the target clusters $\mathcal{V}_1,\mathcal{V}_2,\ldots,\mathcal{V}_N$. Each target cluster $\mathcal{V}_a,\ a\in\mathcal{A}$ can then be used to form a sub-graph out of the given target topology $\mathcal{G}=(\mathcal{V},\mathcal{E})$ as $\mathcal{G}_a = (\mathcal{V}_a,\mathcal{E}_a)$ where $\mathcal{E}_a\subseteq \mathcal{E}$ is the set of intra-cluster edges taken from complete set of edges $\mathcal{E}, \ \forall a \in \{1,2,\ldots,N\}$. Note that the set of inter-cluster edges (i.e., $\mathcal{E}^I = \mathcal{E}\backslash \cup_{a=1}^N \mathcal{E}_a$) are now not included in any of the formed sub-graphs.

\begin{algorithm}[!h]
\caption{Normalized Spectral Clustering \cite{ShiMalik2000}}
\begin{algorithmic}[1]
\State \textbf{Input}: Normalized Laplacian $L_{rw}$.
\State \textbf{Output}: Target clusters $\mathcal{V}_1,\mathcal{V}_2,\ldots,\mathcal{V}_N$.
\State Compute the first $N$ eigenvectors $u_1,u_2,\cdots,u_N$ of $L_{rw}$.
\State Let $U \in \R^{N \times M}$ be the matrix containing $u_1,u_2,\cdots,u_N$ as columns.
\State For $i = 1,\cdots,M$ let $y_i \in \R^N$ be the vector corresponding to the $i$\textsuperscript{th} row of $U$.
\State Cluster the data points $\{y_i\}_{i=1,\cdots,M} \in \R^N$ using a k-means algorithm (where $k=N$) into $N$ clusters $C_1, \cdots, C_N$.
\State \textbf{Return} Target clusters $\mathcal{V}_1,\mathcal{V}_2,\ldots,\mathcal{V}_N$ with $\mathcal{V}_{i} = \{j: j \ni y_j \in C_i\}$
\end{algorithmic}\label{Alg:SpectralClusteringAlgorithm}
\end{algorithm}

\subsection{Balancing the Obtained Graph Partitions}
Once the sub-graphs are formed, as suggested in step 4 of the complete solution procedure given in Algorithm \ref{Alg:OverallSolution}, we follow the refined sub-optimal target-cycle construction procedure (discussed in section \ref{Sec:SparseGraphsSingleAgent}) for each sub-graph. The resulting target-cycle on a sub-graph $\mathcal{G}_{a}$ is denoted as $\bar{\Xi}_{a}$ and is assumed to be assigned to an arbitrary agent $a\in\mathcal{A}$. Note, however, that in Section \ref{SubSec:AssigningAgents}, we will explicitly assign target-cycles to the agents.

The obtained set of clusters is called \emph{balanced} if the steady state mean cycle uncertainties $J_{ss}(\bar{\Xi}_a)$ (on the sub-graph $\mathcal{G}_a$ by agent $a$) are approximately identical for all $a \in \mathcal{A}$. Despite the dependence on the nature of the given target topology $\mathcal{G}$, the spectral clustering method is often able to provide a balanced set of clusters. However, when this is not the case, it is intuitive to think of an inter-cluster target exchange scheme - which aims to balance the set of clusters by iteratively modifying them. 

Note that such a target exchange operation between two given clusters will affect the constructed target-cycles on those clusters. However, since we can evaluate the steady state performance of target-cycles, we can use this knowledge to identify globally beneficial inter-cluster target exchange operations. 

This process can also be seen as a situation where $N$ agents (i.e., $\mathcal{A}$) trying to exchange their owned set of resources (i.e., the targets) among each other so that a global objective (i.e., $\sum_{a=1}^N J_{ss}(\bar{\Xi}_a)$) is minimized. Note that, in our case, each cluster is assigned to an agent. Therefore, we can think of the cluster changes as decisions taken by the assigned agent.

In such a paradigm, note that the net global effect of a target exchange operation between two neighboring agents (i.e., neighboring clusters) can be independently (from others) computed. Therefore, a distributed greedy algorithm is proposed here to search and execute globally beneficial target exchange operations iteratively. An iteration of the proposed algorithm for a generic agent $a\in \mathcal{A}$ is given in Algorithm \ref{Alg:PostTargetClusterRefinement}.

Note that, when the agent $a \in \mathcal{A}$ wants to expand its cluster $\mathcal{V}_a$ by adding a new target $i$ to it, the agent $a$ needs to choose the most beneficial target-cycle expansion operation out of the three discussed TCEOs to expand $\bar{\Xi}_a$ so that it includes the target $i$ (See step 3 and 18 in Algorithm \ref{Alg:PostTargetClusterRefinement} - also similar to step 12 of Algorithm \ref{Alg:ModifiedDijkstras}). In contrast, when an agent $a \in \mathcal{A}$ wants to remove a target $i$ from its cluster $\mathcal{V}_a$, he needs to recompute his target-cycle completely on $\mathcal{V}_a\backslash \{ i \}$ (See step 9 and 22 of Algorithm \ref{Alg:PostTargetClusterRefinement}).  

In all, the proposing Algorithm \ref{Alg:PostTargetClusterRefinement} helps to balance/distribute the persistent monitoring load among the agents (clusters) uniformly. This load balancing technique also relieves the need to have a properly chosen neighborhood width parameter $\sigma$ in \eqref{Eq:DisparityToSimilarity} for the spectral clustering. Also, since the cluster modifications (step 18 and 22) are carried out when only they lead to global cost improvement, the given algorithm should converge after a finite number of iterations. Specifically, the convergence criterion is the event where all the agents fail to find a feasible solution to the step 12 of Algorithm \ref{Alg:PostTargetClusterRefinement}.

\begin{algorithm}[!h]
\caption{An iteration of the proposed post target cluster/cycle refinement algorithm:}
\begin{algorithmic}[1]
\State \textbf{Input}: Agent $a$'s initial target cluster and the target-cycle: $\{\mathcal{V}_a,\bar{\Xi}_a\}$

    \For{each (neighbor) $i \not \in \mathcal{V}_a$ but $i \in \mathcal{V}_b$}
        \State Find the best way to append $i$ to $\bar{\Xi}_a$ 
        \State \textbf{Store}:  $(J^A_{a,i},\bar{\Xi}^A_{a,i}) :=$ resulting gain and the cycle;  
        \State \textbf{Offer}: $f_{a,i}$: $\{a,i,J^A_{a,i}\}$ to agent $b$ \textbf{if} $J^A_{a,i} > 0$. 
    \EndFor
\State ------

    \For{each $i \in \bar{\Xi}_a$ with an external offer}
        \State Recompute a new cycle on $\mathcal{V}_a \backslash \{i\}$ (detaching). 
         \State \textbf{Store}: $(J^D_{a,i},\bar{\Xi}^D_{a,i}) :=$ resulting gain and the cycle;
    \EndFor
    
    \State Best offer: $f_{b^*,i^*}$:   $\substack{\arg\max\\(b,i):J>0}\ J = J^{Net}_{b,a,i} = (J^A_{b,i}+J^D_{a,i})$
    \State \textbf{Acknowledge} $Ack_{b^*,a,i} = \{b^*,a,i,J^{Net}_{b^*,a,i}\}$ to $b^*$.
\State ------

    \State Best acknowledgement received: $Ack_{a,c^*,i}$ (valued $J^{Net}_{a,c^*,i}$).
    \If{$J^{Net}_{a,c^*,i} > J^{Net}_{b^*,a,i}$} 
    \Comment{Received $Ack$ is better than sent.}
        \If{ agent $c^*$ has no other commitments}
            \State $\mathcal{V}_a := \mathcal{V}_a \cup \{i\}$; (also update $\bar{\Xi}_a$). 
            \Comment{Appended $i$.}
        \EndIf
    \ElsIf{$J^{Net}_{a,c^*,i} < J^{Net}_{b^*,a,i}$} 
    \Comment{Sent $Ack$ is better.}
        \If{Agent $b^*$ has no other commitments}
            \State $\mathcal{V}_a := \mathcal{V}_a \backslash \{i\}$; (also update $\bar{\Xi}_a$).
            \Comment{Detached $i$.}
        \EndIf
    \EndIf 

\end{algorithmic}\label{Alg:PostTargetClusterRefinement}
\end{algorithm}

\subsection{Assigning Agents to The Clusters}
\label{SubSec:AssigningAgents}
So far, we have identified a set of target-cycles $\{\bar{\Xi}_{b}:b\in\mathcal{B}\}$ on the corresponding set of balanced sub-graphs of $\mathcal{G}$, where $\mathcal{B}$ is the set of target-cycle indexes (identical to the set $\mathcal{A}$). We now explicitly assign these target-cycles $\{\bar{\Xi}_{b}:b\in\mathcal{B}\}$ to the agents based on initial agent locations $\{s_{a}(0):a\in\mathcal{A}\}$.

First, let us define the assignment cost between an agent $a\in\mathcal{A}$ and a target-cycle $\bar{\Xi}_{b},\, b\in\mathcal{B}$ as $h_{ab}$ where $h_{ab}$ represents the total travel time on the fastest available path to reach any one of the targets in $\bar{\Xi}_{b}$ starting from $s_{a}(0)$. We use Dijkstra's shortest path algorithm \cite{Ahuja1993} to compute all these assignment weights. Subsequently, the assignment problem (between $a$'s and $b$'s) is solved using the shortest augmenting path algorithm \cite{Ahuja1993}.

\paragraph*{\textbf{Generating an Initial TCP: $\Theta^{(0)}$}} 

Let us assume agent $a\in\mathcal{A}$ is optimally assigned to the target-cycle $\bar{\Xi}_{b}$ and the corresponding fastest path from $s_{a}(0)$ to reach $\bar{\Xi}_{b}$ is $\Phi_{ab} = \{{i_{1}},{i_{2}},\ldots,{i_{n}}\}\subset\mathcal{V}$. Note that $i_{n} \in\bar{\Xi}_{b}$ and $X_{i_{1}}=s_{a}(0)$. Next, let us define $\Phi_{ab}^{\prime}= \Phi_{ab} \backslash\{{i_{n}}\}$. We now use Alg. \ref{Alg:initialThresholPolicyGeneration} with $\bar{\Xi}_{b}$ to get a corresponding TCP for agent $a$ as $\Theta^{a}$. Note that Alg. \ref{Alg:initialThresholPolicyGeneration} only assigns the set of rows: $\{j: j \in\bar{\Xi}_{b}\}$ in $\Theta^{a}$ as it is sufficient to keep the agent on the target-cycle $\bar{\Xi}_{b}$ (this corresponds to rows 1-3 in the example TCP $\Theta^{a}$ shown in Fig. \ref{Fig:Thresholds3}). Therefore, to make sure that agent $a$ follows the path $\Phi_{ab}$, we assign the set of rows: $\{j:j \in\Phi_{ab}^{\prime}\}$ in $\Theta^{a}$ as follows (this corresponds to rows 4-5 in the example TCP $\Theta^{a}$ shown in Fig. \ref{Fig:Thresholds3}). If $j$ and $k$ are two consecutive entries in $\Phi_{ab}$, in the $j$\textsuperscript{th} row of $\Theta^{a}$ we set: $\theta_{jj}^{a}=0, \theta_{jk}^{a}=0$ and any other entry $\theta_{jl}^{a}$ is set to $P$ or $\infty$ depending on whether $(j,l) \in\mathcal{E}$. Finally, we set $\Theta^{a(0)}=\Theta^{a}$.

\begin{figure}[!h]
    \centering
    \includegraphics[width=3.2in]{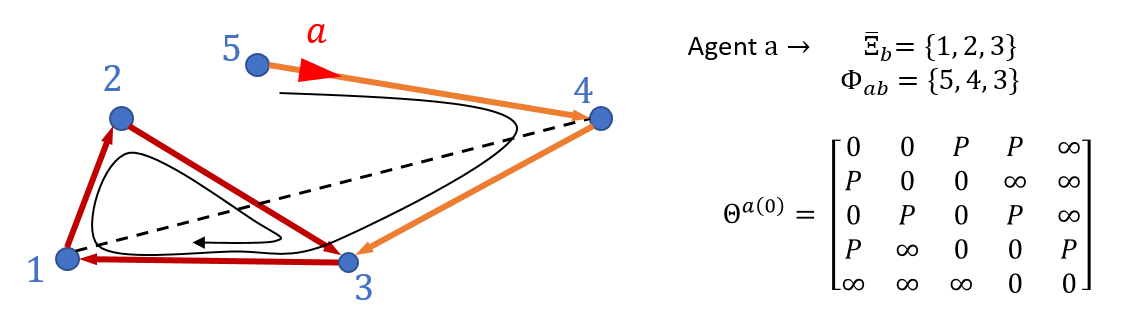}
    \caption{The generated initial TCP $\Theta^{a(0)}$ when the agent $a$ is initially at target $5$ and have been assigned to the target-cycle $\bar{\Xi}_{b} = \{3,1,2\}$ with the fastest path being $\Phi_{ab} = \{5,4,3\}$.}
    \label{Fig:Thresholds3}
\end{figure}

With this, we now have discussed all the steps involved in the overall solution procedure (outlined in Algorithm \ref{Alg:OverallSolution}) that we propose for the PMN problems.

\subsection{Simulation Results}

\paragraph*{\textbf{Effect of Graph Partitioning}} First, we illustrate the effect of graph partitioning by considering the multi-agent simulation example MASE1 given in Fig. \ref{Fig:MASE1RandomInitialization} along with three new such examples named MASE2, MASE3, and MASE4 given in Fig. \ref{Fig:MASE2RandomInitialization}, \ref{Fig:MASE3RandomInitialization} and \ref{Fig:MASE4RandomInitialization} respectively. Note that in MASE2, only two agents are deployed whereas, in the other three configurations, three agents are deployed. Also, note that when the initial TCP $\Theta^{(0)}$ is chosen randomly, the gradient steps have converged to TCPs with $J_T$ values $270.2,\ 91.7,\ 274.0,$ and $201.3$ respectively in each MASE.

Now, when the proposing graph partitioning techniques is applied to each graph (i.e., the step 3 of Algorithm \ref{Alg:OverallSolution}), sub-graphs shown in Fig. \ref{Fig:TargetClusteringResultsWithoughtBalancing1} are formed. Fig. \ref{Fig:TargetClusteringResultsWithoughtBalancing2} shows the generated constrained target-cycles (in red color) within each of the formed sub-graphs under each MASE (i.e., the step 4 of Algorithm \ref{Alg:OverallSolution}). Next, to highlight sole effect of graph partitioning, we skip the graph partitioning refinements (i.e., the step 5 of Algorithm \ref{Alg:OverallSolution}) and continue to generate suitable initial TCPs (i.e., steps 6 and 7 in Algorithm \ref{Alg:OverallSolution}). Finally, when the generated initial TCPs were used in gradient descent \eqref{Eq:GradientDescent} (i.e., step 8 in Algorithm \ref{Alg:OverallSolution}), the observed optimal TCPs had the $J_T$ values as $112.9,\ 45.2,\ 62.5$ and $63.7$ respectively. Fig. \ref{Fig:TargetClusteringResultsWithoughtBalancing2} shows the terminal (i.e., at $t=T$) conditions of the problem configurations under these optimal TCPs. The respective overall improvements achieved from deploying the proposing initialization technique are: $+157.3\ (58.2\%),\ +46.5\ (50.7\%),\ +211.5\ (77.2\%),$ and $+137.6\ (68.4\%)$. Therefore, it is clear that the proposing graph partitioning based method (even without graph partitioning refinements) is capable of delivering much improved solutions.

\begin{figure}[!h]
     \centering
     \begin{subfigure}[b]{0.35\columnwidth}
         \centering
         \includegraphics[width = \textwidth]{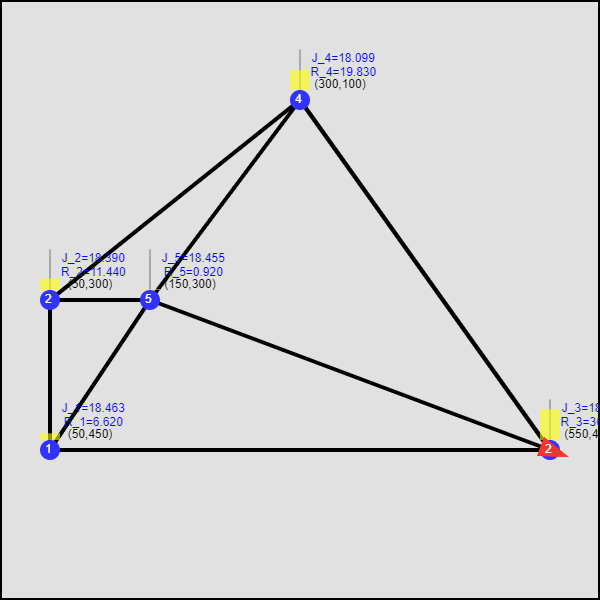}
         \caption{Config. at $t=T$.}
         
     \end{subfigure}
     \hfill
     \begin{subfigure}[b]{0.6\columnwidth}
         \centering
         \includegraphics[width = \textwidth]{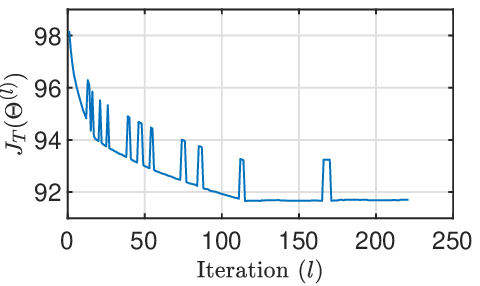}
         \caption{Cost vs iterations plot.}
         
     \end{subfigure}
    \caption{Multi-agent simulation example 2 (MASE2): Starting with a random $\Theta^{(0)}$, converged to a TCP with the cost $J_T = 91.7$.}
    \label{Fig:MASE2RandomInitialization}
\end{figure}

\begin{figure}[!h]
     \centering
     \begin{subfigure}[b]{0.35\columnwidth}
         \centering
         \includegraphics[width = \textwidth]{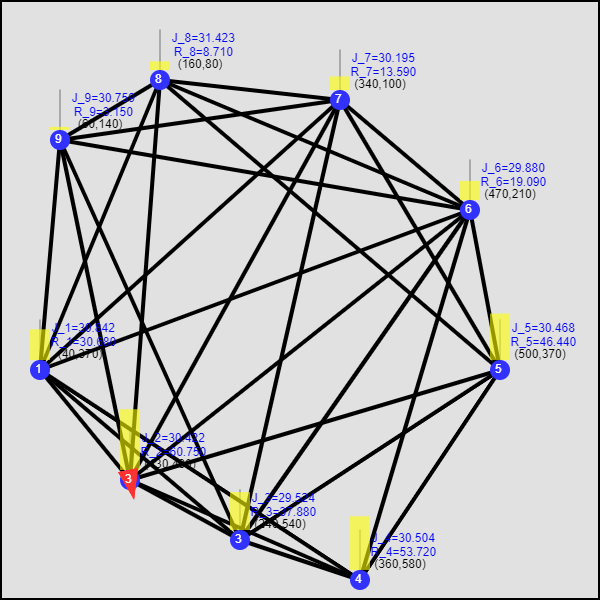}
         \caption{Config. at $t=T$.}
         
     \end{subfigure}
     \hfill
     \begin{subfigure}[b]{0.6\columnwidth}
         \centering
         \includegraphics[width = \textwidth]{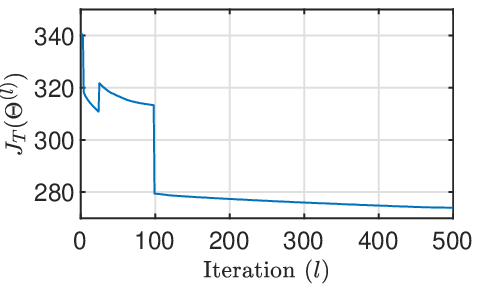}
         \caption{Cost vs iterations plot.}
         
     \end{subfigure}
    \caption{Multi-agent simulation example 3 (MASE3): Starting with a random $\Theta^{(0)}$, converged to a TCP with the cost $J_T = 274.0$.}
    \label{Fig:MASE3RandomInitialization}
\end{figure}

\begin{figure}[!h]
     \centering
     \begin{subfigure}[b]{0.35\columnwidth}
         \centering
         \includegraphics[width = \textwidth]{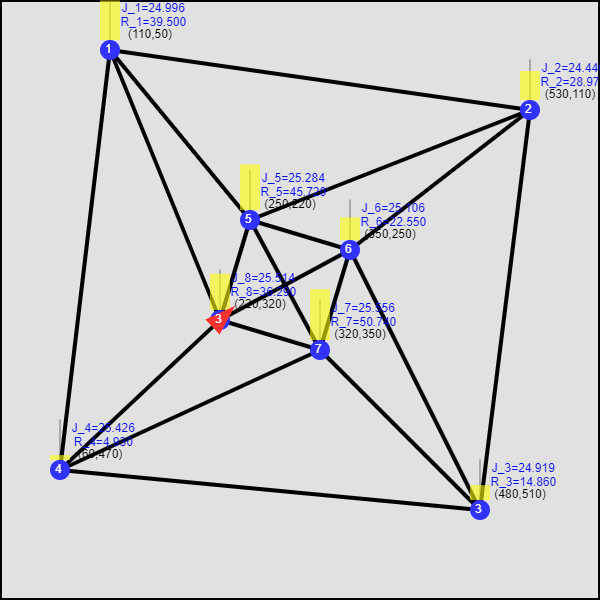}
         \caption{Config. at $t=T$.}
         
     \end{subfigure}
     \hfill
     \begin{subfigure}[b]{0.6\columnwidth}
         \centering
         \includegraphics[width = \textwidth]{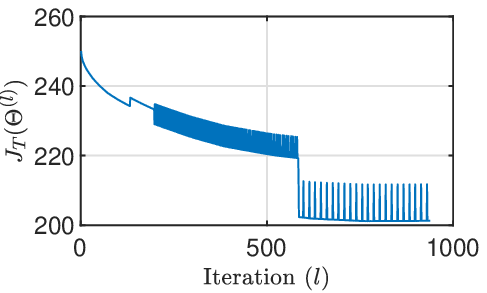}
         \caption{Cost vs iterations plot.}
         
     \end{subfigure}
    \caption{Multi-agent simulation example 4 (MASE4): Starting with a random $\Theta^{(0)}$, converged to a TCP with the cost $J_T = 201.3$.}
    \label{Fig:MASE4RandomInitialization}
\end{figure}

\begin{figure}[!h]
     \centering
     \begin{subfigure}[b]{0.23\columnwidth}
         \centering
         \includegraphics[width = \textwidth]{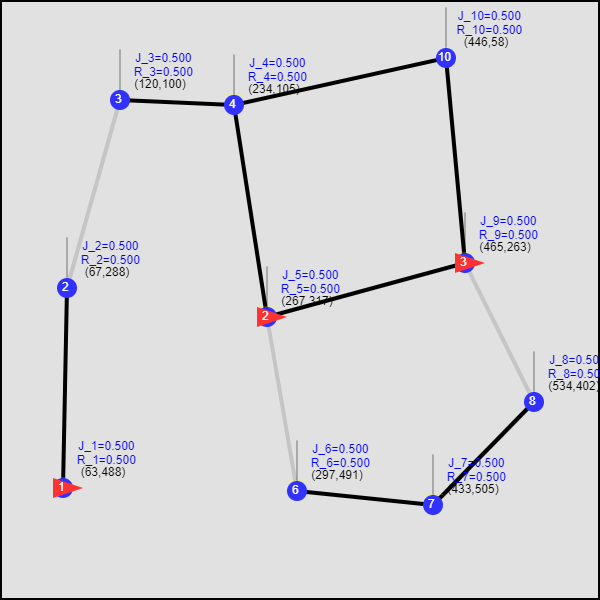}
         \caption{MASE1}
         
     \end{subfigure}
     \hfill
     \begin{subfigure}[b]{0.23\columnwidth}
         \centering
         \includegraphics[width = \textwidth]{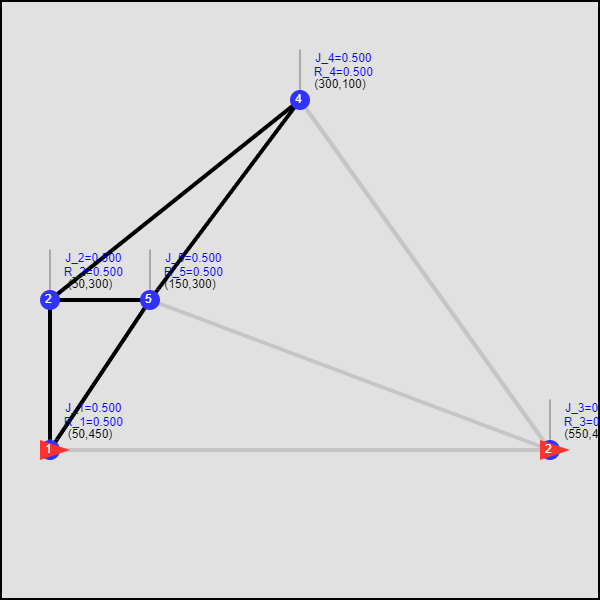}
         \caption{MASE2}
         
     \end{subfigure} 
     \hfill
     \begin{subfigure}[b]{0.23\columnwidth}
         \centering
         \includegraphics[width = \textwidth]{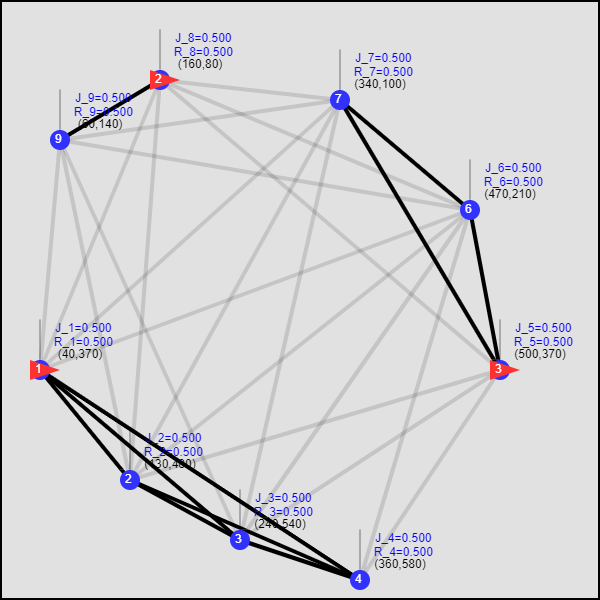}
         \caption{MASE3}
         
     \end{subfigure} 
     \hfill
     \begin{subfigure}[b]{0.23\columnwidth}
         \centering
         \includegraphics[width = \textwidth]{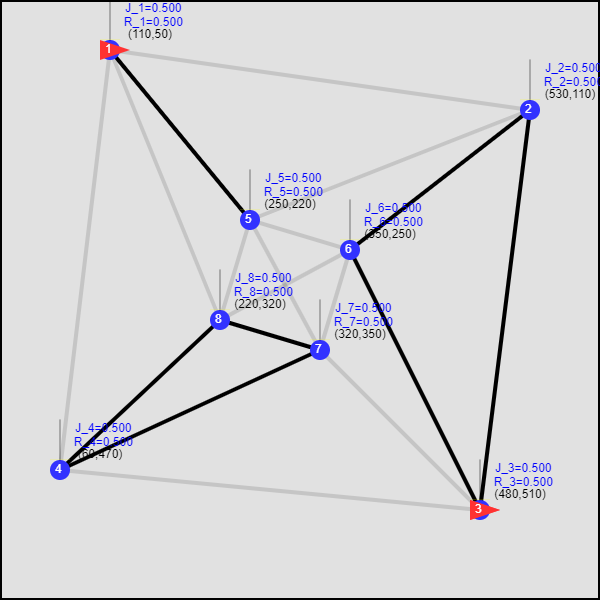}
         \caption{MASE4}
         
     \end{subfigure} 
    \caption{Clustering results obtained for the considered MASEs.}
    \label{Fig:TargetClusteringResultsWithoughtBalancing1}
\end{figure}

\begin{figure}[!h]
     \centering
     \begin{subfigure}[b]{0.23\columnwidth}
         \centering
         \includegraphics[width = \textwidth]{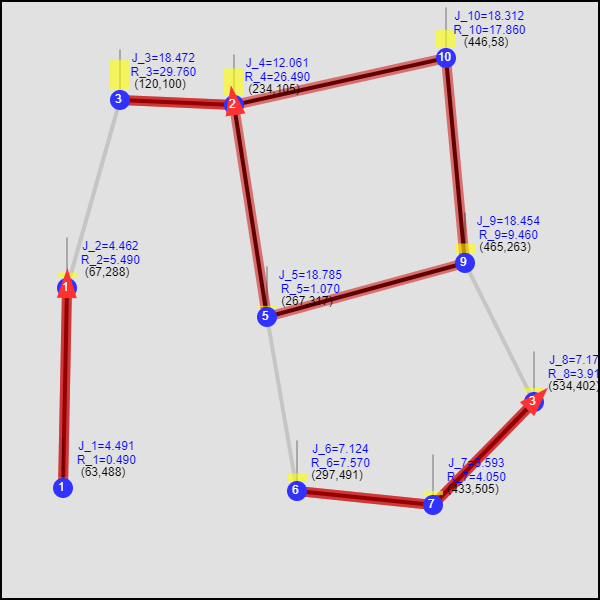}
         \caption{MASE1}
         
     \end{subfigure}
     \hfill
     \begin{subfigure}[b]{0.23\columnwidth}
         \centering
         \includegraphics[width = \textwidth]{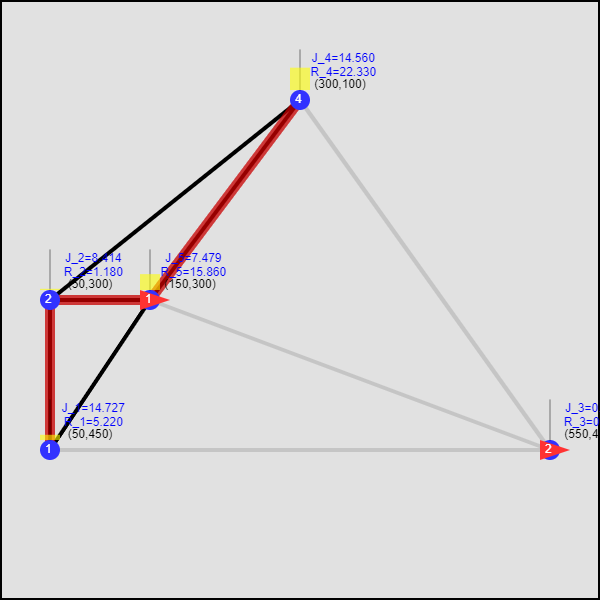}
         \caption{MASE2}
         
     \end{subfigure} 
     \hfill
     \begin{subfigure}[b]{0.23\columnwidth}
         \centering
         \includegraphics[width = \textwidth]{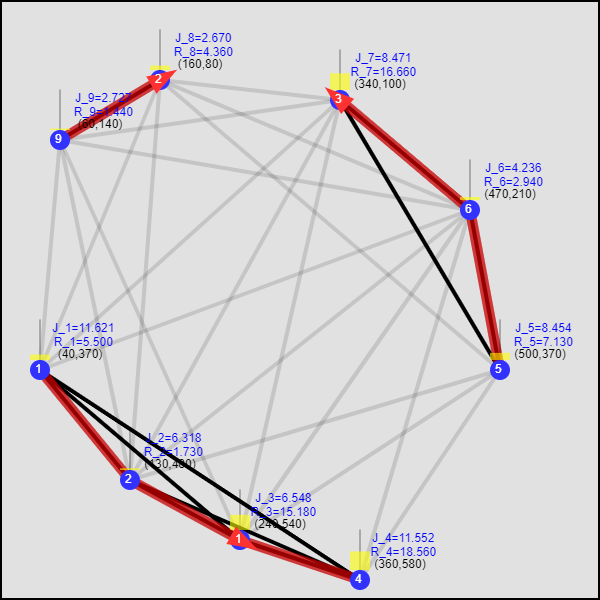}
         \caption{MASE3}
         
     \end{subfigure} 
     \hfill
     \begin{subfigure}[b]{0.23\columnwidth}
         \centering
         \includegraphics[width = \textwidth]{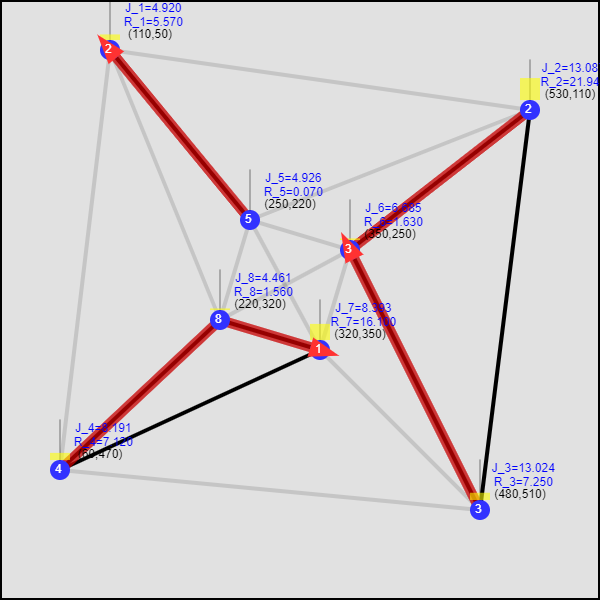}
         \caption{MASE4}
         
     \end{subfigure} 
    \caption{Target-cycles formed on each sub-graphs (red colored contours). The configurations have been drawn at $t=T$ under the optimal TCPs found using Algorithm \ref{Alg:OverallSolution} with its step 5 skipped (i.e., the graph partitioning refinement step). Respective cost (improvement) values compared to Fig. \ref{Fig:MASE1RandomInitialization},\ref{Fig:MASE2RandomInitialization},\ref{Fig:MASE3RandomInitialization} and \ref{Fig:MASE4RandomInitialization}: $112.9\ (+157.3),\ 45.2\ (+46.5),\ 62.5,\ (+211.5) $ and $63.7\ (+137.6)$ }
    \label{Fig:TargetClusteringResultsWithoughtBalancing2}
\end{figure}

\paragraph*{\textbf{Effect of Graph Partitioning Refinements}} 
As the next step, we illustrate the effect of graph partitioning refinements (i.e., the step 5 of Algorithm \ref{Alg:OverallSolution}) using the MASE1. Fig. \ref{Fig:TargetClusteringAndRefinementProcess1} shows the evolution of graph partitions (and the respective target-cycles) through two graph partitioning refinement steps (also called inter-cluster target exchange steps). However, in there, we have used an appropriately computed neighborhood width parameter value ($\sigma = 32.9$) in  \eqref{Eq:DisparityToSimilarity} to obtain the initial graph partitions shown in Fig. \ref{Fig:TargetClusteringAndRefinementProcess1}(a). 

However, Fig. \ref{Fig:TargetClusteringAndRefinementProcess2}(a) shows a situation where the initial graph partitions have been obtained using a poorly chosen $\sigma$ value ($\sigma = 7.67$). The subsequent partitioning refinement steps are shown in Fig. \ref{Fig:TargetClusteringAndRefinementProcess2}(b)$\rightarrow$(f). Notice that terminal set of graph partitions shown in Fig. \ref{Fig:TargetClusteringAndRefinementProcess1}(d) and Fig. \ref{Fig:TargetClusteringAndRefinementProcess2}(f) are identical. Therefore, it illustrates the fact that we can start with a poor set of initial graph partitions and improve upon them iteratively to get to a better set of graph partitions, using the introduced graph partitioning refinement steps (i.e., the Algorithm \ref{Alg:PostTargetClusterRefinement}).

\begin{figure}[!h]
     \centering
     \begin{subfigure}[b]{0.23\columnwidth}
         \centering
         \includegraphics[width = \textwidth]{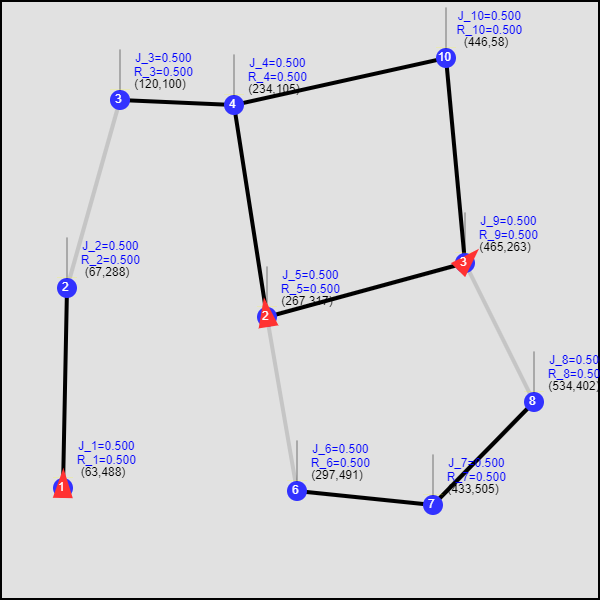}
         \caption{Clusters}
         
     \end{subfigure}
     \hfill
     \begin{subfigure}[b]{0.23\columnwidth}
         \centering
         \includegraphics[width = \textwidth]{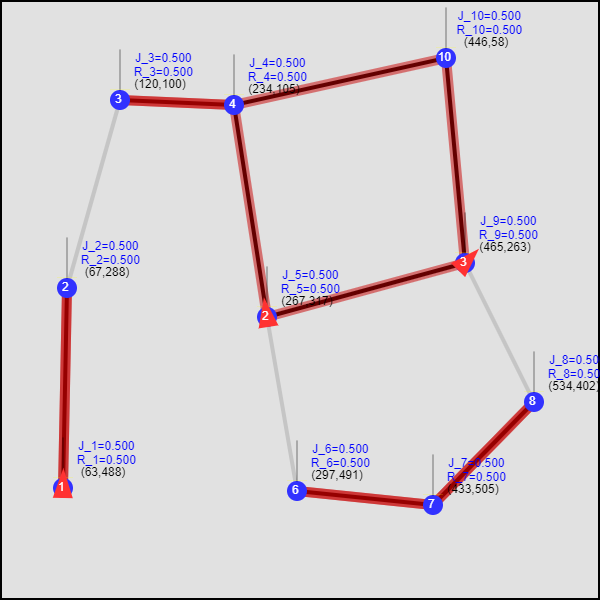}
         \caption{Cycles}
         
     \end{subfigure} 
     \hfill
     \begin{subfigure}[b]{0.23\columnwidth}
         \centering
         \includegraphics[width = \textwidth]{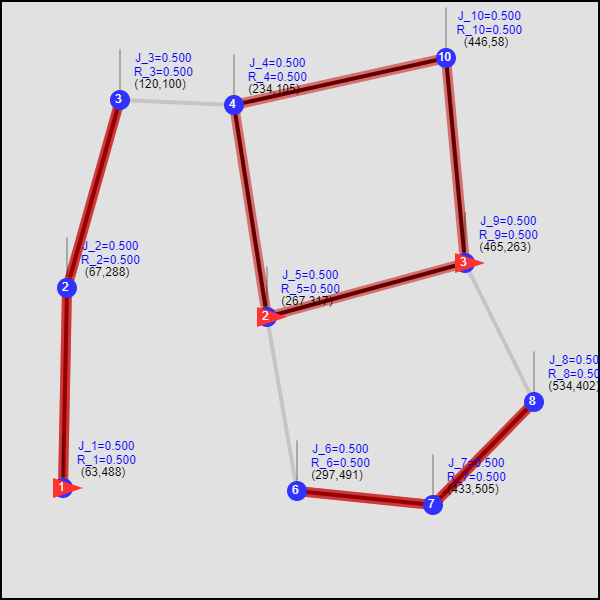}
         \caption{Exchange 1}
         
     \end{subfigure} 
     \hfill
     \begin{subfigure}[b]{0.23\columnwidth}
         \centering
         \includegraphics[width = \textwidth]{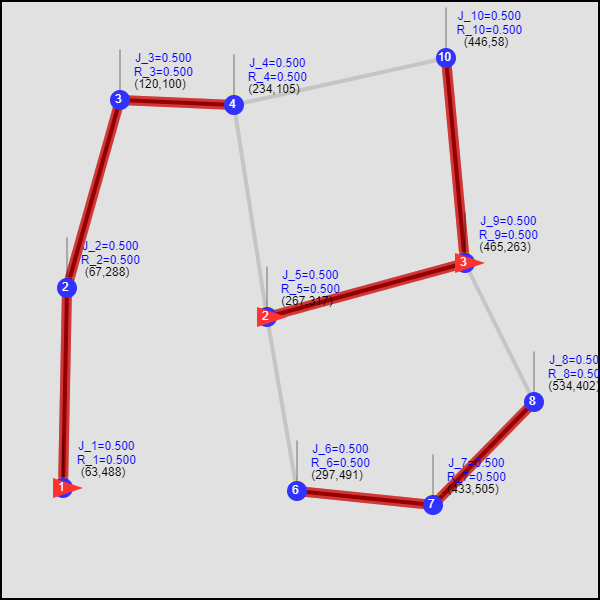}
         \caption{Exchange 2}
         
     \end{subfigure} 
    \caption{MASE1: (a) Clustering result obtained (with a properly chosen $\sigma$ value), (b) Constrained cycles generated on each sub-graph using Algorithm \ref{Alg:GreedyTargetCycleConstruction2}, (c),(d) Two inter-cluster target exchange steps, and, (d) Best target-cluster/cycle arrangement found for the MASE1.}
    \label{Fig:TargetClusteringAndRefinementProcess1}
\end{figure}

\begin{figure*}[!h]
     \centering
     \begin{subfigure}[b]{0.3\columnwidth}
         \centering
         \includegraphics[width = \textwidth]{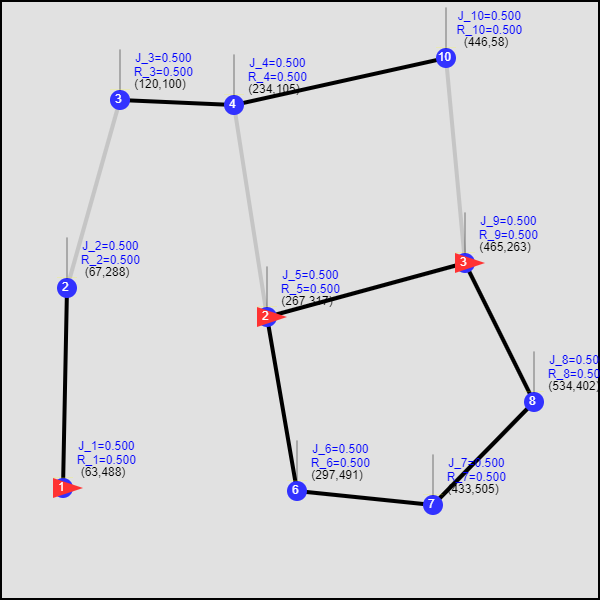}
         \caption{Clusters}
         
     \end{subfigure}
     \hfill
     \begin{subfigure}[b]{0.3\columnwidth}
         \centering
         \includegraphics[width = \textwidth]{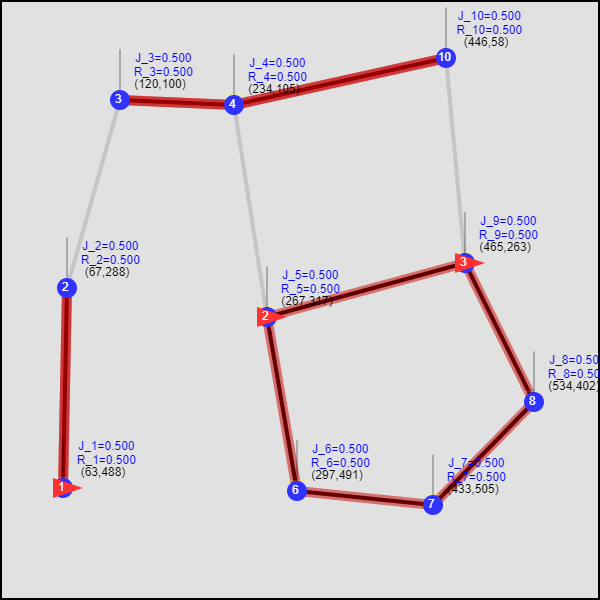}
         \caption{Cycles}
         
     \end{subfigure} 
     \hfill
     \begin{subfigure}[b]{0.3\columnwidth}
         \centering
         \includegraphics[width = \textwidth]{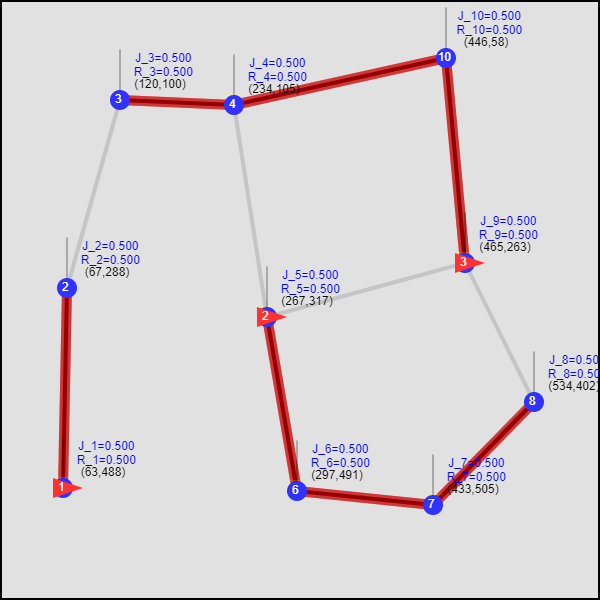}
         \caption{Exchange 1}
         
     \end{subfigure} 
     \hfill
     \begin{subfigure}[b]{0.3\columnwidth}
         \centering
         \includegraphics[width = \textwidth]{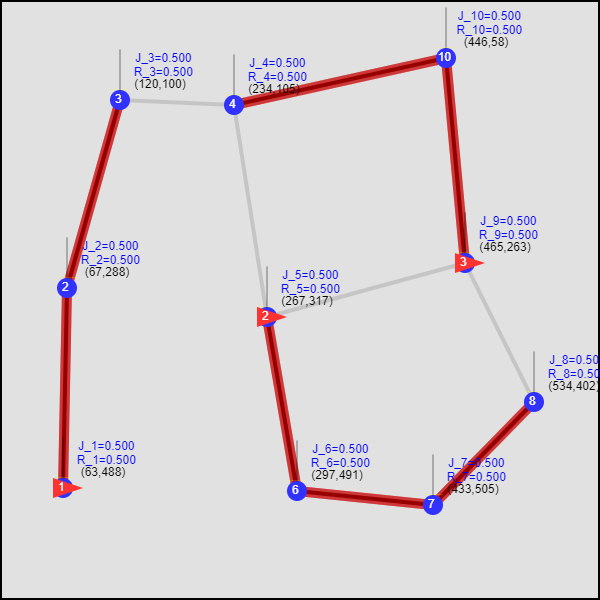}
         \caption{Exchange 2}
         
     \end{subfigure}
     \hfill
     \begin{subfigure}[b]{0.3\columnwidth}
         \centering
         \includegraphics[width = \textwidth]{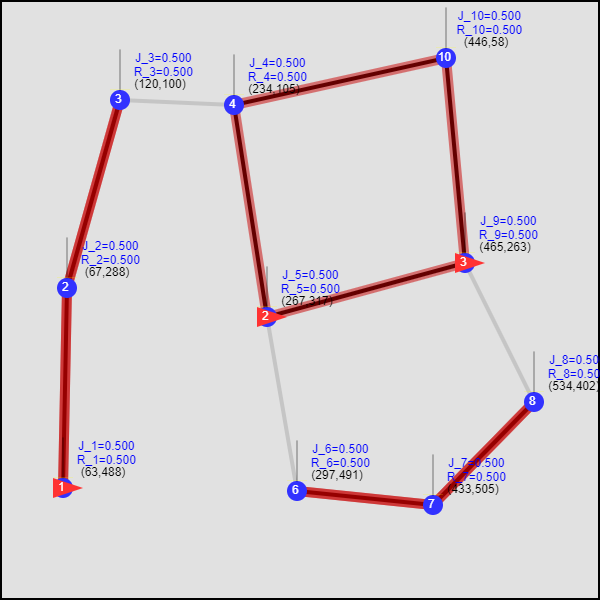}
         \caption{Exchange 3}
         
     \end{subfigure} 
     \hfill
     \begin{subfigure}[b]{0.3\columnwidth}
         \centering
         \includegraphics[width = \textwidth]{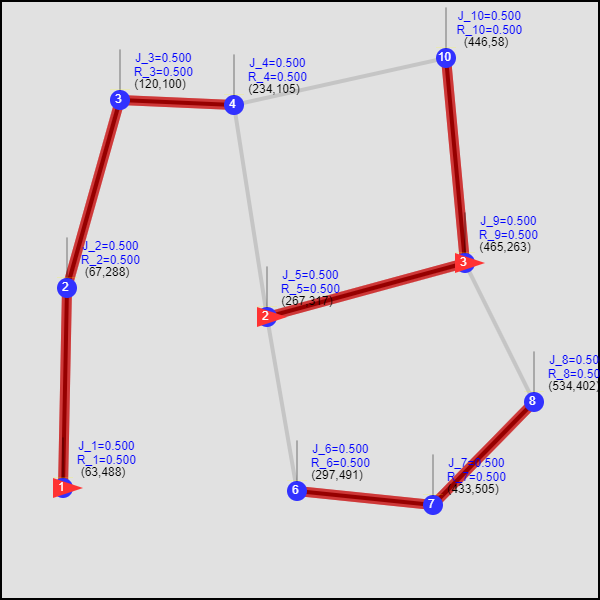}
         \caption{Exchange 4}
         
     \end{subfigure} 
    \caption{MASE1: (a) Clustering result obtained (with a poorly chosen $\sigma$ value), (b) Constrained cycles generated on each sub-graph using Algorithm \ref{Alg:GreedyTargetCycleConstruction2}, (c),(d),(e),(f) Four inter-cluster target exchange steps, and, (f) Best target-cluster/cycle arrangement found for the MASE1. (same as in Fig. \ref{Fig:TargetClusteringAndRefinementProcess1}(d)).}
    \label{Fig:TargetClusteringAndRefinementProcess2}
\end{figure*}

\paragraph*{\textbf{Effect of the Proposing Initialization Scheme (Algorithm \ref{Alg:OverallSolution})}} 
We investigate the effect of the complete proposing initialization scheme (given in Algorithm \ref{Alg:OverallSolution}) using the four multi-agent simulation examples: MASE1, MASE2, MASE3 and MASE4. Sub-figure (a) of Fig. \ref{Fig:MASE1ConstrainedCycleInitialization}, \ref{Fig:MASE2ConstrainedCycleInitialization}, 
\ref{Fig:MASE3ConstrainedCycleInitialization}, and, 
\ref{Fig:MASE4ConstrainedCycleInitialization} shows the determined graph partitions (and the derived target-cycles) of each considered problem configuration. Sub-figure (b) of each of those figures shows that the initial TCP obtained using the derived target-cycles have $J_T$ values $90.9,\ 35.1,\ 59.5$ and $59.8$ respectively. Same figures show that these $J_T$ values cannot be further improved using the gradient steps \eqref{Eq:GradientDescent}. To ensure these initial TCPs are locally optimal, in each case, at $l=100$, the derived initial TCP (i.e., $\Theta^{(0)}$) was randomly perturbed. The subsequent observations shows that $\Theta^{(l)}$ converges back to the same initial TCP found in each case. 

It is important to note that these solutions are much better than the best TCPs obtained with  random initialization of $\Theta^{(0)}$ (shown in Fig. 
\ref{Fig:MASE1RandomInitialization}, \ref{Fig:MASE2RandomInitialization}, \ref{Fig:MASE3RandomInitialization}, and \ref{Fig:MASE4RandomInitialization}). The improvement margins of the four considered examples are by: $+179.3\ (66.3\%),\ +56.6\ (61.7\%),\ +214.5\ (78.2\%),$ and $+141.5\ (70.3\%)$. All the discussed simulation results have been summarized in the Table \ref{Tab:SummaryOfTheResults}.

Also, note that, when compared to the performance of the target-cycles shown in Fig.  \ref{Fig:TargetClusteringResultsWithoughtBalancing2} (where initial graph partitions were not further refined), the improvement margins of the four considered examples (of the respective complete solutions) are by: $+22.0\ (19.4\%),\ +10.1\ (22.3\%),\ +3\ (0.48\%),$ and $+3.9\ (6.12\%)$. These achieved improvement levels emphasize the importance of the proposed graph partition refinement step. 

\begin{table}[]
\centering
\caption{Summary of the results obtained for the considered simulation examples}
\label{Tab:SummaryOfTheResults}
\resizebox{\columnwidth}{!}{%
\begin{tabular}{|l|l|l|l|l|l|l|l|}
\hline
\multicolumn{1}{|c|}{\multirow{2}{*}{
\begin{tabular}[c]{@{}l@{}} Cost of the optimal TCP $\Theta^*$ \\ (found using \eqref{Eq:GradientDescent}): $J_T(\Theta^{*})$ \end{tabular}}} & \multicolumn{3}{c|}{\begin{tabular}[c]{@{}c@{}}Single Agent \\ Simulation Examples\end{tabular}} & \multicolumn{4}{c|}{\begin{tabular}[c]{@{}c@{}}Multi-Agent \\ Simulation Examples\end{tabular}} \\ \cline{2-8} 
\multicolumn{1}{|c|}{} & \multicolumn{1}{c|}{1} & \multicolumn{1}{c|}{2} & \multicolumn{1}{c|}{3} & \multicolumn{1}{c|}{1} & \multicolumn{1}{c|}{2} & \multicolumn{1}{c|}{3} & \multicolumn{1}{c|}{4} \\ \hline
\begin{tabular}[c]{@{}l@{}}With randomly generated  \\ initial TCP $\Theta^{(0)}$\end{tabular} & 129.2 & 651.3 & 497.9 & 270.2  & 91.7 & 274.0 & 201.3 \\ \hline
\begin{tabular}[c]{@{}l@{}}With initial TCP $\Theta^{(0)}$ given \\ by the proposing Algorithm \ref{Alg:OverallSolution}\end{tabular} & 114.6 & 567.0 & 449.5 & 90.9 & 35.1 & 59.5 & 59.8 \\ \hline
\begin{tabular}[c]{@{}l@{}}Percentage improvement (\%) \end{tabular} & 10.7 & 12.9 & 9.7 & 66.3 & 61.7 & 78.2 & 70.3 \\ \hline
\end{tabular}%
}
\end{table}

\begin{figure}[!h]
     \centering
     \begin{subfigure}[b]{0.35\columnwidth}
         \centering
         \includegraphics[width = \textwidth]{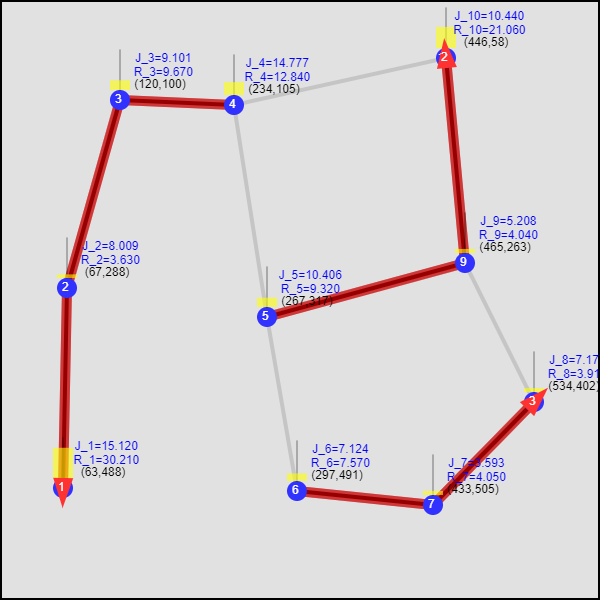}
         \caption{Config. at $t=T$.}
         
     \end{subfigure}
     \hfill
     \begin{subfigure}[b]{0.6\columnwidth}
         \centering
         \includegraphics[width = \textwidth]{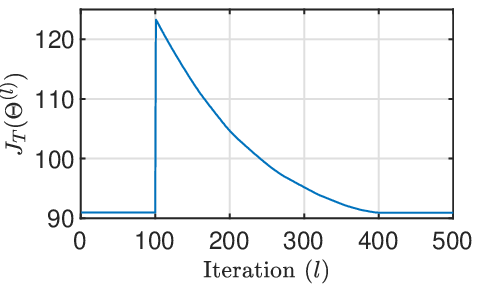}
         \caption{Cost vs iterations plot.}
         
     \end{subfigure}
    \caption{MASE1: The TCP $\Theta^{(0)}$ given by the identified cycles (the red traces in (a)) shows local optimality. At $l=100, \Theta^{(l)}$ is randomly perturbed. Yet, converges back to the initial TCP. Cost $J_T=90.9$ (Improvement $=+179.3$ compared to Fig. \ref{Fig:MASE1RandomInitialization}).}
    \label{Fig:MASE1ConstrainedCycleInitialization}
\end{figure}

\begin{figure}[!h]
     \centering
     \begin{subfigure}[b]{0.35\columnwidth}
         \centering
         \includegraphics[width = \textwidth]{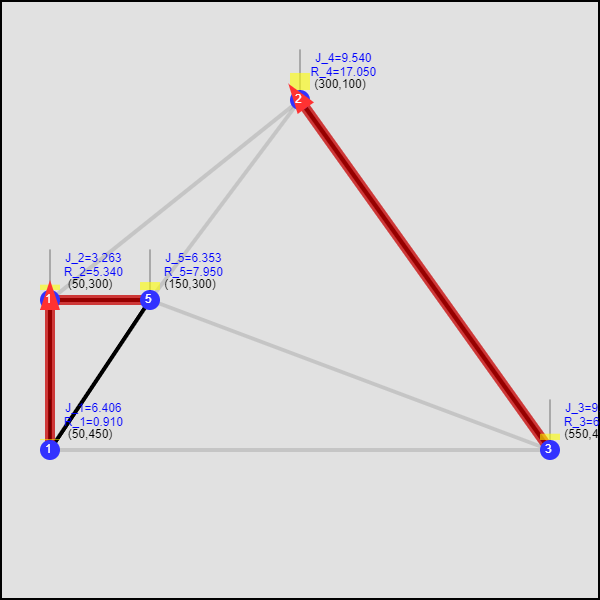}
         \caption{Config. at $t=T$.}
         
     \end{subfigure}
     \hfill
     \begin{subfigure}[b]{0.6\columnwidth}
         \centering
         \includegraphics[width = \textwidth]{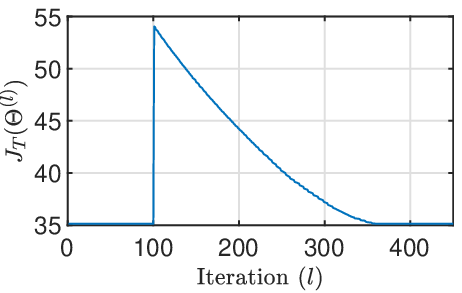}
         \caption{Cost vs iterations plot.}
         
     \end{subfigure}
    \caption{MASE2: The TCP $\Theta^{(0)}$ given by the identified cycles (the red traces in (a)) shows local optimality. At $l=100, \Theta^{(l)}$ is randomly perturbed. Yet, converges back to the initial TCP. Cost $J_T=35.1$ (Improvement $=+56.6$ compared to Fig. \ref{Fig:MASE2RandomInitialization}).}
    \label{Fig:MASE2ConstrainedCycleInitialization}
\end{figure}

\begin{figure}[!h]
     \centering
     \begin{subfigure}[b]{0.35\columnwidth}
         \centering
         \includegraphics[width = \textwidth]{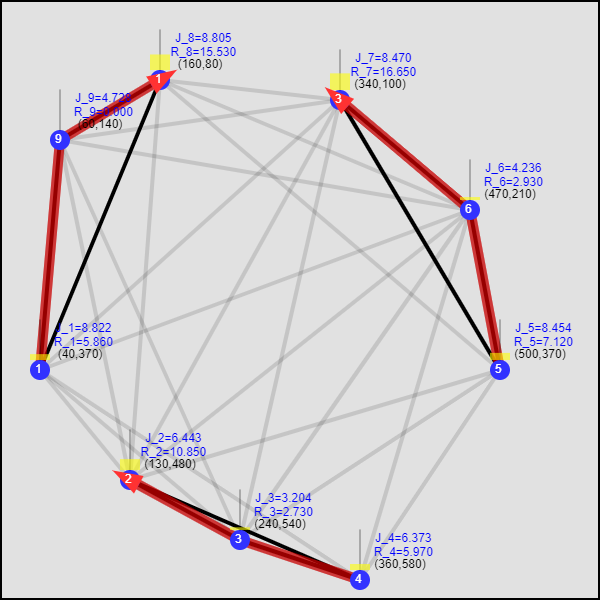}
         \caption{Config. at $t=T$.}
         
     \end{subfigure}
     \hfill
     \begin{subfigure}[b]{0.6\columnwidth}
         \centering
         \includegraphics[width = \textwidth]{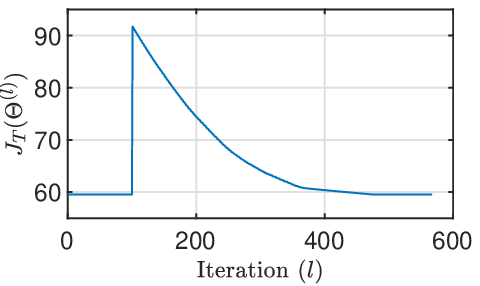}
         \caption{Cost vs iterations plot.}
         
     \end{subfigure}
    \caption{MASE3: The TCP $\Theta^{(0)}$ given by the identified cycles (the red traces in (a)) shows local optimality. At $l=100, \Theta^{(l)}$ is randomly perturbed. Yet, converges back to the initial TCP. Cost $J_T=59.5$ (Improvement $=+214.5$ compared to Fig. \ref{Fig:MASE3RandomInitialization}).}
    \label{Fig:MASE3ConstrainedCycleInitialization}
\end{figure}


\begin{figure}[t]
     \centering
     \begin{subfigure}[b]{0.35\columnwidth}
         \centering
         \includegraphics[width = \textwidth]{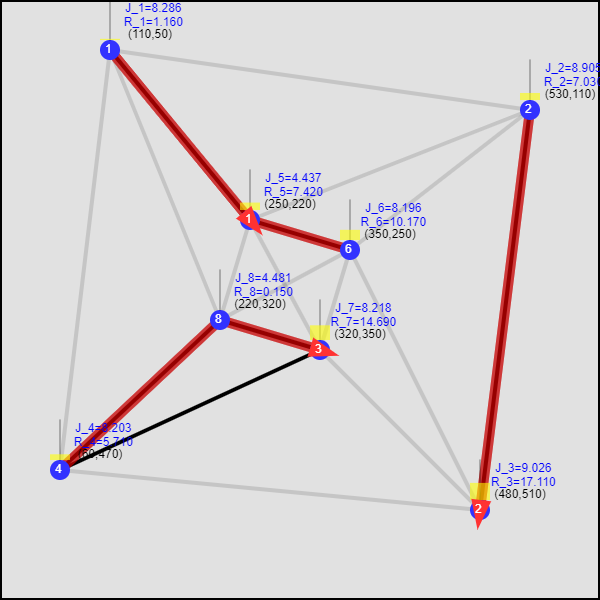}
         \caption{Config. at $t=T$.}
         
     \end{subfigure}
     \hfill
     \begin{subfigure}[b]{0.6\columnwidth}
         \centering
         \includegraphics[width = \textwidth]{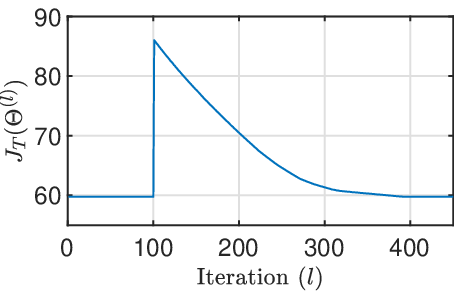}
         \caption{Cost vs iterations plot.}
         
     \end{subfigure}
    \caption{MASE4:The TCP $\Theta^{(0)}$ given by the identified cycles (the red traces in (a)) shows local optimality. At $l=100, \Theta^{(l)}$ is randomly perturbed. Yet, converges back to the initial TCP. Cost $J_T=59.8$ (Improvement $=+141.5$ compared to Fig. \ref{Fig:MASE4RandomInitialization}).}
    \label{Fig:MASE4ConstrainedCycleInitialization}
\end{figure}

As the last step, we consider a new set of multi-agent simulation examples labeled MASE5 - MASE12, as shown in Fig. \ref{Fig:MASE5_12RandomInitialization}. In each case $N=3$ and $M=15$ have been used. Further, each underlying target topology has been generated randomly - as a random geometric graph \cite{Dall2002} with communication range parameter $r=200$. When the proposing greedy initialization technique is deployed, the obtained new terminal solutions are shown in Fig. \ref{Fig:MASE5_12GreedyInitialization}. Across all these eight cases, the average improvement achieved is $+315.7\ (69.1\%)$.

In simulation examples MASE5 - MASE12, on an Intel\textregistered\ Core\texttrademark\ i7-7800 CPU \@ 3.20 GHz Processor with a 32 GB RAM, the average execution time observed for the proposing greedy initialization technique (i.e., to Algorithm \ref{Alg:OverallSolution} to generate $\Theta^{(0)}$) was $13.7 s$. Also, all such generated initial TCPs were found to be locally optimal (Similar to what we saw in MASE1-MASE4). However, when initialized with a randomly chosen TCP $\Theta^{(0)}$, the average execution time observed for the subsequent convergence of the gradient steps in \eqref{Eq:GradientDescent} was $245.8 s$. Therefore, the execution time taken for the proposing offline greedy initialization process is much smaller, and, at the same time, it is highly effective.

Finally, we recall that all the simulation examples discussed were evaluated on the developed JavaScript based simulator, which is made available at \href{http://www.bu.edu/codes/simulations/shiran27/PersistentMonitoring/}{\seqsplit{http://www.bu.edu/codes/simulations/shiran27/ PersistentMonitoring/}}. Readers are invited to reproduce the reported results and also to try new problem configurations with customized problem parameters, using the developed interactive simulator. 

\begin{figure}[!h]
     \centering
     \begin{subfigure}[b]{0.23\columnwidth}
         \centering
         \includegraphics[width = \textwidth]{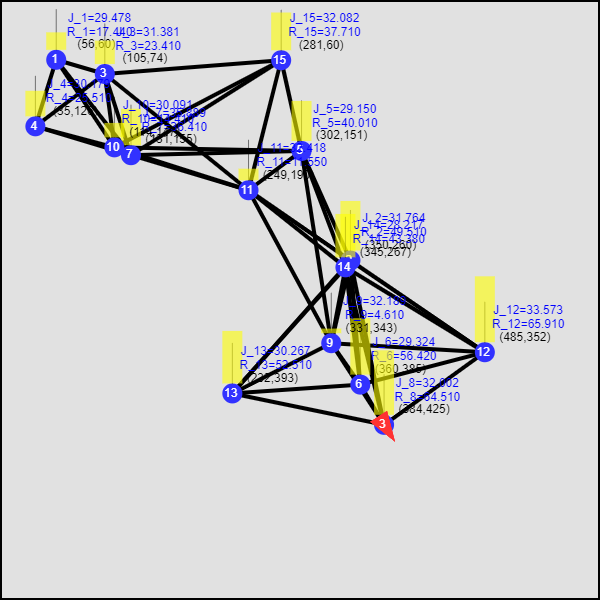}
         \caption{$J_T = 468.0$}
         
     \end{subfigure}
     \hfill
     \begin{subfigure}[b]{0.23\columnwidth}
         \centering
         \includegraphics[width = \textwidth]{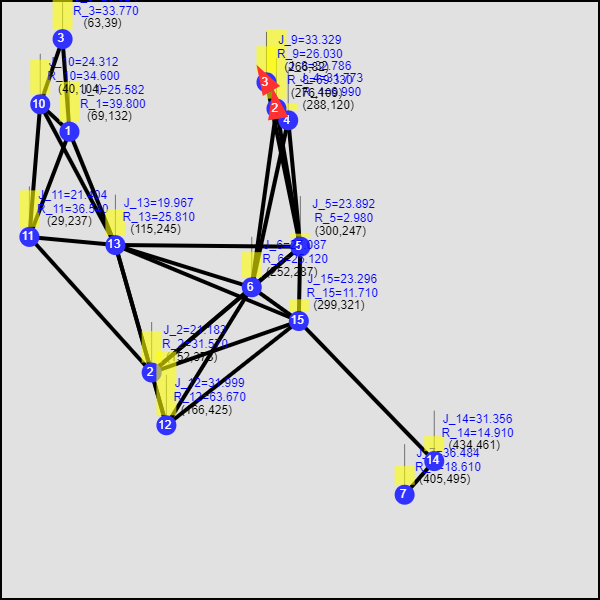}
         \caption{$J_T = 411.0$}
         
     \end{subfigure} 
     \hfill
     \begin{subfigure}[b]{0.23\columnwidth}
         \centering
         \includegraphics[width = \textwidth]{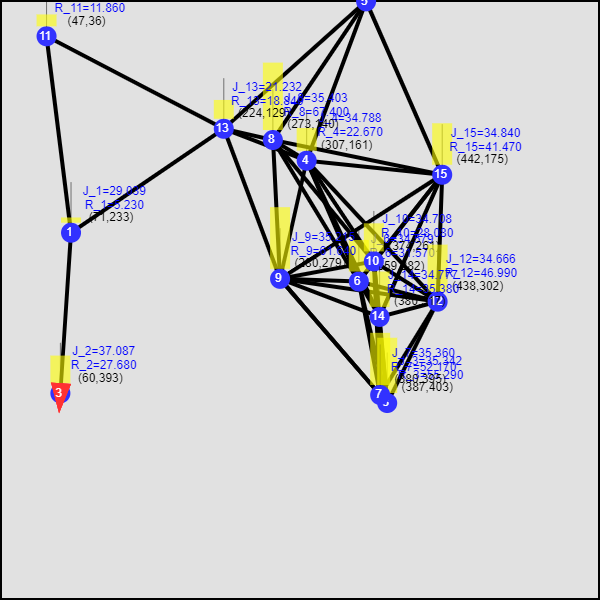}
         \caption{$J_T = 509.6$}
         
     \end{subfigure} 
     \hfill
     \begin{subfigure}[b]{0.23\columnwidth}
         \centering
         \includegraphics[width = \textwidth]{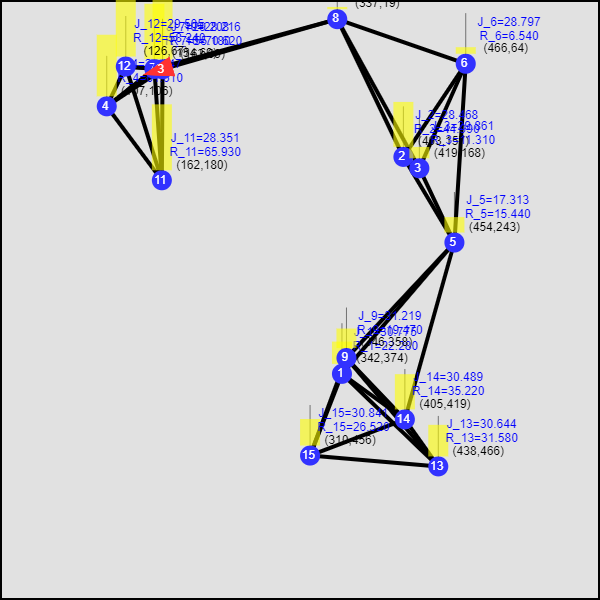}
         \caption{$J_T = 419.3$}
         
     \end{subfigure}
     \hfill
     \begin{subfigure}[b]{0.23\columnwidth}
         \centering
         \includegraphics[width = \textwidth]{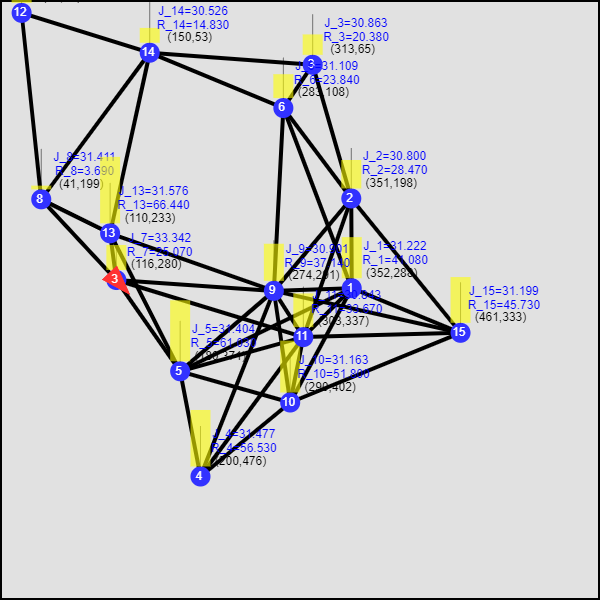}
         \caption{$J_T = 469.1$}
         
     \end{subfigure} 
     \hfill
     \begin{subfigure}[b]{0.23\columnwidth}
         \centering
         \includegraphics[width = \textwidth]{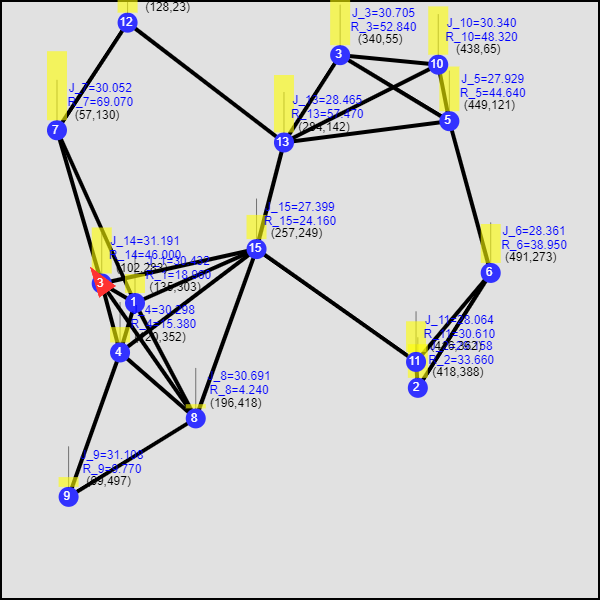}
         \caption{$J_T = 444.1$}
         
     \end{subfigure} 
     \hfill
     \begin{subfigure}[b]{0.23\columnwidth}
         \centering
         \includegraphics[width = \textwidth]{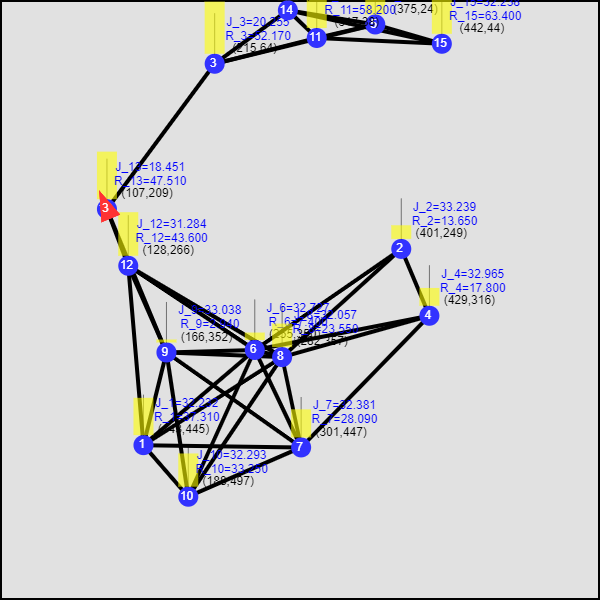}
         \caption{$J_T = 458.7$}
         
     \end{subfigure} 
     \hfill
     \begin{subfigure}[b]{0.23\columnwidth}
         \centering
         \includegraphics[width = \textwidth]{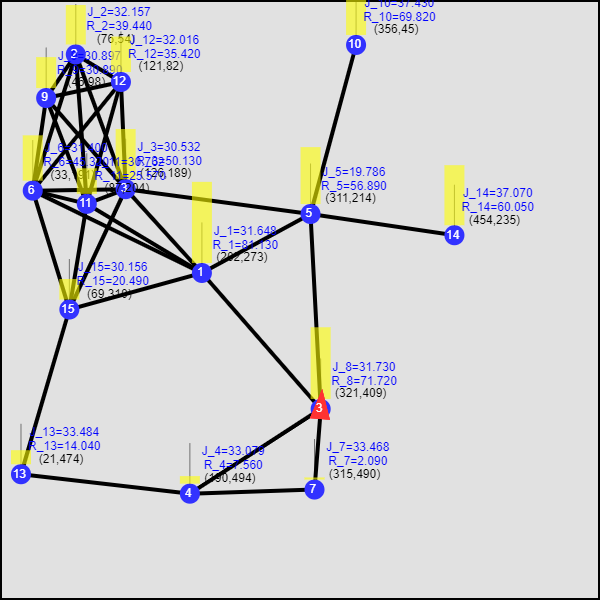}
         \caption{$J_T = 475.6$}
         
     \end{subfigure} 
    \caption{Problem configurations MASE5 - MASE12 and their respective cost values under the optimal TCP $\Theta^*$ found using \eqref{Eq:GradientDescent} when started with a randomly chosen $\Theta^{(0)}$.}
    \label{Fig:MASE5_12RandomInitialization}
\end{figure}

\begin{figure}[!h]
     \centering
     \begin{subfigure}[b]{0.23\columnwidth}
         \centering
         \includegraphics[width = \textwidth]{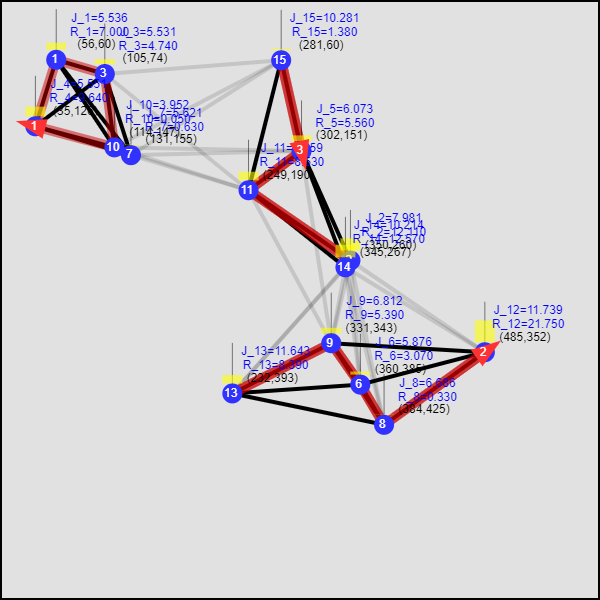}
         \caption{$J_T = 108.7$}
         
     \end{subfigure}
     \hfill
     \begin{subfigure}[b]{0.23\columnwidth}
         \centering
         \includegraphics[width = \textwidth]{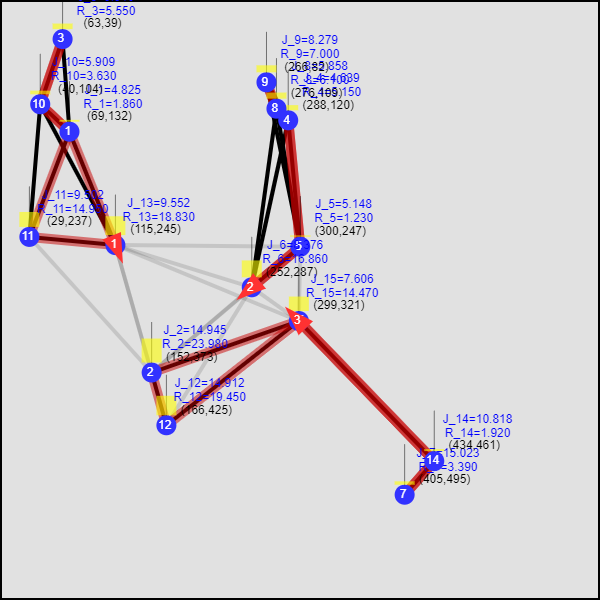}
         \caption{$J_T = 134.9$}
         
     \end{subfigure} 
     \hfill
     \begin{subfigure}[b]{0.23\columnwidth}
         \centering
         \includegraphics[width = \textwidth]{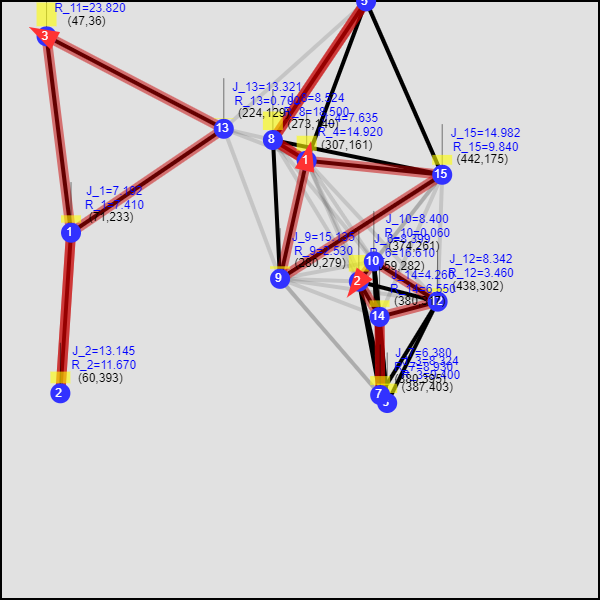}
         \caption{$J_T = 152.6$}
         
     \end{subfigure} 
     \hfill
     \begin{subfigure}[b]{0.23\columnwidth}
         \centering
         \includegraphics[width = \textwidth]{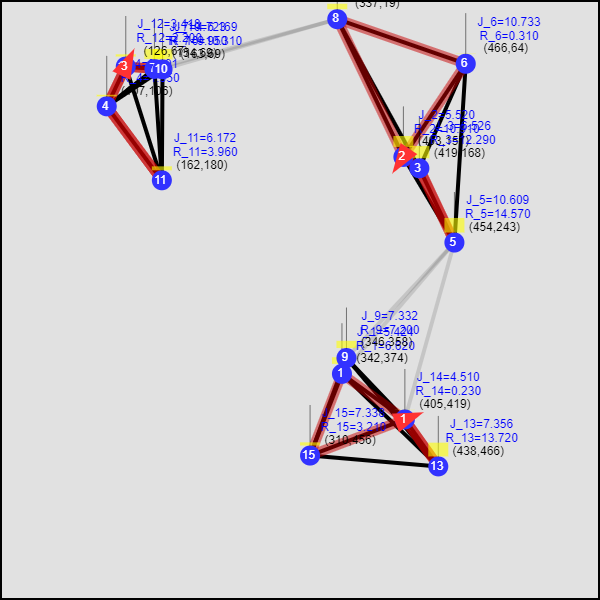}
         \caption{$J_T = 99.6$}
         
     \end{subfigure}
     \hfill
     \begin{subfigure}[b]{0.23\columnwidth}
         \centering
         \includegraphics[width = \textwidth]{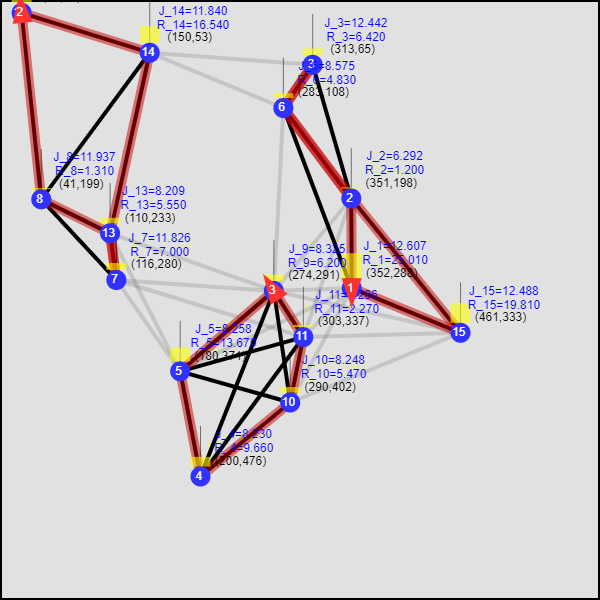}
         \caption{$J_T = 149.5$}
         
     \end{subfigure} 
     \hfill
     \begin{subfigure}[b]{0.23\columnwidth}
         \centering
         \includegraphics[width = \textwidth]{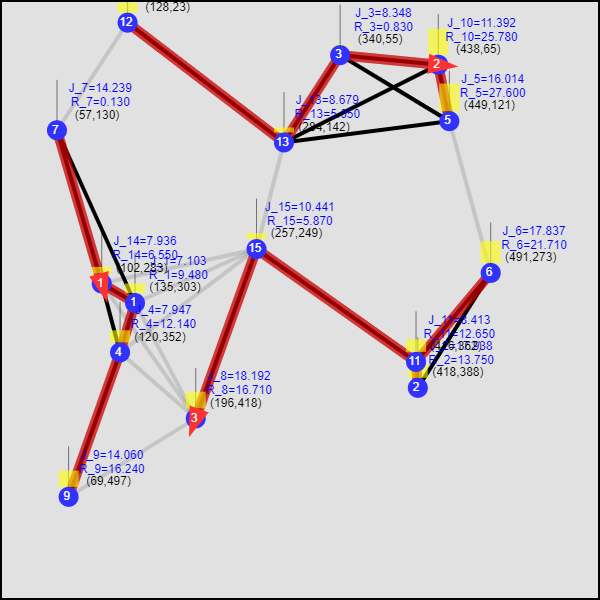}
         \caption{$J_T = 184.4$}
         
     \end{subfigure} 
     \hfill
     \begin{subfigure}[b]{0.23\columnwidth}
         \centering
         \includegraphics[width = \textwidth]{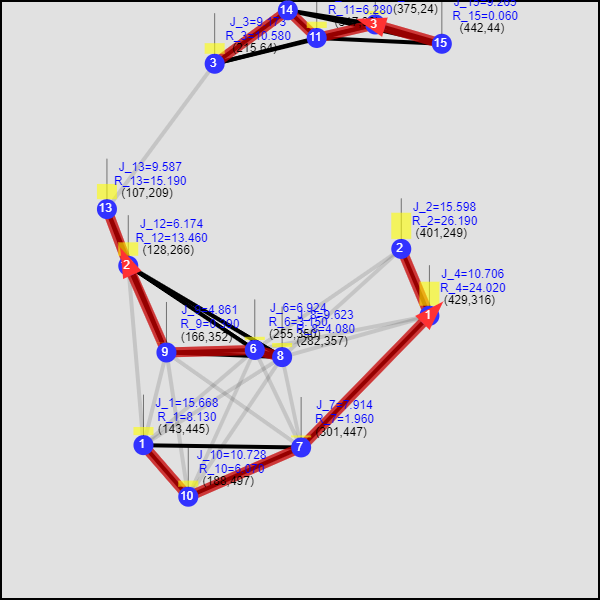}
         \caption{$J_T = 131.9$}
         
     \end{subfigure} 
     \hfill
     \begin{subfigure}[b]{0.23\columnwidth}
         \centering
         \includegraphics[width = \textwidth]{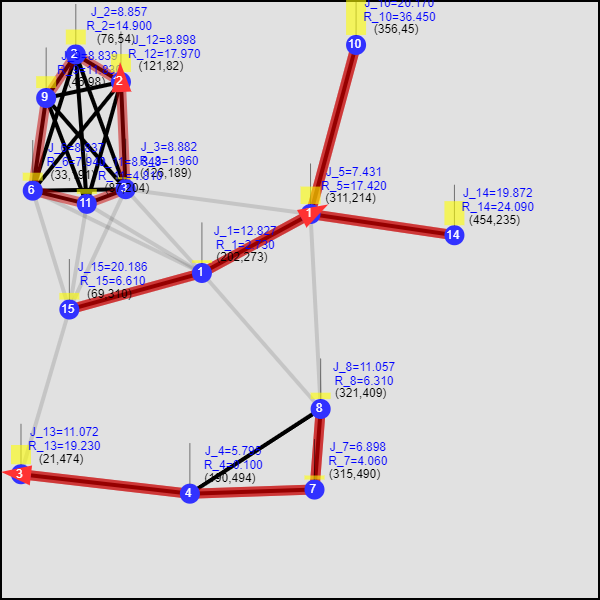}
         \caption{$J_T = 168.5$}
         
     \end{subfigure} 
    \caption{Problem configurations MASE5 - MASE12 and their respective cost values under the optimal TCP $\Theta^*$ found using \eqref{Eq:GradientDescent} when started with $\Theta^{(0)}$ given by proposing Algorithm \ref{Alg:OverallSolution}.}
    \label{Fig:MASE5_12GreedyInitialization}
\end{figure}

\section{Conclusion}\label{Sec:Conclusion}

We have considered the optimal multi-agent persistent monitoring problem on a set of targets interconnected according to a fixed graph topology. We have adopted a class of distributed threshold-based parametric controllers where IPA can be used to determine optimal threshold parameters in an on-line manner using gradient descent. Due to the non-convex nature of the problem, optimal thresholds given by gradient descent highly depend on the used initial thresholds. To address this issue, the asymptotic behavior of the persistent monitoring system was studied, which leads to a computationally efficient and effective threshold initialization scheme. Future work is directed at extending the proposed solution to PMN problems with variable travel times given by higher-order agent dynamic models.

\ifCLASSOPTIONcaptionsoff
  \newpage
\fi



\bibliographystyle{IEEEtran}
\bibliography{References}
\end{document}